\theoremstyle{plain}
\newtheorem{theorem}{Theorem}[section]
\newtheorem{lemma}[theorem]{Lemma}
\newtheorem{corollary}[theorem]{Corollary}
\newtheorem{proposition}[theorem]{Proposition}
\newtheorem{observation}[theorem]{Observation}
\newtheorem{example}[theorem]{Example}
\theoremstyle{definition}
\newcommand{\sdim}{\textnormal{sdim}}
\newcommand{\diam}{\textnormal{diam}}
\newcommand{\code}{\textnormal{code}}
\newcommand{\ter}{\textnormal{ter}}
\def\finf{\mathop{{\rm I}\kern -.27 em {\rm F}}\nolimits}
\def\SR{\rm SR}
\newcommand{\rmv}[1]{}
\begin{document}

\title{Maker-Breaker Strong Resolving Game}

\author{{\bf{Cong X. Kang}}$^1$, {\bf{Aleksander Kelenc}}$^2$ and {\bf{Eunjeong Yi}}$^3$\\
{\small Texas A\&M University at Galveston, Galveston, TX 77553, USA}$^{1,3}$\\
{\small University of Maribor, FEECS Koro\v{s}ka cesta 46, 2000 Maribor, Slovenia}$^2$\\
{\small\em kangc@tamug.edu}$^1$; {\small\em aleksander.kelenc@um.si}$^2$; {\small\em yie@tamug.edu}$^3$}

\maketitle

\date{}

\begin{abstract}
Let $G$ be a graph with vertex set $V$. A set $S \subseteq V$ is a \emph{strong resolving set} of $G$ if, for distinct $x,y\in V$, there exists $z\in S$ such that either $x$ lies on a $y-z$ geodesic or $y$ lies on an $x-z$ geodesic in $G$. In this paper, we study maker-breaker strong resolving game (MBSRG) played on a graph by two players, Maker and Breaker, where the two players alternately select a vertex of $G$ not yet chosen. Maker wins if he is able to choose vertices that form a strong resolving set of $G$ and Breaker wins if she is able to prevent Maker from winning in the course of MBSRG. We denote by $O_{\rm SR}(G)$ the outcome of MBSRG played on $G$. We obtain some general results on MBSRG and examine the relation between $O_{\rm SR}(G)$ and $O_{\rm R}(G)$, where $O_{\rm R}(G)$ denotes the outcome of the maker-breaker resolving game of $G$. We determine the outcome of MBSRG played on some graph classes, including corona product graphs, Cartesian product graphs, and modular product graphs.
\end{abstract}

\noindent\small {\bf{Keywords:}} resolving set, strong resolving set, Maker-Breaker strong resolving game, mutually maximally distant vertices, vertex cover, corona product, Cartesian product, modular product\\

\noindent \small {\bf{2010 Mathematics Subject Classification:}} 05C12, 05C57, 05C70, 05C76\\


\section{Introduction}\label{sec_intro}

Erd\"{o}s and Selfridge~\cite{erdos} introduced the Maker-Breaker game played on an arbitrary hypergraph $H=(V,E)$ by two players, Maker and Breaker, who alternately select a vertex from $V$ not yet chosen in the course of the game. Maker wins the game if he can select all vertices of a hyperedge from $E$ and Breaker wins otherwise. For further reading on the topic, we refer to ~\cite{beck, hefetz}.
Maker-Breaker metric resolving game (MBRG) and Maker-Breaker distance-$k$ metric resolving games (MB$k$RG), respectively, played on a graph was introduced and studied in~\cite{mbrg} and~\cite{mbkrg}. MB$k$RG is a generalization of MBRG, where MB$k$RG equals MBRG when $k+1$ is at least the diameter of the graph being considered. For the fractionalization of MBRG, we refer to~\cite{FMBRG}. Other interesting two-player games played on a graph include cop and robber game~\cite{cop} and Maker-Breaker domination game~\cite{mbdg}, for examples. In this paper, we introduce and study Maker-Breaker strong resolving game (MBSRG) played on a graph, where MBSRG is a variant of MBRG.

We recall the notion of a (strong) resolving set and the (strong) metric dimension of a graph. Let $G$ be a finite, simple, undirected, and connected graph with vertex set $V(G)$ and edge set $E(G)$. For $x,y\in V(G)$, let $d_G(x, y)$ denote the minimum number of edges on a path connecting $x$ and $y$ in $G$; we drop the subscript if it is clear in context. A set $R \subseteq V(G)$ is a \emph{resolving set} of $G$ if, for distinct $x,y\in V(G)$, there exists a vertex $z\in R$ such that $d(x,z) \neq d(y,z)$. A set $S \subseteq V(G)$ is a \emph{strong resolving set} of $G$ if, for distinct $x,y\in V(G)$, there exists $z\in S$ such that either $x$ lies on a $y-z$ geodesic or $y$ lies on an $x-z$ geodesic in $G$. The \emph{metric dimension}, $\dim(G)$, of $G$ is the minimum cardinality among all resolving sets of $G$, and the \emph{strong metric dimension}, $\sdim(G)$, of $G$ is the minimum cardinality among all strong resolving sets of $G$. For an ordered set $S=\{u_1, u_2, \ldots, u_{k}\}$ and a vertex $v\in V(G)$, the metric code of $v$ with respect to $S$, denoted by $\code_S(v)$, is the $k$-vector $(d(v, u_1), d(v, u_2), \ldots, d(v, u_{k}))$. Seb\"{o} and Tannier~\cite{sebo} observed that, if $S$ is a strong resolving set, then the vectors $\{\code_S(v): v\in V(G)\}$ uniquely determine the graph $G$ (see~\cite{sdim_frac} for a proof for the claim) while providing two non-isomorphic graphs $H_1$ and $H_2$ with $V(H_1)=V(H_2)$ satisfying $\{\code_R(v): v\in V(H_1)\}=\{\code_R(v): v\in V(H_2)\}$ if $R$ is a resolving set for both $H_1$ and $H_2$. Harary and Melter~\cite{harary} and Slater~\cite{slater} independently introduced the concept of metric dimension, and Seb\"{o} and Tannier~\cite{sebo} introduced strong metric dimension.  Applications of metric dimension include robot navigation~\cite{landmarks}, network discovery and verification~\cite{network}, sonar~\cite{slater} and combinatorial optimization~\cite{sebo}, to name a few. It is known that determining the metric dimension and the strong metric dimension, respectively, of a general graph is an NP-hard problem; see~\cite{NP,landmarks} and~\cite{MMD}, respectively.

We recall one of the most important results regarding the strong metric dimension of a graph. For $v\in V(G)$, the \emph{open neighborhood} of $v$ is $N(v)=\{u \in V(G) : uv \in E(G)\}$. A vertex $u\in V(G)$ is \emph{maximally distant} from $v \in V(G)$ if $d(u,v) \ge d(w,v)$ for every $w\in N(u)$. If $u$ is maximally distant from $v$ and $v$ is maximally distant from $u$ in $G$, then we say that $u$ and $v$ are \emph{mutually maximally distant} in $G$ and we write $u\!$ MMD $\!v$. It was shown in~\cite{MMD} that if $x$ MMD $y$ in $G$, then $S \cap \{x,y\} \neq \emptyset$ for any strong resolving set $S$ of $G$. Based on~\cite{MMD} and~\cite{GSR}, the \emph{strong resolving graph} of $G$, denoted by $G_{\rm SR}$, has vertex set $V(G_{\rm SR})=\{x\in V(G): \exists y\in V(G) \mbox{ with } x \mbox{ MMD }y\}$ and $uv\in E(G_{\rm SR})$ if and only if $u$ MMD $v$ in $G$. A set $S \subseteq V(G)$ is a \emph{vertex cover} of $G$ if every edge of $G$ is incident with at least one vertex of $S$, and the \emph{vertex cover number}, $\tau(G)$, of $G$ the minimum cardinality among all vertex covers of $G$. It was shown in~\cite{MMD} that $\sdim(G)=\tau(G_{\rm SR})$.

Now, following~\cite{mbrg}, MBRG is played on a graph $G$ by two players, Maker and Breaker. Maker and Breaker alternately select (without missing their turn) a vertex of $G$ not yet chosen in the course of the game. M-game (B-game, respectively) denotes the game for which Maker (Breaker, respectively) plays first. Maker wins MBRG if he is able to select vertices that form a resolving set of $G$ in the course of the game; Breaker wins if Breaker can prevent Maker from winning the game. Replacing a resolving set by a strong resolving set, MBSRG is analogously defined in this paper. We denote by $O_{\rm R}(G)$ and $O_{\rm SR}(G)$, respectively, the outcome of MBRG and MBSRG played on $G$.

In a network $Z$ with rivals $M$ and $B$, suppose $M$ tries to install transmitters/sensors at some nodes in $Z$ that form a strong resolving set (enabling $M$ to map out the entire network $Z$) while $B$ tries to prevent $M$ from having a control over $Z$. Noting that there's no advantage for the second player in MBRG, it was observed in~\cite{mbrg} that there are three realizable outcomes for $O_{\rm R}(G)$. Analogously, there are three realizable outcomes for $O_{\rm SR}(G)$ as follows: (i) $O_{\rm SR}(G)=\mathcal{M}$ if Maker has a winning strategy for both the M-game and the B-game; (ii) $O_{\rm SR}(G)=\mathcal{B}$ if Breaker has a winning strategy for both the M-game and the B-game; (iii) $O_{\rm SR}(G)=\mathcal{N}$ if the first player has a winning strategy. In~\cite{mbrg,mbkrg}, the authors studied the minimum number of moves/steps/time for Maker (Breaker, respectively) to win the game provided Maker (Breaker, respectively) has a winning strategy in MBRG or MB$k$RG. Analogously, one can define the minimum number of moves/steps/time for each player to win or lose the MBSRG. However, in this paper, we will focus on the outcomes of MBSRG played on graphs.

The paper is organized as follows. In Section~\ref{sec_comparison}, we obtain some general results on the outcome of MBSRG and study the relation between $O_{\rm SR}(G)$ and $O_{\rm R}(G)$. In Section~\ref{sec_graphs}, we determine $O_{\rm SR}(G)$ for some graph classes, including corona product graphs, Cartesian product graphs, and modular product graphs.

We recall some additional terminology and notations. The \emph{diameter}, $\diam(G)$, of $G$ is $\max\{d(u,w): u,w\in V(G)\}$. The \emph{degree} of a vertex $v\in V(G)$, denoted by $\deg_G(v)$, is $|N(v)|$; a \emph{leaf} (or an \emph{end-vertex}) is a vertex of degree one, a \emph{support vertex} is a vertex that is adjacent to a leaf, a \emph{major vertex} is a vertex of degree at least three, and a \emph{universal vertex} is a vertex of degree $|V(G)|-1$. For $X\subseteq V(G)$, $G[X]$ denotes the subgraph of $G$ induced by the vertices in $X$; i.e., for $x,y\in X$, $xy\in E(G[X])$ if and only if $xy\in E(G)$. Two distinct vertices $u$ and $w$ in a graph are called \emph{adjacent twins} if $N(u) \cup\{u\}=N(w)\cup\{w\}$ and they are called \emph{non-adjacent twins} if $N(u)=N(w)$; $u$ and $w$ are called \emph{twins} if they are either adjacent twins or non-adjacent twins. Let $P_n$, $C_n$, and $K_n$, respectively, denote the path, the cycle, and the complete graph on $n$ vertices, and let $K_{s,t}$ denote the complete bi-partite graph with two parts of sizes $s$ and $t$. For any positive integer $k$, let $[k]$ denote the set $\{1,2,\ldots, k\}$.


\section{General results, and $O_{\rm SR}(G)$ versus $O_{\rm R}(G)$}\label{sec_comparison}

In this section, we obtain some general results on MBSRG and examine the relationship between $O_{\rm SR}(G)$ and $O_{\rm R}(G)$. We recall some known results on the strong metric dimension of graphs. A set $X\subseteq V(G)$ is called a \emph{twin-free clique} if no two vertices of $X$ are twins in $G$ and $G[X]$, the subgraph induced by $X$, forms a clique. The \emph{twin-free clique number}, $\overline{\omega}(G)$, of a graph $G$ is the maximum cardinality among all twin-free cliques in $G$.

\begin{theorem}\emph{\cite{MMD}}\label{thm_mmd}
For any connected graph $G$, $\sdim(G)=\tau(G_{\rm SR})$.
\end{theorem}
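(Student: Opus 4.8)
The plan is to prove the equality by establishing the two inequalities $\sdim(G) \ge \tau(G_{\rm SR})$ and $\sdim(G) \le \tau(G_{\rm SR})$ separately, with the bridge in both directions being the MMD relation that defines $G_{\rm SR}$. For the first inequality, I would start from a minimum strong resolving set $S$ of $G$, so $|S| = \sdim(G)$. The excerpt already records from \cite{MMD} that if $u$ MMD $v$, then $S \cap \{u,v\} \ne \emptyset$. Since every edge $uv$ of $G_{\rm SR}$ corresponds exactly to an MMD pair, with both endpoints lying in $V(G_{\rm SR})$, the set $S \cap V(G_{\rm SR})$ meets every edge of $G_{\rm SR}$; hence it is a vertex cover of $G_{\rm SR}$, and $\tau(G_{\rm SR}) \le |S \cap V(G_{\rm SR})| \le |S| = \sdim(G)$.

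For the reverse inequality, I would take a minimum vertex cover $W$ of $G_{\rm SR}$ and argue that $W$ is itself a strong resolving set of $G$; this immediately yields $\sdim(G) \le |W| = \tau(G_{\rm SR})$. The crux is a reachability lemma: for any vertex $x$ and any vertex $p$, there exists a vertex $q$ maximally distant from $x$ such that $p$ lies on an $x$-$q$ geodesic. This is proved by a monotone walk: if $p$ is not already maximally distant from $x$, pass to a neighbor strictly farther from $x$, observe that $p$ still lies on the extended geodesic, and iterate; the process terminates because distances from $x$ are bounded by $\diam(G)$.

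Given distinct $x, y$, I would apply this lemma twice: first to obtain $v$ maximally distant from $x$ with $y$ on an $x$-$v$ geodesic, then to obtain $u$ maximally distant from $v$ with $x$ on a $v$-$u$ geodesic. A short distance computation then shows that $v$ remains maximally distant from $u$ (using that $x$ lies on a $v$-$u$ geodesic together with $v$ being maximally distant from $x$), so $u$ and $v$ are MMD and $uv \in E(G_{\rm SR})$. Because $W$ is a vertex cover, at least one of $u, v$ lies in $W$. If $v \in W$, then $y$ lying on an $x$-$v$ geodesic shows that $v$ strongly resolves $x, y$; if $u \in W$, then stitching the two geodesics places $u, x, y, v$ in order along a single $u$-$v$ geodesic, so $x$ lies on a $u$-$y$ geodesic and $u$ strongly resolves $x, y$. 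Hence $W$ strongly resolves every pair, and combining the two inequalities gives $\sdim(G) = \tau(G_{\rm SR})$.

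The main obstacle I anticipate is the reachability lemma together with the distance bookkeeping that upgrades ``maximally distant from $x$'' to ``maximally distant from $u$'' and that certifies the produced endpoints $u, v$ really are mutually maximally distant. The termination of the monotone walk and the triangle-inequality estimate $d(u,v') \le d(u,x) + d(x,v') \le d(u,x) + d(x,v) = d(u,v)$ for a neighbor $v'$ of $v$ are the delicate points. Once the lemma and this estimate are in hand, the case analysis over which endpoint of the MMD edge $uv$ lies in $W$ is routine.
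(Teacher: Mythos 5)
The paper itself contains no proof of Theorem~\ref{thm_mmd}: it is quoted verbatim from \cite{MMD} (Oellermann and Peters-Fransen), so there is no in-paper argument to compare against. Your proof is correct, and it is in fact a faithful reconstruction of the original argument from that reference. The direction $\tau(G_{\rm SR})\le\sdim(G)$ via the cited fact that any strong resolving set meets every MMD pair is exactly right, and the converse is carried by the two standard ingredients: the monotone-walk lemma (every $p$ lies on an $x$--$q$ geodesic for some $q$ maximally distant from $x$, with termination forced by $\diam(G)$) and the upgrade of $v$ from ``maximally distant from $x$'' to ``maximally distant from $u$,'' which your estimate $d(u,v')\le d(u,x)+d(x,v')\le d(u,x)+d(x,v)=d(u,v)$ for $v'\in N(v)$ handles correctly. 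The final case analysis is also sound: if $v\in W$ then $y$ lies on an $x$--$v$ geodesic directly, and if $u\in W$ the concatenation gives $d(u,v)=d(u,x)+d(x,y)+d(y,v)$, whence $d(u,y)=d(u,x)+d(x,y)$ by the triangle inequality squeeze, so $x$ lies on a $u$--$y$ geodesic. (One point worth making explicit in a final write-up: since $x\neq y$, the lemma yields $v\neq x$ and then $u\neq v$, so $uv$ is a genuine edge of $G_{\rm SR}$ rather than a degenerate pair.)
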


\begin{theorem}\emph{\cite{yi_sdim}}\label{thm_characterization}
Let $G$ be a connected graph of order $n \ge 2$. Then $1\le \sdim(G)\le n-1$, and we have the following:
\begin{itemize}
\item[(a)] $\sdim(G)=1$ if and only if $G=P_n$,
\item[(b)] $\sdim(G)=n-1$ if and only if $G=K_n$,
\item[(c)] for $n\ge 4$, $\sdim(G)=n-2$ if and only if $\diam(G)=2$ and $\overline{\omega}(G)=2$.
\end{itemize}
\end{theorem}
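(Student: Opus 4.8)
The plan is to reduce all three parts to a single combinatorial identity. Using Theorem~\ref{thm_mmd} together with Gallai's identity $\tau(H)+\alpha(H)=|V(H)|$ applied to $H=G_{\rm SR}$, and observing that two vertices fail to be MMD exactly when they are non-adjacent in $G_{\rm SR}$ (with vertices in no MMD pair being isolated in $G_{\rm SR}$), I would first establish
\[
\sdim(G)=n-\beta(G), \qquad \beta(G):=\max\{\,|W|:W\subseteq V(G)\text{ has no MMD pair}\,\}.
\]
Indeed, an MMD-free set is precisely an independent set of $G_{\rm SR}$ augmented by the $n-|V(G_{\rm SR})|$ vertices lying in no MMD pair, so $\beta(G)=\alpha(G_{\rm SR})+n-|V(G_{\rm SR})|$ and hence $n-\beta(G)=|V(G_{\rm SR})|-\alpha(G_{\rm SR})=\tau(G_{\rm SR})=\sdim(G)$. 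The bounds $1\le\sdim(G)\le n-1$ are then immediate: $\beta(G)\ge 1$ since any singleton is MMD-free, and $\beta(G)\le n-1$ since for connected $G$ of order $n\ge 2$ the two ends of a diametral geodesic are MMD, so $V(G)$ is not MMD-free.

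For part (a), I would argue directly that if $\{w\}$ is a strong resolving set then layering $V(G)$ by distance from $w$ forces a path. The key observation is that no two vertices $a\ne b$ can be equidistant from $w$: otherwise neither does $a$ lie on a $b$--$w$ geodesic nor $b$ on an $a$--$w$ geodesic, so $\{w\}$ fails to resolve $\{a,b\}$. Hence the $n$ values $d(v,w)$ are distinct, forcing the distance set $\{0,1,\dots,n-1\}$ with each class a singleton and consecutive classes joined, which yields $G=P_n$; the converse is the standard check that an end-vertex of $P_n$ strongly resolves it. For part (b), $\sdim(G)=n-1$ forces $\beta(G)=1$, i.e.\ every pair of vertices is MMD, so $G_{\rm SR}=K_n$. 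If $\diam(G)\ge 2$ I would take a geodesic $u\!-\!m\!-\!v$ and note that $m$ is not maximally distant from $u$ (its neighbour $v$ is farther), so $u$ and $m$ are not MMD, a contradiction. Thus $\diam(G)=1$ and $G=K_n$; the converse is the computation $\tau(K_n)=n-1$.

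Part (c) is the crux, and I would split it by $\diam(G)$. First, if $\diam(G)\ge 3$, pick a geodesic $x_0x_1\cdots x_d$ with $d\ge 3$; for any $i<j$ in $\{0,1,2\}$ the vertex $x_{j+1}$ exists (as $j+1\le d$) and satisfies $d(x_{j+1},x_i)=j+1-i>j-i=d(x_j,x_i)$, so $x_j$ is not maximally distant from $x_i$ and the triple $\{x_0,x_1,x_2\}$ is MMD-free. Hence $\beta(G)\ge 3$ and $\sdim(G)\le n-3<n-2$. Since $\sdim(G)=n-2\ne n-1$ gives $G\ne K_n$ by part (b), we have $\diam(G)\ge 2$, so in fact $\diam(G)=2$. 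Second, on a diameter-$2$ graph I would classify MMD pairs: every pair at distance $2$ is automatically MMD (all distances are $\le 2$), whereas an adjacent pair $u,w$ is MMD if and only if $N(u)\cup\{u\}=N(w)\cup\{w\}$, i.e.\ $u,w$ are adjacent twins. Consequently a set is MMD-free exactly when it is a clique no two of whose vertices are twins, giving $\beta(G)=\overline{\omega}(G)$ and therefore $\sdim(G)=n-\overline{\omega}(G)$. Then $\sdim(G)=n-2\iff\overline{\omega}(G)=2$, which is the claimed equivalence.

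The main obstacle is the diameter-$2$ MMD classification in part (c)---specifically, verifying both inclusions $N(u)\cup\{u\}\subseteq N(w)\cup\{w\}$ and its reverse from the two maximal-distance conditions, and then confirming that MMD-free sets coincide exactly with twin-free cliques so that $\beta(G)=\overline{\omega}(G)$. The distance estimates for the bounds and for parts (a) and (b) are routine once the reduction $\sdim(G)=n-\beta(G)$ is in place. A minor point worth recording is the role of the hypothesis $n\ge 4$ in (c): it is precisely what separates the value $n-2$ from $1$ and from $n-1$, so that parts (a), (b), (c) describe mutually disjoint regimes.
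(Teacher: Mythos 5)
This statement is quoted in the paper from \cite{yi_sdim} without proof, so there is no internal argument to compare against; your proposal must stand on its own, and it does --- it is correct. The backbone, passing from Theorem~\ref{thm_mmd} through Gallai's identity $\tau+\alpha=|V|$ to $\sdim(G)=n-\beta(G)$ with $\beta(G)$ the maximum size of an MMD-free set, is sound (your bookkeeping $\beta(G)=\alpha(G_{\rm SR})+n-|V(G_{\rm SR})|$ correctly handles the paper's convention that $V(G_{\rm SR})$ omits vertices in no MMD pair), and it cleanly unifies the bounds, part (b) (every pair MMD, then a middle vertex of a geodesic kills $\diam(G)\ge 2$), and part (c), while part (a) gets the separate distance-layering argument it genuinely needs, since the $\beta$-identity alone gives $\sdim(P_n)=1$ but not the converse. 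The two verifications you flagged as the crux check out: on a geodesic $x_0x_1\cdots x_d$ with $d\ge 3$ the triple $\{x_0,x_1,x_2\}$ is MMD-free because $x_{j+1}$ witnesses that $x_j$ is not maximally distant from $x_i$, which excludes $\diam(G)\ge 3$ when $\sdim(G)=n-2$; and in a diameter-$2$ graph, pairs at distance $2$ are automatically MMD while an adjacent pair $u,w$ is MMD iff $N[u]=N[w]$, whence $\beta(G)=\overline{\omega}(G)$ and $\sdim(G)=n-\overline{\omega}(G)$, which is in effect the content of \cite{yi_sdim} behind part (c). One line worth adding in a write-up: the paper's twin-free clique forbids both kinds of twins, but non-adjacent twins cannot occur inside a clique (if $uw\in E(G)$ and $N(u)=N(w)$ then $u\in N(u)$, impossible), so your identification of MMD-free sets with twin-free cliques is exact. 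Your closing observation about $n\ge 4$ is also accurate: the hypothesis merely keeps the values $1$, $n-2$, $n-1$ in disjoint regimes, and indeed your argument for (c) already goes through at $n=3$.
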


It is known that metric dimension is not a monotone parameter on subgraph inclusion (see~\cite{MB, broadcast}). We show that strong metric dimension is not a monotone parameter with respect to subgraph inclusion, but it is monotone with respect to strong resolving graphs. For analogous results on the fractional strong metric dimension of graphs, we refer to~\cite{sdimF_products}.

\begin{proposition}
\begin{itemize}
\item[(a)] The ratio $\frac{\sdim(H)}{\sdim(G)}$ is arbitrarily large for suitable choice of graphs $H \subset G$.
\item[(b)] If $H_{\rm SR} \subseteq G_{\rm SR}$, then $\sdim(H) \le \sdim(G)$.
\end{itemize}
\end{proposition}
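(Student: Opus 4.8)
The plan is to reduce both parts to the identity $\sdim(\cdot)=\tau\big((\cdot)_{\rm SR}\big)$ supplied by Theorem~\ref{thm_mmd}, so that strong metric dimension questions become vertex cover questions on strong resolving graphs. Part (b) then becomes immediate: writing $\sdim(H)=\tau(H_{\rm SR})$ and $\sdim(G)=\tau(G_{\rm SR})$, it suffices to observe that the vertex cover number is monotone under subgraph inclusion. Indeed, if $C$ is a minimum vertex cover of $G_{\rm SR}$, then $C\cap V(H_{\rm SR})$ is a vertex cover of $H_{\rm SR}$, because every edge of $H_{\rm SR}$ is an edge of $G_{\rm SR}$ (as $H_{\rm SR}\subseteq G_{\rm SR}$) and hence is met by $C$ at an endpoint, which necessarily lies in $V(H_{\rm SR})$. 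Thus $\tau(H_{\rm SR})\le |C|=\tau(G_{\rm SR})$, i.e., $\sdim(H)\le\sdim(G)$.

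For part (a), I would exhibit an explicit infinite family realizing an unbounded ratio. For each $n$, set $t=\lceil\log_2 n\rceil$ and construct $G_n$ from a clique on $\{v_1,\dots,v_n\}$ by adjoining $t$ new vertices $x_1,\dots,x_t$ forming a \emph{separating family}: assign to each $v_i$ a distinct binary string in $\{0,1\}^t$ and join $x_k$ to $v_i$ precisely when the $k$-th coordinate of that string equals $1$. Take $H=K_n$ to be the clique, so that $H\subset G_n$ and $\sdim(H)=n-1$ by Theorem~\ref{thm_characterization}(b). The design goal of the $x_k$ is to destroy the mutual maximal distance relations inside the clique while being absorbable into a small vertex cover of $(G_n)_{\rm SR}$.

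The key claim is that \emph{no two clique vertices are mutually maximally distant in} $G_n$. Given $v_i\ne v_j$, choose a coordinate $k$ on which their strings differ, say $x_k\sim v_i$ but $x_k\not\sim v_j$; since $x_k\sim v_i\sim v_j$, we have $d_{G_n}(x_k,v_j)=2>1=d_{G_n}(v_i,v_j)$, so the neighbor $x_k$ of $v_i$ witnesses that $v_i$ is not maximally distant from $v_j$, whence $v_i$ and $v_j$ are not MMD. Consequently every edge of $(G_n)_{\rm SR}$ is incident to some $x_k$, so $\{x_1,\dots,x_t\}$ is a vertex cover of $(G_n)_{\rm SR}$; by Theorem~\ref{thm_mmd}, $\sdim(G_n)=\tau\big((G_n)_{\rm SR}\big)\le t=\lceil\log_2 n\rceil$. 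Therefore $\sdim(H)/\sdim(G_n)\ge (n-1)/\lceil\log_2 n\rceil\to\infty$, proving (a). The conceptual obstacle here is not the verification but the construction itself: one must realize that a logarithmic-size distinguishing set of vertices, rather than per-vertex pendant structures (which would merely relocate the extremal vertices and keep $\sdim$ linear), is what simultaneously kills all clique MMD pairs and yields a small cover. Once the separating family is in hand, the only relation that needs checking is the clique-clique one computed above; the helper-helper and clique-helper edges of $(G_n)_{\rm SR}$ require no analysis, as they are automatically covered by the $x_k$.
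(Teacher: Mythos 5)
Your part (b) is correct and is essentially the paper's proof verbatim: both reduce via Theorem~\ref{thm_mmd} to monotonicity of the vertex cover number, intersecting a minimum vertex cover of $G_{\rm SR}$ with $V(H_{\rm SR})$ and noting that each edge of $H_{\rm SR}$ is an edge of $G_{\rm SR}$ whose covering endpoint lies in $V(H_{\rm SR})$. For part (a), your argument is sound but takes a genuinely different route. The paper simply takes $H=C_{2m}\subset G=P_m\square K_2$ (the $2m$-cycle is the boundary cycle of the ladder), for which the strong resolving graphs are already known: $H_{\rm SR}=mK_2$ and $G_{\rm SR}=2K_2$, so $\sdim(H)=m$ and $\sdim(G)=2$, and the ratio $m/2$ is unbounded with no MMD analysis at all — the denominator is even uniformly bounded, and the ratio grows linearly in the order of the graphs. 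Your clique-plus-separating-family construction instead makes the numerator nearly maximal ($\sdim(K_n)=n-1$ by Theorem~\ref{thm_characterization}(b)) and shows that adjoining only $\lceil\log_2 n\rceil$ vertices collapses $\sdim$ to at most $\lceil\log_2 n\rceil$; your MMD computation is correct (a differing coordinate yields a neighbor $x_k$ of one clique vertex at distance $2$ from the other, so that vertex is not maximally distant from the other and the pair is not MMD, whence every edge of $(G_n)_{\rm SR}$ meets $\{x_1,\dots,x_t\}$). Two small points to pin down: choose the strings concretely (e.g., the binary representations of $0,1,\dots,n-1$) so that every coordinate equals $1$ for at least one clique vertex — otherwise some $x_k$ is isolated and $G_n$ violates the standing connectedness hypothesis under which $\sdim$ and MMD are defined in this paper — and note that, measured against the order of the host graph, your ratio is $\Theta(n/\log n)$ versus the paper's $\Theta(n)$, so the paper's elementary example is both simpler and quantitatively stronger, though either suffices since the statement only asks for an unbounded ratio.
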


\begin{proof}
(a) We denote by $X \square Y$ the Cartesian product of two graphs $X$ and $Y$. Let $H=C_{2m}$ and $G=P_m\square K_2$, where $m\ge3$. Then $H \subset G$, $H_{\rm SR}=mK_2$ and $G_{\rm SR}=2K_2$. So, $\sdim(H)=m$ and $\sdim(G)=2$. Thus, $\frac{\sdim(H)}{\sdim(G)}=\frac{m}{2} \rightarrow \infty$ as $m\rightarrow \infty$.

(b) Let $H_{\rm SR} \subseteq G_{\rm SR}$. Let $W$ be a minimum strong resolving set of $G$; then $W$ is a minimum vertex cover of $G_{\rm SR}$. Since an edge in $H_{\rm SR}$ is an edge in $G_{\rm SR}$, $W\cap V(H_{\rm SR})$ is a vertex cover of $H_{\rm SR}$. By Theorem~\ref{thm_mmd}, $\sdim(H)=\tau(H_{\rm SR}) \le \tau(G_{\rm SR})=\sdim(G)$.~\hfill
\end{proof}

As stated in Section~\ref{sec_intro}, two vertices $u$ and $w$ in $G$ are twins if $N(u)-\{w\}=N(w)-\{u\}$. Hernando et al.~\cite{Hernando} observed that the twin relation is an equivalence relation on $V(G)$ and, under it, each (twin) equivalence class induces either a clique or an independent set. The following observation is often useful in establishing a lower bound for the (strong) metric dimension of a graph.

\begin{observation}\label{obs_twin}
Let $u$ and $w$ be distinct members of the same twin equivalence class of a graph $G$.
\begin{itemize}
\item[(a)] \emph{\cite{Hernando}} For any resolving set $R$ of $G$, $R \cap \{u,w\} \neq \emptyset$.
\item[(b)] \emph{\cite{sdim_frac}} For any strong resolving set $S$ of $G$, $S \cap \{u,w\} \neq\emptyset$.
\end{itemize}
\end{observation}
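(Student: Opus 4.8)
The plan is to treat the two flavors of the twin relation---adjacent twins ($N(u)\cup\{u\}=N(w)\cup\{w\}$) and non-adjacent twins ($N(u)=N(w)$)---uniformly by first observing that the transposition $\phi$ interchanging $u$ and $w$ and fixing every other vertex is an automorphism of $G$. Indeed, in both cases $N(u)-\{w\}=N(w)-\{u\}$, so for every $v\notin\{u,w\}$ we have $uv\in E(G)$ if and only if $wv\in E(G)$; since $\phi$ fixes every edge among the remaining vertices and preserves the (non)adjacency of the pair $\{u,w\}$ (as $\phi(\{u,w\})=\{u,w\}$), it follows that $\phi\in\mathrm{Aut}(G)$. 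Because automorphisms preserve distances, $d(u,v)=d(\phi(u),\phi(v))=d(w,v)$ for every $v\notin\{u,w\}$.

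For part (a), I would argue by contradiction: suppose $R\cap\{u,w\}=\emptyset$. Then every $z\in R$ lies outside $\{u,w\}$, so the displayed equality gives $d(u,z)=d(w,z)$ for all $z\in R$; hence $\code_R(u)=\code_R(w)$, contradicting the assumption that $R$ resolves the distinct vertices $u$ and $w$. Therefore $R\cap\{u,w\}\neq\emptyset$.

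For part (b) I would instead show directly that $u$ and $w$ are mutually maximally distant, and then invoke the fact recalled in Section~\ref{sec_intro} (from \cite{MMD}) that $S\cap\{x,y\}\neq\emptyset$ for any strong resolving set $S$ whenever $x$ MMD $y$. If $u,w$ are non-adjacent twins, then $N(u)=N(w)\neq\emptyset$ (as $G$ is connected of order at least $2$), so $d(u,w)=2$, while every $a\in N(u)=N(w)$ is adjacent to $w$, giving $d(a,w)=1\le 2=d(u,w)$; thus $u$ is maximally distant from $w$. If $u,w$ are adjacent twins, then $d(u,w)=1$, and each $a\in N(u)$ either equals $w$ or lies in $N(u)-\{w\}=N(w)-\{u\}\subseteq N(w)$, whence $d(a,w)\le 1=d(u,w)$; again $u$ is maximally distant from $w$. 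Since the twin relation is symmetric, the same arguments show $w$ is maximally distant from $u$, so $u$ MMD $w$ and the claim follows.

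The only delicate point is the case split on the two kinds of twins, together with correctly pinning down $d(u,w)$ in each case; once the automorphism observation (for part (a)) and the two short maximal-distance checks (for part (b)) are in place, both conclusions are immediate.
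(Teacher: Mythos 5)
Your proposal is correct, but note that the paper itself supplies no proof of this observation: both parts are stated as recalled facts, with (a) cited to Hernando et al.\ and (b) cited to the fractional strong metric dimension paper, so there is no in-paper argument to match against. Your write-up is a sound, self-contained substitute. For (a), the transposition-is-an-automorphism argument is valid (the identity $N(u)-\{w\}=N(w)-\{u\}$, which covers both adjacent and non-adjacent twins, is exactly what makes $\phi$ edge-preserving), though it is slightly heavier machinery than needed --- the same conclusion $d(u,z)=d(w,z)$ for $z\notin\{u,w\}$ follows directly by swapping the first vertex of a shortest $u$--$z$ path, which is presumably the argument in the cited source. For (b), your reduction to the mutually-maximally-distant fact of Oellermann and Peters-Fransen is a nice choice in context, since that fact is already recalled verbatim in Section 1 of this paper and is the engine behind $\sdim(G)=\tau(G_{\rm SR})$; the alternative direct route (as in the cited reference) would instead assume $S\cap\{u,w\}=\emptyset$ and use $d(u,z)=d(w,z)$ to show that $u$ on a $w$--$z$ geodesic forces $d(w,z)=d(w,u)+d(u,z)>d(w,z)$, a contradiction. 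Your MMD computations are correct in both cases, including the determination $d(u,w)=2$ for non-adjacent twins (simplicity of $G$ rules out $uw\in E(G)$ when $N(u)=N(w)$). One small caveat you handled appropriately: the non-adjacent case needs $N(u)\neq\emptyset$, which requires connectivity and order at least two --- not literally guaranteed by the statement's ``a graph $G$,'' but consistent with the paper's standing assumption that all graphs considered are connected.
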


We recall the following result on the outcome of MBRG. See \cite{mbrg,mbkrg} for definitions of pairing resolving set and quasi-pairing resolving set in the metric context; their definitions in the strong metric context, which are essentially identical, are given in the ensuing paragraph.

\begin{proposition}\label{outcome_r}
Let $G$ be a connected graph.
\begin{itemize}
\item[(a)] \emph{\cite{mbrg}} If $G$ admits a pairing resolving set, then $O_{\rm R}(G)=\mathcal{M}$.
\item[(b)] \emph{\cite{mbkrg}} If $G$ admits a quasi-pairing resolving set, then $O_{\rm R}(G)\in\{\mathcal{M}, \mathcal{N}\}$.
\item[(c)] \emph{\cite{FMBRG}} If $G$ has $k \ge 0$ twin equivalence class(es) of cardinality $2$ and exactly one twin equivalence class of cardinality $3$ with $\dim(G)=k+2$, then $O_{\rm R}(G)=\mathcal{N}$.
\item[(d)] \emph{\cite{mbrg}} If $G$ has a twin equivalence class of cardinality at least $4$, then $O_{\rm R}(G)=\mathcal{B}$.
\item[(e)] \emph{\cite{mbrg}} If $G$ has two distinct twin equivalence classes of cardinality $3$, then $O_{\rm R}(G)=\mathcal{B}$.
\end{itemize}
\end{proposition}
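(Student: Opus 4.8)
The five parts split into two families, and I would organize the proof accordingly: parts (a) and (b) are won by \emph{pairing strategies}, while parts (c)--(e) are won by reducing the resolving condition to a transversal condition on the twin equivalence classes and then analyzing the resulting small Maker--Breaker game one class at a time.

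For (a), recall that a \emph{pairing resolving set} is a family of pairwise disjoint pairs $\{a_i,b_i\}$ such that every transversal (one vertex taken from each pair) is a resolving set of $G$. The plan is to let Maker follow the associated pairing strategy: whenever Breaker plays $a_i$ (resp.\ $b_i$), Maker answers immediately with $b_i$ (resp.\ $a_i$); if Breaker plays an unpaired vertex or inside a completed pair, Maker plays any available vertex. This guarantees Maker eventually owns a vertex of every pair, hence a transversal, hence a resolving set, and the argument is insensitive to who moves first, so $O_{\rm R}(G)=\mathcal{M}$. For (b), the definition of a \emph{quasi-pairing resolving set} relaxes this by allowing one triple among the pairs, from which Maker must capture two vertices. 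Here Maker can still secure a good transversal \emph{provided he may spend his first move} inside the triple, after which the triple degenerates to a single residual pair and the whole structure is a genuine pairing; thus Maker wins the M-game, which excludes $\mathcal{B}$ and forces $O_{\rm R}(G)\in\{\mathcal{M},\mathcal{N}\}$.

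For (c)--(e) the organizing step is a reduction relating resolvability to a transversal condition on twin classes. By Observation~\ref{obs_twin}(a), any resolving set $R$ omits at most one vertex from each twin class; in particular $R$ must meet every size-$2$ class, must contain at least $2$ vertices of every size-$3$ class, and at least $3$ of any class of size $\ge 4$. This \emph{necessary} condition is all that is needed wherever I claim Breaker wins. For the single place where I claim Maker wins, namely the M-game of (c), I also need the converse, and this is exactly where the hypothesis $\dim(G)=k+2$ enters: it says the forced vertices already form a minimum resolving set, from which one deduces that any set meeting each size-$2$ class and containing $2$ vertices of the size-$3$ class is resolving (non-twin pairs being automatically resolved). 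Granting this reduction, each twin class becomes an independent subgame: a size-$2$ class is a pair Maker secures by a pairing response, whereas capturing $2$ of the $3$ vertices of a size-$3$ class is a \emph{first-player} subgame, since the player who moves first there finishes with $2$ of its $3$ vertices.

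The outcomes then follow from how many such triples are present and who can seize the tempo. In (c) there is exactly one size-$3$ class, so the first player opens in that triple and wins the triple subgame while handling all size-$2$ classes (and the residual pair of the triple) by pairing; hence Maker wins the M-game and Breaker wins the B-game, giving $O_{\rm R}(G)=\mathcal{N}$. In (d) a class of size $\ge 4$ forces Maker to own at least $3$ of $\ge 4$ vertices; Breaker answers every Maker move inside this class with another move inside it, thereby securing $2$ of its first four vertices regardless of who starts, so Maker cannot meet even the necessary condition and $O_{\rm R}(G)=\mathcal{B}$. The genuinely delicate case is (e), with two size-$3$ classes $T_1,T_2$: Maker needs $2$ of $3$ in \emph{both}, i.e.\ to be effectively first in both triples, which Breaker must deny. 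The main obstacle is that the naive ``respond in the triple Maker just used'' strategy backfires, handing Maker $2$ in each triple; instead Breaker must \emph{lead}, grabbing a first vertex in one triple and, as soon as Maker commits to a triple, racing to a second vertex in the other, using the odd size $3$ to keep a one-move lead and reach $2$ in some $T_i$. Verifying by a short case analysis of the opening moves that this tempo-stealing strategy wins for Breaker in both the M-game and the B-game is the crux of (e), and it yields $O_{\rm R}(G)=\mathcal{B}$.
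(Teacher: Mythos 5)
The paper does not actually prove Proposition~\ref{outcome_r}: all five parts are quoted from \cite{mbrg}, \cite{mbkrg} and \cite{FMBRG}, so there is no in-paper argument to compare against; your reconstruction must be judged against the standard arguments of those sources, and in outline it matches them and is sound. The pairing strategy in (a) is correct (Maker's filler moves are harmless since owning one vertex of a pair settles it). Your reduction of (c)--(e) to transversal conditions on twin classes via Observation~\ref{obs_twin}(a) is the right organizing step, and your Breaker strategies are correct: in (d), answering inside the big class (equivalently, pairing two disjoint pairs within it) secures Breaker two of its $\ge 4$ vertices whoever starts, which already violates the necessary condition; in (e), your key observation that Breaker must \emph{lead} --- opening in a triple Maker has not touched, after which she is guaranteed a second vertex of that triple on her very next move --- is exactly the winning idea, and the naive mirroring strategy would indeed fail.

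Two points need repair. First, in (b) you misstate the definition: a quasi-pairing resolving set is not ``pairs plus a triple from which Maker needs two vertices,'' but a set of pairs $X$ together with a designated vertex $v\in V(G)-\cup X$ such that $Z\cup\{v\}$ is resolving whenever $Z$ meets every pair (this is the definition the paper itself records in the strong setting, following \cite{mbkrg}). Your argument survives verbatim --- Maker spends his first move on $v$ and then plays the pairing strategy, winning the M-game and excluding $\mathcal{B}$ --- but as written you prove a statement about a different object. Second, and more substantively, in (c) your parenthetical justification of the converse (``non-twin pairs being automatically resolved'') is not a proof and is false as a general principle. What $\dim(G)=k+2$ actually gives is \emph{one} minimum resolving set, which by the counting from Observation~\ref{obs_twin}(a) must consist of exactly one vertex from each size-$2$ class and exactly two from the size-$3$ class. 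To conclude that \emph{every} such transversal is resolving --- which is what Maker's strategy produces --- you need the standard fact that transposing two twins (fixing all other vertices) is an automorphism of $G$, hence carries resolving sets to resolving sets; composing such transpositions within the classes maps the one known minimum resolving set onto any admissible transversal. With that lemma inserted, (c) is complete; without it, the M-game half of (c) has a genuine hole, since it is the only place in the proposition where Maker's win requires a sufficiency (rather than necessity) claim.
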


Analogous to the concept of a pairing dominating set (see~\cite{mbdg}) and a pairing (distance-$k$) resolving set (see~\cite{mbrg, mbkrg}), we define a pairing strong resolving set as follows. Given a set $X=\cup_{i=1}^{\beta}\{\{u_i, w_i\}\}$ such that $\cup X \subseteq V(G_{\rm SR})$ and $|\cup X|=2\beta$, $X$ is called a \emph{pairing strong resolving set} of $G$ if a set $Z \subseteq V(G_{\rm SR})$ is a strong resolving set of $G$ whenever the condition ($\dag$) $|Z|=\beta$ and $Z \cap\{u_i, w_i\}\neq\emptyset$ for each $i\in[\beta]$ holds; in this case, we will also call $X$ a \emph{pairing vertex cover} of $G_{\rm SR}$. Analogous to the concept of a quasi-pairing distance-$k$ resolving set in~\cite{mbkrg}, we define a quasi-pairing strong resolving set as follows. Let $X$ be as given. If there exists a vertex $v\in V(G_{\rm SR})-\cup X$ such that $Z\cup \{v\}$ is a strong resolving set of $G$ whenever $Z \subseteq V(G_{\rm SR})$ satisfies condition ($\dag$), then $X$ is called a \emph{quasi-pairing strong resolving set} of $G$; in this case, we will also call $X$ a \emph{quasi-pairing vertex cover} of $G_{\rm SR}$. 

Now, we obtain some general results on the outcome of MBSRG, which will be used in determining $O_{\rm SR}(G)$ in Section~\ref{sec_graphs}. Let $\mathbb{Z}^+$ denote the set of positive integers. Let $\Delta(H)$ denote the maximum degree of a graph $H$ and, for $u\in V$, let $u^c$ denote $V-\{u\}$. Next, we obtain our main theorem on the outcome of MBSRG.

\begin{theorem}\label{main_result}Let $G$ be a connected graph.
\begin{itemize}
\item[(a)] Suppose $G$ admits a pairing strong resolving set (equivalently, if $G_{\rm SR}$ admits a pairing vertex cover), then $O_{\rm SR}(G)=\mathcal{M}$.
\item[(b)] Suppose $G$ admits a quasi-pairing strong resolving set (equivalently, if $G_{\rm SR}$ admits a quasi-pairing vertex cover), then $O_{\rm SR}(G)\in\{\mathcal{M}, \mathcal{N}\}$.
\item[(c)] Suppose $\Delta(G_{\SR})\ge 2$, then $O_{\rm SR}(G)\in\{\mathcal{N}, \mathcal{B}\}$.
\item[(d)] Suppose $\Delta(G_{\SR}[u^c])\geq 2$ for every $u\in V(G_{\SR})$; then $O_{\rm SR}(G)=\mathcal{B}$.
\end{itemize}
\end{theorem}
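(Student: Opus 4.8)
The plan is to exploit the identification, furnished by Theorem~\ref{thm_mmd} together with the defining property of $G_{\rm SR}$ (each edge of $G_{\rm SR}$ is an MMD pair, and such a pair must be hit by every strong resolving set), that a set $S \subseteq V(G)$ is a strong resolving set of $G$ if and only if $S \cap V(G_{\rm SR})$ is a vertex cover of $G_{\rm SR}$; equivalently, $S$ fails to be a strong resolving set exactly when $S$ misses both endpoints of some edge of $G_{\rm SR}$. Thus MBSRG is, up to relabeling, a vertex-cover game on $G_{\rm SR}$: Maker wins if and only if his claimed vertices meet every edge of $G_{\rm SR}$, while Breaker wins if and only if he claims both endpoints of some edge of $G_{\rm SR}$. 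A key simplifying remark is that any vertex of $V(G) \setminus V(G_{\rm SR})$ is \emph{neutral}: claiming it neither covers an edge (so it never helps Maker) nor completes an edge (so it never helps Breaker), and such moves will be absorbed into the ``respond elsewhere'' clauses of each strategy.

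For part~(a) I would run the usual pairing strategy on the pairs $\{u_i,w_i\}$ of a pairing strong resolving set $X$. Maker maintains the invariant that after each of his moves he holds at least one vertex of every pair Breaker has touched: whenever Breaker claims $u_i$ (resp.\ $w_i$) he answers with $w_i$ (resp.\ $u_i$), and whenever Breaker plays a neutral vertex or a pair already settled in Maker's favor, Maker claims an arbitrary vertex of an untouched pair. This invariant is maintainable whether Maker or Breaker moves first, so Maker ends with a transversal $Z$ of $X$ satisfying~($\dag$), a strong resolving set; hence $O_{\rm SR}(G)=\mathcal{M}$. Part~(b) is the same idea with a head start: in the M-game Maker first claims the distinguished vertex $v \in V(G_{\rm SR}) - \cup X$ and thereafter pairs as above, so his final set contains $Z \cup \{v\}$, a strong resolving set. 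Thus Maker wins the M-game, which already forbids the outcome $\mathcal{B}$ and gives $O_{\rm SR}(G) \in \{\mathcal{M},\mathcal{N}\}$.

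For part~(c) I would exhibit a Breaker win in the B-game, which rules out $\mathcal{M}$. Choose $v \in V(G_{\rm SR})$ with $\deg_{G_{\rm SR}}(v) \ge 2$ and two neighbors $v_1,v_2$. Breaker opens by claiming $v$; on his next move he claims whichever of $v_1,v_2$ Maker did not just take (and $v_1$ if Maker took neither). Since Maker can block at most one of $v_1,v_2$ in a single reply, Breaker secures both endpoints of one of the edges $vv_1, vv_2 \in E(G_{\rm SR})$, leaving that edge uncovered; hence Breaker wins and $O_{\rm SR}(G) \in \{\mathcal{N},\mathcal{B}\}$.

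Finally, part~(d) combines these. The hypothesis implies $\Delta(G_{\rm SR}) \ge 2$, so the B-game is a Breaker win by~(c); the crux is the M-game, where Maker moves first. After Maker's opening move on a vertex $u$, I want Breaker to replay~(c) on the board that remains. If $u \notin V(G_{\rm SR})$ the move is neutral, and Breaker executes~(c) on $G_{\rm SR}$ as though he were the first player. If $u \in V(G_{\rm SR})$, then every edge of $G_{\rm SR}$ incident to $u$ is already covered by Maker, so the only edges still in play are those of $G_{\rm SR}[u^c]$; the hypothesis $\Delta(G_{\rm SR}[u^c]) \ge 2$ supplies a vertex $v$ with two neighbors $v_1,v_2$ in $G_{\rm SR}[u^c]$ (so $v_1,v_2 \neq u$ are still free), and Breaker runs~(c) on this subgraph to seize both ends of $vv_1$ or $vv_2$. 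The delicate point throughout is that Breaker must react \emph{immediately}, so that Maker can never block both chosen neighbors before Breaker grabs one; the $u^c$ hypothesis is precisely what keeps a degree-$\ge 2$ vertex alive after Maker's single opening move, neutral or not. Since Breaker wins both games, $O_{\rm SR}(G)=\mathcal{B}$.
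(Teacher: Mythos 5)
Your proof is correct and takes essentially the same route as the paper's: after identifying MBSRG with a vertex-cover game on $G_{\rm SR}$, you use the pairing strategy for (a), the pairing strategy plus the head-start move on $v$ for (b), and the double threat at a degree-two vertex of $G_{\rm SR}$ for (c) and (d). You are in fact a bit more explicit than the paper, which leaves implicit the B-game in part (d) and the case where Maker's opening move lies outside $V(G_{\rm SR})$.
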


\begin{proof}
Parts (a) and (b) follow immediately from the definitions of a pairing strong resolving set and a quasi-pairing strong resolving set, respectively. For part (c), suppose $G_{\SR}$ contains a vertex $w$ of degree at least two. In the $B$-game, Breaker can pick $w$ and one of its neighbors of $w$ as her first two moves, thus denying Maker a vertex cover of $G_{\SR}$. The following is a proof of part (d). Breaker, playing second, picks as her first move a vertex $v$ having at least two neighbors in $G_{\SR}[u^c]$, where $u\in V(G_{\SR})$ is the first move of Maker. Breaker can pick as her second move a neighbor of $v$ in $G_{\SR}$ regardless of the two moves made by Maker, thus denying Maker a vertex cover of $G_{\SR}$.~\hfill
\end{proof}

\begin{corollary}\label{outcome_sr}
Let $G$ be a connected graph.
\begin{itemize}
\item[(a)] $O_{\rm SR}(G)=\mathcal{M}$ if and only if $G_{\rm SR} = xK_2$ for $x\in \mathbb{Z}^+$.
\item[(b)] Let $H\in\{C_3, P_3, P_4, P_5\}$. If $G_{\rm SR} =H \cup xK_2$ for $x\in \mathbb{Z}^+ \cup\{0\}$, then $O_{\rm SR}(G)=\mathcal{N}$.
\item[(c)] Let $H^*\in\{K_m, C_m, P_{m+2}, xC_3 \cup yP_3\}$, where $m\ge4$ and $x+y\geq 2$. If $G_{\rm SR} \supseteq H^*$, then $O_{\rm SR}(G)=\mathcal{B}$.
\end{itemize}
\end{corollary}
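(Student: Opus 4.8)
The plan is to derive all three parts from Theorem~\ref{main_result}, using throughout the structural fact that $G_{\rm SR}$ has no isolated vertices (by definition every vertex of $G_{\rm SR}$ is MMD with some vertex of $G$). In each part I would pin the outcome down by intersecting the two-element sets that the various parts of Theorem~\ref{main_result} produce; the real work lies only in checking their hypotheses against the prescribed $G_{\rm SR}$.

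For part~(a), I would first handle the backward implication: if $G_{\rm SR}=xK_2$, then pairing the two endpoints of each of the $x$ edges gives a pairing vertex cover, since any transversal meets every edge, so $O_{\rm SR}(G)=\mathcal{M}$ by Theorem~\ref{main_result}(a). For the forward implication I would argue contrapositively: if $G_{\rm SR}\neq xK_2$ then, having no isolated vertex, $G_{\rm SR}$ must contain a vertex of degree at least two, so $\Delta(G_{\rm SR})\geq 2$, and Theorem~\ref{main_result}(c) gives $O_{\rm SR}(G)\in\{\mathcal{N},\mathcal{B}\}$; in particular $O_{\rm SR}(G)\neq\mathcal{M}$.

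For part~(b) my plan is to squeeze the outcome from both sides. Since $\Delta(G_{\rm SR})=\Delta(H)=2$ for each $H\in\{C_3,P_3,P_4,P_5\}$, Theorem~\ref{main_result}(c) gives $O_{\rm SR}(G)\in\{\mathcal{N},\mathcal{B}\}$; and if I can exhibit a quasi-pairing vertex cover of $G_{\rm SR}=H\cup xK_2$, then Theorem~\ref{main_result}(b) gives $O_{\rm SR}(G)\in\{\mathcal{M},\mathcal{N}\}$, so that the two constraints force $O_{\rm SR}(G)=\mathcal{N}$. I would construct the quasi-pairing by pairing the two endpoints of every $K_2$-component and handling the copy of $H$ as follows: for $C_3=abc$ take distinguished vertex $v=a$ and pair $\{b,c\}$; for $P_3=abc$ take $v=b$ and pair $\{a,c\}$; for $P_4=abcd$ take $v=c$ and pair $\{a,b\}$, leaving $d$ unpaired; and for $P_5=abcde$ take $v=c$ and pairs $\{a,b\},\{d,e\}$. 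A short finite check then confirms that $Z\cup\{v\}$ covers every edge for each transversal $Z$; for instance in the $P_5$ case the four transversals give $\{a,c,d\}$, $\{a,c,e\}$, $\{b,c,d\}$, $\{b,c,e\}$, all of which are vertex covers of $P_5$, while the appended $K_2$-pairs are covered by their chosen endpoints.

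For part~(c), the plan is to verify the hypothesis of Theorem~\ref{main_result}(d), namely $\Delta(G_{\rm SR}[u^c])\geq 2$ for every $u\in V(G_{\rm SR})$, which then yields $O_{\rm SR}(G)=\mathcal{B}$. Since a vertex's degree in $G_{\rm SR}-u$ is at least its degree in any subgraph, it suffices to exhibit a vertex of degree at least two inside $H^*$ when $u\notin V(H^*)$ and inside $H^*-u$ when $u\in V(H^*)$. The former is clear, as each listed $H^*$ has a degree-two vertex; for the latter I would check that $K_m-u=K_{m-1}$ and $C_m-u=P_{m-1}$ retain a degree-two vertex, that deleting a vertex of $P_{m+2}$ leaves a path on at least three vertices in one of the two pieces, and that deleting a vertex of $xC_3\cup yP_3$ leaves an intact $C_3$ or $P_3$. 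I expect this deletion analysis to be the main obstacle, the delicate point being that the thresholds are sharp: they hold precisely because $m\geq 4$ (so $K_{m-1}$ and $P_{m-1}$ are large enough), $m+2\geq 6$, and $x+y\geq 2$, whereas dropping to $P_4$, $P_5$, or a lone $C_3$ or $P_3$ permits a single deletion to leave maximum degree one---which is exactly why those configurations fall under part~(b) with outcome $\mathcal{N}$.
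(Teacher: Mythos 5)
Your proof is correct, and parts (a) and (c) coincide with the paper's argument: (a) is the same pairing-cover/$\Delta(G_{\rm SR})\ge 2$ dichotomy (the paper likewise leans implicitly on $G_{\rm SR}$ having no isolated vertices), and (c) is exactly the deletion check that the paper dismisses as ``immediate'' from Theorem~\ref{main_result}(d) --- your explicit verification that $K_{m-1}$, $P_{m-1}$, the larger piece of a split $P_{m+2}$, and an untouched $C_3$ or $P_3$ component each retain a degree-two vertex is precisely what that remark suppresses. The one genuine divergence is in part (b). For $C_3$, $P_3$, $P_4$ your quasi-pairings match the paper's up to symmetry, but the paper then rules out $\mathcal{M}$ by re-describing Breaker's B-game strategy (pick the degree-two vertex $w_2$, then a neighbor) in each case, whereas you cite Theorem~\ref{main_result}(c) once and intersect $\{\mathcal{M},\mathcal{N}\}\cap\{\mathcal{N},\mathcal{B}\}=\{\mathcal{N}\}$ --- logically the same content, but cleaner and uniform. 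More substantively, for $P_5\cup xK_2$ the paper does \emph{not} exhibit a quasi-pairing: it instead gives Maker an explicit M-game strategy (take the center $u$ of the $P_5$, note the derived strong resolving graph $G_{\rm SR}[u^c]=(x+2)K_2$, and win by part (a)), plus a bespoke Breaker strategy for the B-game. Your alternative --- the quasi-pairing $\{a,b\},\{d,e\}$ on $P_5=abcde$ with distinguished vertex $c$ --- checks out (every transversal together with $c$ covers all four path edges), so you handle all four graphs $H$ by a single mechanism where the paper needs an ad hoc reduction for $P_5$; what the paper's route buys instead is a first exhibition of the ``derived strong resolving graph'' reduction, a technique of independent use. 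Your closing observation about sharpness of the thresholds in (c) is a nice sanity check, though not needed for the proof.
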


\begin{proof}
(a) First, let $G_{\rm SR} = xK_2$ for $x\in \mathbb{Z}^+$ with $V(G_{\rm SR})=\{u_1, u_2, \ldots, u_{2x}\}$ and $E(G_{\rm SR})=\cup_{i=1}^{x}\{u_{2i-1}u_{2i}\}$. Since $\cup_{i=1}^{x}\{\{u_{2i-1},u_{2i}\}\}$ is a pairing vertex cover of $G_{\rm SR}$, we have $O_{\rm SR}(G)=\mathcal{M}$ by Theorem~\ref{main_result}(a). To see the other implication, if $\Delta(G_{\SR})\ge 2$, then $O_{\rm SR}(G)\in\{\mathcal{N}, \mathcal{B}\}$ by Theorem~\ref{main_result}(c). The condition $O_{\rm SR}(G)=\mathcal{M}$ thus implies that $\Delta(G_{\SR})=1$.

(b) First, suppose $G_{\rm SR} = C_3 \cup xK_2$ or $G_{\rm SR} = P_3 \cup xK_2$, where $x\in \mathbb{Z}^+ \cup\{0\}$, with vertex set $V(G_{\rm SR})=\{w_1, w_2, w_3\} \cup\{u_1, u_2, \ldots, u_{2x}\}$. Let  $E(C_3)=\{w_1w_2, w_2w_3, w_3w_1\}$, $E(P_3)=\{w_1w_2, w_2w_3\}$, and $E(xK_2)=\cup_{i=1}^{x}\{u_{2i-1}u_{2i}\}$. Notice $X=\{\{w_1, w_3\}\} \cup (\cup_{i=1}^{x}\{\{u_{2i-1}, u_{2i}\}\})$ is a quasi-pairing vertex cover of $G_{\rm SR}$ as $Z \cup \{w_2\}$ is a strong resolving set for $G$ satisfying condition ($\dag$); thus, $O_{\rm SR}(G)\in\{\mathcal{M},\mathcal{N}\}$ by Theorem~\ref{main_result}(b). In the B-game, Breaker can select $w_2$ as her first move and a neighbor of $w_2$ as her second move, thus denying Maker a vertex cover. We thus have $O_{\rm SR}(G)=\mathcal{N}$. 

Second, suppose $G_{\rm SR} = P_4 \cup xK_2$, where $x\in \mathbb{Z}^+ \cup\{0\}$, with vertex set $V(G_{\rm SR})=\{w_1, w_2, w_3, w_4\} \cup\{u_1, u_2, \ldots, u_{2x}\}$ and edge set $E(G_{\rm SR})=\{w_1w_2, w_2w_3, w_3w_4\} \cup (\cup_{i=1}^{x}\{u_{2i-1}u_{2i}\})$. Notice $X=\{\{w_3, w_4\}\} \cup (\cup_{i=1}^{x}\{\{u_{2i-1}, u_{2i}\}\})$ is a quasi-pairing vertex cover of $G_{\rm SR}$ as $Z \cup \{w_2\}$ is a strong resolving set for $G$ satisfying condition ($\dag$), rendering $O_{\rm SR}(G)\in\{\mathcal{M},\mathcal{N}\}$ by Theorem~\ref{main_result}(b). In the B-game, Breaker can select $w_2$ as her first move and a neighbor of $w_2$ as her second move, thus denying Maker a vertex cover. We thus have $O_{\rm SR}(G)=\mathcal{N}$. 

Third, suppose $G_{\rm SR} = P_5 \cup xK_2$, where $x\in \mathbb{Z}^+ \cup\{0\}$. In the M-game, Maker picks as his first move the central vertex $u$ of the $P_5$. Then (\emph{the derived strong resolving graph}) $G_{\SR}[u^c]=(x+2)K_2$, and Maker wins the B-game played on the derived strong resolving graph by part (a) of the present corollary. In the B-game, Breaker picks as her first move any degree-two vertex $u$ of the $P_5$, and she can pick as her second move a neighbor of $u$, thus denying Maker a vertex cover of $G_{\SR}$.

(c) This part, as well as other consequences, follows immediately from part (d) of Theorem~\ref{main_result}. ~\hfill
\end{proof}

Next, we provide a realization result on the outcome of MBSRG. We recall some terminology and notations. Fix a tree $G$. A leaf $\ell$ is called a \emph{terminal vertex} of a major vertex $v$ if $d(\ell,v) < d(\ell,w)$ for every other major vertex $w$ in $G$. The terminal degree, $\ter_G(v)$, of a major vertex $v$ is the number of terminal vertices of $v$ in $G$. An \emph{exterior major vertex} is a major vertex that has a positive terminal degree. We denote by $\sigma(G)$ and $ex(G)$, respectively, the number of leaves and the number of exterior major vertices of $G$.

\begin{proposition}\label{realization}
For each $n\ge 4$, there exist graphs $G_1$, $G_2$, $G_3$ of order $n$ such that $O_{\rm SR}(G_1)=\mathcal{M}$, $O_{\rm SR}(G_2)=\mathcal{N}$ and $O_{\rm SR}(G_3)=\mathcal{B}$.
\end{proposition}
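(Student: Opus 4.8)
The plan is to exhibit, for each realizable outcome, an explicit graph on $n$ vertices and then read off its outcome from Corollary~\ref{outcome_sr}, which reduces everything to identifying the strong resolving graph $G_{\rm SR}$. The engine behind the construction is a clean description of $T_{\rm SR}$ for a tree $T$: I claim that a vertex of $T$ is MMD with some other vertex if and only if it is a leaf, and that \emph{any} two leaves of $T$ are MMD; consequently $T_{\rm SR}=K_{\sigma(T)}$, the complete graph on the $\sigma(T)$ leaves of $T$. Granting this, all three outcomes become instances of the three parts of Corollary~\ref{outcome_sr} by tuning the number of leaves.

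For $O_{\rm SR}(G_1)=\mathcal{M}$, take $G_1=P_n$. Then $\sigma(P_n)=2$, so $(P_n)_{\rm SR}=K_2=1\cdot K_2$, and $O_{\rm SR}(P_n)=\mathcal{M}$ by Corollary~\ref{outcome_sr}(a). For $O_{\rm SR}(G_2)=\mathcal{N}$, take $G_2$ to be the tree with exactly three leaves obtained from a central vertex by attaching three paths (a spider with three legs) whose lengths sum to $n-1$; since $n\ge 4$ one may choose the leg lengths $1,1,n-3$. Then $(G_2)_{\rm SR}=K_3=C_3$, and applying Corollary~\ref{outcome_sr}(b) with $H=C_3$ and $x=0$ gives $O_{\rm SR}(G_2)=\mathcal{N}$. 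For $O_{\rm SR}(G_3)=\mathcal{B}$, take $G_3=K_n$; since every two vertices of $K_n$ are MMD we get $(K_n)_{\rm SR}=K_n$, which contains $K_4=H^*$ because $n\ge 4$, so $O_{\rm SR}(G_3)=\mathcal{B}$ by Corollary~\ref{outcome_sr}(c). Note that all three families are defined uniformly for every $n\ge 4$, so no separate small-order cases are needed.

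The main obstacle, and the only nonroutine point, is establishing the structure of $G_{\rm SR}$ for these families; the tree computation is the crux. For the ``every leaf-pair is MMD'' direction, observe that the unique neighbor $u'$ of a leaf $u$ lies on every geodesic from $u$ to any other vertex $v$, so $d(u',v)=d(u,v)-1<d(u,v)$ and hence $u$ is maximally distant from $v$; applying this to two leaves shows they are mutually maximally distant. Conversely, if $\deg_T(u)\ge 2$ then for any target $v$ the vertex $u$ has a neighbor $w$ off the unique $u$--$v$ path, whence $d(w,v)=d(u,v)+1>d(u,v)$, so $u$ is not maximally distant from $v$ for any $v$ and thus contributes no vertex to $T_{\rm SR}$. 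This yields $T_{\rm SR}=K_{\sigma(T)}$, specializing immediately to $(P_n)_{\rm SR}=K_2$ and to $(G_2)_{\rm SR}=K_3$; the identity $(K_n)_{\rm SR}=K_n$ is verified directly from the fact that in $K_n$ each vertex, having all distances equal to $1$, is maximally distant from every other. With these facts in hand the three displayed invocations of Corollary~\ref{outcome_sr} complete the proof.
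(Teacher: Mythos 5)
Your proposal is correct and matches the paper's proof essentially verbatim: the paper also takes $G_1=P_n$, $G_2$ a tree with one exterior major vertex and three leaves (your spider), and $G_3=K_n$, then reads off the outcomes from $(G_1)_{\rm SR}=K_2$, $(G_2)_{\rm SR}=K_3$, $(G_3)_{\rm SR}=K_n$ via Corollary~\ref{outcome_sr}. The only difference is that you prove the identity $T_{\rm SR}=K_{\sigma(T)}$ from scratch (correctly), whereas the paper treats it as a known fact.
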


\begin{proof}
Let $n\ge 4$. Let $G_1$ be an $n$-path, $G_2$ be a tree with $ex(G_2)=1$ and $\sigma(G_2)=3$, and $G_3$ be a complete graph on $n$ vertices. Then $(G_1)_{\rm SR}= K_2$, $(G_2)_{\rm SR} = K_3$ and $(G_3)_{\rm SR} = K_n$, where $n\ge4$. By Corollary~\ref{outcome_sr}, we have $O_{\rm SR}(G_1)=\mathcal{M}$, $O_{\rm SR}(G_2)=\mathcal{N}$ and $O_{\rm SR}(G_3)=\mathcal{B}$.~\hfill
\end{proof}

Next, we examine the relationship between $O_{\rm SR}(G)$ and $O_{\rm R}(G)$. We begin with the following useful observation.

\begin{observation}\emph{\cite{sebo}}\label{obs_r,sr}
For any connected graph $G$, a strong resolving set of $G$ is a resolving set of $G$.
\end{observation}

To facilitate statements, let us identify $\mathcal{B}, \mathcal{N}, \mathcal{M}$ with $-1, 0, 1$ respectively. Observation~\ref{obs_r,sr}, along with the rules of MBRG and MBSRG, implies $O_{\rm SR}(G)\leq O_{\rm R}(G)$. We thus have the following result.

\begin{corollary}\label{thm_comparison}
The set $\{(\mathcal{B}, \mathcal{B}), (\mathcal{B}, \mathcal{N}), (\mathcal{B}, \mathcal{M}), (\mathcal{N}, \mathcal{N}), (\mathcal{N}, \mathcal{M}), (\mathcal{M}, \mathcal{M})\}$ is a codomain of the ordered pair $(O_{\rm SR}(G), O_{\rm R}(G))$, viewed as a function on the space of graphs.
\end{corollary}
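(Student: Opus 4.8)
The plan is to deduce the statement entirely from the pointwise inequality $O_{\rm SR}(G)\le O_{\rm R}(G)$ recorded just above, so that the substantive content reduces to justifying that inequality and then performing a finite enumeration. Under the identification of $\mathcal{B},\mathcal{N},\mathcal{M}$ with $-1,0,1$, the claim that the six listed pairs form a codomain is precisely the claim that the image of the map $G\mapsto(O_{\rm SR}(G),O_{\rm R}(G))$ avoids every pair $(a,b)$ with $a>b$; equivalently, that the three excluded pairs $(\mathcal{N},\mathcal{B})$, $(\mathcal{M},\mathcal{B})$, $(\mathcal{M},\mathcal{N})$ are never realized by any graph.

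First I would establish the inequality by a strategy-transfer argument resting on Observation~\ref{obs_r,sr}. The games MBRG and MBSRG are played on the same board $V(G)$ with the same alternation rule, differing only in Maker's target: a resolving set versus a strong resolving set. Since every strong resolving set is a resolving set, any completed play in which Maker's chosen vertices form a strong resolving set is simultaneously a play in which they form a resolving set. Hence a Maker strategy that guarantees a win in the MBSRG M-game (resp.\ B-game) guarantees a win in the MBRG M-game (resp.\ B-game) when replayed move-for-move. Recording each game's outcome as the pair of Boolean values ``Maker wins the M-game'' and ``Maker wins the B-game'', and invoking the observation in Section~\ref{sec_intro} that there is no advantage to the second player (so winning the B-game forces winning the M-game), both Boolean coordinates are monotone nondecreasing when passing from the SR-game to the R-game. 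Running through the three cases $O_{\rm SR}(G)\in\{\mathcal{B},\mathcal{N},\mathcal{M}\}$ then gives $O_{\rm SR}(G)\le O_{\rm R}(G)$ in each case. This strategy-transfer step is the only point requiring care: one must be explicit that the identical sequence of Maker moves producing a strong resolving set also produces a resolving set, and that the transfer is valid separately for the M-game and the B-game so that both coordinates behave monotonically.

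Granting the inequality, I would finish by enumerating the pairs $(a,b)$ with $a,b\in\{\mathcal{B},\mathcal{N},\mathcal{M}\}$ and $a\le b$: for $a=\mathcal{B}$ one obtains $b\in\{\mathcal{B},\mathcal{N},\mathcal{M}\}$, for $a=\mathcal{N}$ one obtains $b\in\{\mathcal{N},\mathcal{M}\}$, and for $a=\mathcal{M}$ one obtains $b=\mathcal{M}$, a total of six pairs coinciding exactly with the displayed set. Consequently the image of the pair-valued function is contained in this set, which is therefore a codomain, as claimed. I expect no deeper obstacle, since the statement asks only for containment; were one instead to demand that all six pairs be attained (i.e.\ that the set be the exact range), the harder task would be exhibiting a witnessing graph for each pair, but that is beyond what the present phrasing requires.
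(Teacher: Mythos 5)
Your proposal is correct and follows essentially the same route as the paper, which derives $O_{\rm SR}(G)\leq O_{\rm R}(G)$ from Observation~\ref{obs_r,sr} together with the rules of the two games and then reads off the six admissible pairs; your write-up merely makes the strategy-transfer and case enumeration explicit where the paper leaves them implicit.
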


We next show that the codomain identified in Corollary~\ref{thm_comparison} is in fact the range of $(O_{\rm SR}(G), O_{\rm R}(G))$.

\begin{proposition}\label{examples}\hfill{}
\begin{itemize}
\item[(a)] There exist graphs $G$ satisfying $O_{\rm SR}(G)=\mathcal{M}=O_{\rm R}(G)$.
\item[(b)] There exist graphs $G$ satisfying $O_{\rm SR}(G)=\mathcal{N}=O_{\rm R}(G)$.
\item[(c)] There exist graphs $G$ satisfying $O_{\rm SR}(G)=\mathcal{B}=O_{\rm R}(G)$.
\item[(d)] There exist graphs $G$ satisfying $O_{\rm SR}(G)=\mathcal{N}$ and $O_{\rm R}(G)=\mathcal{M}$.
\item[(e)] There exist graphs $G$ satisfying $O_{\rm SR}(G)=\mathcal{B}$ and $O_{\rm R}(G)=\mathcal{M}$.
\item[(f)] There exist graphs $G$ satisfying $O_{\rm SR}(G)=\mathcal{B}$ and $O_{\rm R}(G)=\mathcal{N}$.
\end{itemize}
\end{proposition}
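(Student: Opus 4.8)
The plan is to exhibit, for each of the six cases, a concrete graph $G$ (or family) whose strong resolving graph $G_{\rm SR}$ is easy to compute and whose metric-dimension game behavior is controlled by the twin structure of $G$. The key leverage is the dictionary we already have: Corollary~\ref{outcome_sr} reduces $O_{\rm SR}(G)$ entirely to the isomorphism type of $G_{\rm SR}$, while Proposition~\ref{outcome_r} reduces $O_{\rm R}(G)$ to the twin-equivalence-class structure of $G$. So each example becomes a matter of choosing $G$ so that these two invariants land in the prescribed cells, then invoking the stated results rather than replaying any game.

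For parts (a)--(c), which lie on the diagonal, I would reuse the realization graphs from Proposition~\ref{realization}. Taking $G=P_n$ gives $G_{\rm SR}=K_2$, hence $O_{\rm SR}=\mathcal{M}$ by Corollary~\ref{outcome_sr}(a); one checks $P_n$ admits a pairing resolving set, so $O_{\rm R}=\mathcal{M}$ by Proposition~\ref{outcome_r}(a), giving (a). For (c), taking $G=K_n$ with $n\ge 4$ gives $G_{\rm SR}=K_n$, so $O_{\rm SR}=\mathcal{B}$ by Corollary~\ref{outcome_sr}(c); since $K_n$ has a single twin equivalence class of cardinality $n\ge 4$, $O_{\rm R}=\mathcal{B}$ by Proposition~\ref{outcome_r}(d), giving (c). For (b) I want a graph with $G_{\rm SR}\cong K_3$ (forcing $O_{\rm SR}=\mathcal{N}$ via Corollary~\ref{outcome_sr}(b)) that simultaneously falls under one of the $\mathcal{N}$-clauses of Proposition~\ref{outcome_r}; the natural candidate is a graph possessing exactly one twin equivalence class of cardinality $3$ and appropriate metric dimension, invoking Proposition~\ref{outcome_r}(c) with $k=0$. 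The star $K_{1,3}$ (or a suitable tree with $ex=1,\sigma=3$) should serve here, and I would verify its leaf set forms a single size-$3$ twin class with $\dim=2$.

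Parts (d)--(f) are the genuinely new off-diagonal cells, and these are where the construction must be engineered rather than quoted. The governing phenomenon is that a strong resolving set is a resolving set but not conversely, so $G_{\rm SR}$ can force Breaker-favorable or neutral strong-game behavior while the ordinary twin structure remains Maker-favorable or neutral. For (d) I need $O_{\rm SR}=\mathcal{N}$ and $O_{\rm R}=\mathcal{M}$: the strong side wants $G_{\rm SR}$ to contain a $C_3$, $P_3$, $P_4$ or $P_5$ component (Corollary~\ref{outcome_sr}(b)), while the metric side wants a pairing or quasi-pairing resolving set (Proposition~\ref{outcome_r}(a)--(b)). For (e), $O_{\rm SR}=\mathcal{B}$ with $O_{\rm R}=\mathcal{M}$, I need $G_{\rm SR}$ to contain one of the Breaker-forcing subgraphs of Corollary~\ref{outcome_sr}(c) while $G$ still admits a pairing resolving set. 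For (f), $O_{\rm SR}=\mathcal{B}$ with $O_{\rm R}=\mathcal{N}$, the strong side again needs a Breaker-forcing $G_{\rm SR}$ while the metric side needs exactly the neutral twin configuration of Proposition~\ref{outcome_r}(c). Cartesian and corona products are the natural toolbox, since the excerpt advertises that $G_{\rm SR}$ of such products is computable (the proof of Proposition 2.3(a) already exhibits $(P_m\square K_2)_{\rm SR}=2K_2$ and $(C_{2m})_{\rm SR}=mK_2$).

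The main obstacle will be part (e), and to a lesser extent (f): decoupling the two invariants in the same graph. It is easy to make $G_{\rm SR}$ large and Breaker-favorable, but large strong resolving graphs often arise from many twins or high symmetry in $G$, which tends to drag $O_{\rm R}$ down toward $\mathcal{B}$ as well through Proposition~\ref{outcome_r}(d)--(e). The delicate requirement is a graph that is metrically tame---admitting a pairing resolving set, so with twin classes all of size at most $2$ and a clean pairing---yet whose strong resolving graph contains a long path, a large cycle, a $K_m$ with $m\ge 4$, or two triangles. I would search among products and small explicit graphs for one where the mutually-maximally-distant relation is rich (giving a Breaker-forcing $G_{\rm SR}$) while the twin structure stays sparse; verifying both the exact isomorphism type of $G_{\rm SR}$ and the existence of the pairing resolving set for the chosen witness is the computational heart of the argument, and is where I expect to spend the most care.
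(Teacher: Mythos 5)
Your parts (a)--(c) are fine and match the paper (paths, $K_{1,3}$ or $K_3$ via Proposition~\ref{outcome_r}(c) with $k=0$, and stars/complete graphs via Proposition~\ref{outcome_r}(d)). But for (d)--(f) the proposal is a search plan, not a proof: you correctly identify the constraints each witness must satisfy, yet you never exhibit a single graph, and an existence statement is only proved by a witness. The gap is not cosmetic, because the device that resolves your worried ``decoupling'' problem is specific and you did not find it: the paper uses spider trees ($ex(G)=1$ with $\sigma(G)=x$ leaves), not the products/corona graphs you propose as the toolbox. In such a tree the $x$ leaves are pairwise MMD, so $G_{\rm SR}=K_x$ regardless of leg lengths, which makes $O_{\rm SR}$ as Breaker-favorable as desired; meanwhile making $x-1$ legs have length at least $2$ destroys all nontrivial twin classes, and pairing each leaf $\ell_i$ with its support vertex $s_i$ yields a pairing resolving set, giving $O_{\rm R}=\mathcal{M}$. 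This single family settles (d) (with $x=3$, so $G_{\rm SR}=C_3$ and Corollary~\ref{outcome_sr}(b) applies) and (e) (with $x\ge 4$) in a few lines.

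Moreover, your stated route for (f) --- getting $O_{\rm R}=\mathcal{N}$ from ``exactly the neutral twin configuration of Proposition~\ref{outcome_r}(c)'' --- would fail on the natural candidates. The paper's witness for (f) is a spider with three leaves at distance $1$ from the center (forming one twin class of size $3$) and $x-3$ legs of length at least $2$: there $k=0$ size-$2$ twin classes but $\dim(G)=x-1\ge 3\neq k+2$, so Proposition~\ref{outcome_r}(c) simply does not apply, and it is far from clear any graph satisfies its hypotheses while also having a Breaker-forcing $G_{\rm SR}$. The paper instead argues $O_{\rm R}=\mathcal{N}$ by hand: it builds an explicit quasi-pairing resolving set $(\cup_{i=1}^{x-3}\{\{s_i,\ell_i\}\})\cup\{\{\ell_{x-2},\ell_{x-1}\}\}$ with auxiliary vertex $\ell_x$ to get $O_{\rm R}\in\{\mathcal{M},\mathcal{N}\}$ via Proposition~\ref{outcome_r}(b), then gives Breaker a second-player strategy (grab two of the three twin leaves, using Observation~\ref{obs_twin}(a)) to rule out $\mathcal{M}$. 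So for (f) you are missing both the witness and the correct mechanism for the $O_{\rm R}$ side.
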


\begin{proof}
(a) Let $G$ be an $n$-path given by $u_1, u_2, \ldots, u_n$, where $n\ge 2$. Then $G_{\rm SR}= K_2$ and $\{\{u_1, u_n\}\}$ is a pairing (strong) resolving set of $G$. Thus, $O_{\rm SR}(G)=\mathcal{M}=O_{\rm R}(G)$ by Corollary~\ref{outcome_sr}(a) and Proposition~\ref{outcome_r}(a).

(b) Let $G\in\{K_3, K_{1,3}\}$. Then $G_{\rm SR}= C_3$ and $G$ has exactly one twin equivalence class of cardinality $3$ with $\dim(G)=2$. So, $O_{\rm SR}(G)=\mathcal{N}=O_{\rm R}(G)$ by Corollary~\ref{outcome_sr}(b) and Proposition~\ref{outcome_r}(c).

(c) Let $G$ be the star $K_{1,x}$ on ($x+1$) vertices, where $x\ge4$. Then $G_{\rm SR} = K_x$ and the leaves of $G$ form a twin equivalence class of cardinality $x$, where $x\ge4$. So, $O_{\rm SR}(G)=\mathcal{B}=O_{\rm R}(G)$ by Corollary~\ref{outcome_sr}(c) and Proposition~\ref{outcome_r}(d).

(d) Let $G$ be a tree with $ex(G)=1$ and $\sigma(G)=3$ such that $\ell_1, \ell_2, \ell_3$ are the terminal vertices of the exterior major vertex $v$ in $G$ with $d(v, \ell_1)\ge 2$, $d(v, \ell_2)\ge 2$ and $d(v,\ell_3)\ge1$. For each $i\in[2]$, let $s_i$ be the support vertex lying on the $v-\ell_i$ path. Then $G_{\rm SR}= C_3$ and $\{\{s_1, \ell_1\}, \{s_2, \ell_2\}\}$ is a pairing resolving set of $G$. So, $O_{\rm SR}(G)=\mathcal{N}$ by Corollary~\ref{outcome_sr}(b) and $O_{\rm R}(G)=\mathcal{M}$ by  Proposition~\ref{outcome_r}(a).

(e) Let $G$ be a tree with $ex(G)=1$ and $\sigma(G)=x\ge4$. Let $v$ be the exterior major vertex and $\ell_1, \ell_2, \ldots, \ell_x$ be the terminal vertices of $v$ in $G$ such that $d(v, \ell_x)\ge1$ and $d(v, \ell_i)\ge 2$ for each $i\in[x-1]$. For each $i\in[x-1]$, let $s_i$ be the support vertex lying on the $v-\ell_i$ path. Since $G_{\rm SR}= K_x \supseteq K_4$, $O_{\rm SR}(G)=\mathcal{B}$ by Corollary~\ref{outcome_sr}(c). Since $\cup_{i=1}^{x-1}\{\{s_i, \ell_i\}\}$ is a pairing resolving set of $G$, $O_{\rm R}(G)=\mathcal{M}$ by Proposition~\ref{outcome_r}(a).

(f) Let $G$ be a tree with $ex(G)=1$ and $\sigma(G)=x\ge4$. Let $v$ be the exterior major vertex and $\ell_1, \ell_2, \ldots, \ell_x$ be the terminal vertices of $v$ in $G$ such that $d(v, \ell_{x-2})=d(v, \ell_{x-1})=d(v, \ell_x)=1$ and $d(v, \ell_i)\ge 2$ for each $i\in[x-3]$. For each $i\in[x-3]$, let $s_i$ be the support vertex lying on the $v-\ell_i$ path in $G$. Since $G_{\rm SR}=K_x \supseteq K_4$, $O_{\rm SR}(G)=\mathcal{B}$ by Corollary~\ref{outcome_sr}(c). Next, we show that $O_{\rm R}(G)=\mathcal{N}$. Note that, for any resolving set $W$ of $G$, $|W \cap \{\ell_{x-2},\ell_{x-1}, \ell_{x}\}|\ge 2$ by Observation~\ref{obs_twin}(a). Let $X=(\cup_{i=1}^{x-3}\{\{s_i, \ell_i\}\}) \cup \{\{\ell_{x-2}, \ell_{x-1}\}\}$, and let $Z \subseteq V(G)$ with $|Z|=x-2$ such that $Z \cap \{\ell_{x-2}, \ell_{x-1}\} \neq\emptyset$ and $Z \cap \{s_i, \ell_i\} \neq\emptyset$ for each $i\in[x-3]$. Since $\{\ell_x\} \cup Z$ forms a minimum resolving set of $G$, $X$ is a quasi-pairing resolving set of $G$. So, $O_{\rm R}(G) \in \{\mathcal{M}, \mathcal{N}\}$ by Proposition~\ref{outcome_r}(b). In the B-game, Breaker can select two vertices of $\{\ell_{x-2},\ell_{x-1}, \ell_{x}\}$, and thus preventing Maker from occupying vertices that form a resolving set of $G$. Thus, $O_{\rm R}(G)=\mathcal{N}$.~\hfill
\end{proof}


\section{Some graph classes}\label{sec_graphs}

In this section, we examine the outcome of MBSRG played on some graph classes such as trees, cycles, the Petersen graph, complete multipartite graphs, corona product graphs, Cartesian product graphs, and modular product graphs.


\subsection{Trees, Cycles, Petersen Graph, Complete Multipartite Graphs}

We begin with the following observation for MBSRG that is analogous to the result obtained in~\cite{mbrg} for MBRG. We provide its proof to be self-contained.

\begin{observation}\label{obs_Nvertices}
Let $G$ be a connected graph of order $n\ge2$.
\begin{itemize}
\item[(a)] If $O_{\rm SR}(G)=\mathcal{M}$, then $\sdim(G) \le \lfloor\frac{n}{2}\rfloor$.
\item[(b)] If $\sdim(G)\ge \lceil\frac{n}{2}\rceil+1$, then $O_{\rm SR}(G)=\mathcal{B}$.
\end{itemize}
\end{observation}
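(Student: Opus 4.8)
The plan is to prove both parts by a single counting argument that compares the number of vertices Maker can ever acquire against $\sdim(G)$, the minimum cardinality of a strong resolving set. The one fact I would record at the outset is the number of moves each player makes. Since the two players alternate selecting distinct vertices from the $n$-element set $V(G)$ (never skipping a turn), in the M-game (Maker first) Maker makes exactly $\lceil \frac{n}{2}\rceil$ selections and Breaker makes $\lfloor \frac{n}{2}\rfloor$, while in the B-game (Breaker first) the roles are reversed, so Maker makes exactly $\lfloor \frac{n}{2}\rfloor$ selections. Thus across both games Maker can hold at most $\lceil \frac{n}{2}\rceil$ vertices, and in the B-game at most $\lfloor \frac{n}{2}\rfloor$.

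For part (a), I would argue via the B-game. Assume $O_{\rm SR}(G)=\mathcal{M}$; then in particular Maker has a winning strategy in the B-game, so by following it he eventually owns a set $S$ of vertices forming a strong resolving set of $G$. By the move count above, $|S|\le \lfloor \frac{n}{2}\rfloor$, and since every strong resolving set of $G$ has at least $\sdim(G)$ vertices, we conclude $\sdim(G)\le |S|\le \lfloor \frac{n}{2}\rfloor$.

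For part (b), I would show that Maker simply lacks the material to win either game. For Maker to win a game, the set of vertices he holds at some point must be a strong resolving set, and hence must have cardinality at least $\sdim(G)$. But the move count bounds the size of Maker's holdings by $\lceil \frac{n}{2}\rceil$ in the M-game and by $\lfloor \frac{n}{2}\rfloor\le \lceil \frac{n}{2}\rceil$ in the B-game. Under the hypothesis $\sdim(G)\ge \lceil \frac{n}{2}\rceil+1$, both of these bounds are strictly smaller than $\sdim(G)$, so Maker can never assemble a strong resolving set in either game. Breaker therefore wins both the M-game and the B-game, giving $O_{\rm SR}(G)=\mathcal{B}$.

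I do not anticipate any genuine obstacle here: the entire content lies in the bookkeeping of how many of the $n$ vertices each player receives. The only points requiring care are stating the move counts correctly for both parities of $n$ and noting that a strong resolving set cannot live inside Maker's undersized holdings. In fact, part (a) is essentially the contrapositive of the B-game half of part (b), so the two statements are two faces of the same counting inequality.
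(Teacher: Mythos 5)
Your proof is correct and follows essentially the same counting argument as the paper: bounding Maker's holdings by $\lfloor\frac{n}{2}\rfloor$ in the B-game for part (a) and by $\lceil\frac{n}{2}\rceil$ in either game for part (b), then comparing against $\sdim(G)$. The paper's proof is just a terser version of your bookkeeping.
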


\begin{proof}
Let $G$ be a connected graph of order $n\ge2$.

(a) Suppose $O_{\rm SR}(G)=\mathcal{M}$. In the B-game, Maker can select at most $\lfloor\frac{n}{2}\rfloor$ vertices in the course of MBSRG. Since Maker must occupy at least $\sdim(G)$ vertices, it follows that $\sdim(G) \le \lfloor\frac{n}{2}\rfloor$.

(b) Suppose $\sdim(G)\ge \lceil\frac{n}{2}\rceil+1$. Since Maker can occupy at most $\lceil\frac{n}{2}\rceil$ vertices in the course of MBSRG, Maker fails to occupy vertices that form a strong resolving set of $G$. Thus, $O_{\rm SR}(G)=\mathcal{B}$.~\hfill
\end{proof}

Next, we recall the strong metric dimension of trees, cycles, the Petersen graph and complete multi-partite graphs.

\begin{proposition}\label{sdim_graph}\hfill{}
\begin{itemize}
\item[(a)] \emph{\cite{sebo}} For any tree $T$ of order at least two, $\sdim(T)=\sigma(T)-1$.
\item[(b)] \emph{\cite{sebo}} For $n \ge 3$, $\sdim(C_n)=\lceil\frac{n}{2}\rceil$.
\item[(c)] \emph{\cite{yi_petersen}} For the Petersen graph $\mathcal{P}$, $\sdim(\mathcal{P})=8$.
\item[(d)] \emph{\cite{yi_petersen}} For $k \ge 2$, let $G=K_{a_1,\ldots, a_k}$ be a complete $k$-partite graph of order $n=\sum_{i=1}^{k}a_i \ge 3$. Let $s$ be the number of partite sets of $G$ consisting of exactly one element. Then
\begin{equation*}
\sdim(G)=\left\{
\begin{array}{ll}
n-k & \mbox{ if }s=0,\\
n-k+s-1 & \mbox{ if }s\neq 0.
\end{array}\right.
\end{equation*}
\end{itemize}
\end{proposition}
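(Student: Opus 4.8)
The unifying tool is Theorem~\ref{thm_mmd}, which reduces each of the four computations to identifying the strong resolving graph $G_{\rm SR}$ and then finding its vertex cover number $\tau(G_{\rm SR})$. So the plan is, for each class, to determine exactly which pairs of vertices are mutually maximally distant, read off $G_{\rm SR}$, and compute $\tau$.

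For (a), I would first show that in a tree the only candidates for MMD vertices are the leaves: a vertex $u$ of degree at least two always has a neighbor lying off the unique $u$--$v$ path for any $v\neq u$, and that neighbor is strictly farther from $v$, so $u$ is not maximally distant from $v$. Conversely, any two leaves are MMD, since the sole neighbor of a leaf lies on the path toward the other leaf and is therefore closer to it. Hence $T_{\rm SR}=K_{\sigma(T)}$, and $\tau(K_{\sigma(T)})=\sigma(T)-1$, giving $\sdim(T)=\sigma(T)-1$.

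For (b), each vertex of $C_n$ is MMD precisely with its antipodal vertex or vertices. When $n$ is even this yields a perfect matching, $(C_n)_{\rm SR}=\frac{n}{2}K_2$, with $\tau=\frac{n}{2}$; when $n$ is odd each vertex is MMD with the two vertices at distance $\frac{n-1}{2}$, giving $(C_n)_{\rm SR}=C_n$, with $\tau=\lceil n/2\rceil$. Both cases yield $\sdim(C_n)=\lceil n/2\rceil$. For (c), the Petersen graph $\mathcal{P}$ is distance-regular of diameter two, so any two nonadjacent vertices are at the maximal distance and hence automatically MMD, while adjacent vertices are not (triangle-freeness forces every non-$v$ neighbor of $u$ to sit at distance two from an adjacent $v$). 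Thus $\mathcal{P}_{\rm SR}=\overline{\mathcal{P}}$, and using Gallai's identity $\tau=n-\alpha$ together with $\alpha(\overline{\mathcal{P}})=\omega(\mathcal{P})=2$, I obtain $\tau(\overline{\mathcal{P}})=10-2=8$.

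Part (d) is where the real work lies, and I expect it to be the main obstacle. Since $\diam(K_{a_1,\dots,a_k})=2$ for $k\ge2$, I would run a case analysis on the positions of two vertices $u\in A_i$ and $v\in A_j$, verifying: (i) two vertices in a common part of size at least two are MMD (their distance $2$ equals the diameter); (ii) two vertices in distinct parts, at least one of size at least two, are never MMD, because the larger part supplies a neighbor realizing a strictly greater distance; and (iii) two vertices lying in distinct singleton parts are MMD. It follows that $G_{\rm SR}$ is the disjoint union of a clique $K_{a_i}$ for each part with $a_i\ge2$ together with a clique $K_s$ on the $s$ singleton vertices (which degenerates to an isolated, hence omitted, vertex when $s=1$). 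Since $\tau$ of a disjoint union of cliques is the sum of $(\text{size}-1)$ over the components, and $\sum_{a_i\ge2}(a_i-1)=n-k$, I get $\tau(G_{\rm SR})=n-k$ when $s=0$ and $\tau(G_{\rm SR})=(n-k)+(s-1)=n-k+s-1$ when $s\ge1$, matching the stated formula via Theorem~\ref{thm_mmd}. The subtle points to get right are the correct handling of singleton parts---both that distinct singletons are mutually MMD while singletons never pair with vertices of a larger part, and that a lone singleton contributes nothing to $G_{\rm SR}$---and the bookkeeping identity $\sum_{a_i\ge2}(a_i-1)=n-k$.
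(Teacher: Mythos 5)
The paper offers no proof of Proposition~\ref{sdim_graph} to compare against: all four items are quoted results, cited to \cite{sebo} and \cite{yi_petersen}. Your blind derivation, which reduces everything to computing $G_{\rm SR}$ and applying Theorem~\ref{thm_mmd}, is correct and self-contained, and the strong resolving graphs you obtain match the ones the paper itself asserts without proof elsewhere ($T_{\rm SR}=K_{\sigma(T)}$, $(C_{2m})_{\rm SR}=mK_2$, $(C_{2m-1})_{\rm SR}=C_{2m-1}$ in the Cartesian product subsection, and $(K_{1,x})_{\rm SR}=K_x$ in Proposition~\ref{examples}(c)). The individual steps check out: internal vertices of a tree are maximally distant from no vertex while leaves are maximally distant from every vertex, so $T_{\rm SR}=K_{\sigma(T)}$; antipodal analysis on cycles gives $mK_2$ (even) and $C_n$ (odd) with the right vertex cover numbers; and for the Petersen graph, diameter $2$ makes nonadjacent pairs automatically MMD while triangle-freeness kills adjacent pairs, so $\mathcal{P}_{\rm SR}=\overline{\mathcal{P}}$ and Gallai's identity gives $\tau(\overline{\mathcal{P}})=10-\omega(\mathcal{P})=8$ (the appeal to distance-regularity is superfluous; diameter $2$ and triangle-freeness are all you use). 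What your write-up buys over the paper's citations is a uniform, elementary treatment of all four classes through the single lens $\sdim=\tau(G_{\rm SR})$.

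One small inaccuracy in (d), worth fixing but not fatal: $\diam(K_{a_1,\ldots,a_k})=2$ is false when every part is a singleton, in which case $G=K_n$ has diameter $1$. Your argument survives because case (i) only arises when some part has size at least $2$, which does force diameter $2$, and case (iii) holds by direct verification independent of the diameter: if $A_i=\{u\}$ and $A_j=\{v\}$ are singletons, then every neighbor $w\neq v$ of $u$ lies outside $A_j$ and hence satisfies $d(w,v)=1=d(u,v)$, so $u$ and $v$ are MMD. With that repair, your component decomposition $G_{\rm SR}=\bigl(\bigcup_{a_i\ge2}K_{a_i}\bigr)\cup K_s$ (with the lone singleton omitted when $s=1$), the identity $\sum_{a_i\ge2}(a_i-1)=n-k$, and the $\max(s-1,0)$ contribution of the singleton clique yield exactly the stated formula, including the degenerate agreement of the two branches at $s=1$.
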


Next, we determine $O_{\rm SR}(G)$ when $G$ is a tree, a cycle, the Petersen graph and a complete multipartite graph, respectively.

\begin{proposition}\hfill{}\label{mix_graph_outcome}
\begin{itemize}
\item[(a)] For any tree $T$ of order at least two,
\begin{equation*}
O_{\rm SR}(T)=\left\{
\begin{array}{ll}
\mathcal{M} & \mbox{ if $\sigma(T)=2$},\\
\mathcal{N} & \mbox{ if $\sigma(T)=3$},\\
\mathcal{B} & \mbox{ if $\sigma(T)\ge4$}.
\end{array}
\right.
\end{equation*}

\item[(b)] For $n\ge 3$,
\begin{equation*}
O_{\rm SR}(C_n)=\left\{
\begin{array}{ll}
\mathcal{M} & \mbox{ if $n$ is even},\\
\mathcal{N} & \mbox{ if $n=3$},\\
\mathcal{B} & \mbox{ if $n\ge 5$ and $n$ is odd}.
\end{array}
\right.
\end{equation*}

\item[(c)] For the Petersen graph $\mathcal{P}$, $O_{\rm SR}(\mathcal{P})=\mathcal{B}$.

\item[(d)] For $m \ge 2$, let $G=K_{a_1,\ldots, a_m}$ be a complete $m$-partite graph of order $n=\sum_{i=1}^{m}a_i \ge 3$. Let $s$ be the number of partite sets of $G$ consisting of exactly one element. Then
\begin{equation*}
O_{\rm SR}(G)=\left\{
\begin{array}{ll}
\mathcal{B} & \mbox{ if } s\ge4 \mbox{ or } a_i\ge 4 \mbox{ for some } i\in[m],\\
{}                 & \mbox{ if } s=a_i=3 \mbox{ for some }i\in[m],\\
{}                 & \mbox{ if } a_i=a_j=3 \mbox{ for distinct } i,j\in[m],\\
\mathcal{N} & \mbox{ if } s=3 \mbox{ and } a_i\le 2 \mbox{ for each } i\in[m],\\
{}                 & \mbox{ if } s\le 2, a_i= 3 \mbox{ for exactly one }i\in[m], \mbox{ and } a_j\le2 \mbox{ for each }j\in[m]-\{i\},\\
\mathcal{M} & \mbox{ if } \max\{s, a_i\}\le 2 \mbox{ for each }i\in[m].
\end{array}
\right.
\end{equation*}
\end{itemize}
\end{proposition}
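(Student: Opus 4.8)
The plan is to handle parts (a), (b), and (d) by first computing the strong resolving graph $G_{\rm SR}$ explicitly for each family and then reading off the outcome from Corollary~\ref{outcome_sr}, while part (c) is most cleanly settled through the order-versus-dimension bound of Observation~\ref{obs_Nvertices}. Throughout, the engine is Theorem~\ref{thm_mmd} together with Corollary~\ref{outcome_sr}: once $G_{\rm SR}$ is known to be a disjoint union of cliques, paths, or cycles, the trichotomy $\{\mathcal{M},\mathcal{N},\mathcal{B}\}$ is determined by whether the components are all copies of $K_2$ (giving $\mathcal{M}$), a single short odd piece together with $K_2$'s (giving $\mathcal{N}$), or contain a forbidden configuration such as $K_4$ or $2C_3$ (giving $\mathcal{B}$). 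A useful sanity check at every stage is that $\tau(G_{\rm SR})$ must reproduce the value of $\sdim$ recorded in Proposition~\ref{sdim_graph}.

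For part (a), I would observe that in a tree every leaf is maximally distant from every vertex, that any two leaves are mutually maximally distant, and that no non-leaf lies in $V(T_{\rm SR})$. Hence $T_{\rm SR}=K_{\sigma(T)}$, which gives $\tau(T_{\rm SR})=\sigma(T)-1$, consistent with Proposition~\ref{sdim_graph}(a). Then $\sigma(T)=2$ yields $T_{\rm SR}=K_2$ and outcome $\mathcal{M}$ by Corollary~\ref{outcome_sr}(a); $\sigma(T)=3$ yields $T_{\rm SR}=C_3$ and outcome $\mathcal{N}$ by Corollary~\ref{outcome_sr}(b); and $\sigma(T)\ge4$ yields $T_{\rm SR}\supseteq K_4$ and outcome $\mathcal{B}$ by Corollary~\ref{outcome_sr}(c). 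For part (b), a short antipodality computation on $C_n$ shows that a vertex is maximally distant only from the vertex or vertices realizing the diameter: for even $n=2m$ each vertex is MMD with exactly its antipode, so $(C_n)_{\rm SR}=mK_2$ and the outcome is $\mathcal{M}$; for odd $n=2m+1$ each vertex is MMD with the two vertices at distance $m$, giving the circulant $C_{2m+1}(m)$, which (since $\gcd(m,2m+1)=1$) is a single cycle $C_n$. Thus $(C_3)_{\rm SR}=C_3$ gives $\mathcal{N}$ by Corollary~\ref{outcome_sr}(b), while for odd $n\ge5$ we have $(C_n)_{\rm SR}=C_n\supseteq C_m$ with $m\ge5$, giving $\mathcal{B}$ by Corollary~\ref{outcome_sr}(c).

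Part (c) is immediate: the Petersen graph $\mathcal{P}$ has order $10$ and $\sdim(\mathcal{P})=8$ by Proposition~\ref{sdim_graph}(c), and since $8\ge\lceil 10/2\rceil+1=6$, Observation~\ref{obs_Nvertices}(b) forces $O_{\rm SR}(\mathcal{P})=\mathcal{B}$. For part (d), the heart of the argument is to determine $G_{\rm SR}$ for $G=K_{a_1,\ldots,a_m}$. Since $\diam(G)\le2$, I would check the MMD relation directly: two vertices in a common partite set are always MMD (each is at distance $2$ from the other but at distance $1$ from every one of its neighbors), whereas two vertices in different partite sets $A_i,A_j$ are MMD if and only if $a_i=a_j=1$, since a neighbor lying in the other class at distance $2$ destroys maximal distance whenever that class has size at least $2$. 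Consequently $G_{\rm SR}$ is the disjoint union of one clique $K_s$ on the $s$ singleton classes (vacuous when $s\le1$) together with one clique $K_{a_i}$ for each class with $a_i\ge2$; a quick count confirms that $\tau(G_{\rm SR})$ equals the value in Proposition~\ref{sdim_graph}(d).

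With this description in hand, the six listed cases become a bookkeeping exercise in Corollary~\ref{outcome_sr}. A component $K_s$ with $s\ge4$ or a class $K_{a_i}$ with $a_i\ge4$ gives $G_{\rm SR}\supseteq K_4$, hence $\mathcal{B}$; the presence of two triangle components (from $s=a_i=3$, or from $a_i=a_j=3$) gives $G_{\rm SR}\supseteq 2C_3$, again $\mathcal{B}$, via Corollary~\ref{outcome_sr}(c). When exactly one triangle appears---either $s=3$ with all $a_i\le2$, or a single $a_i=3$ with $s\le2$ and all other classes of size at most $2$---every remaining component is a $K_2$, so $G_{\rm SR}=C_3\cup xK_2$ and the outcome is $\mathcal{N}$ by Corollary~\ref{outcome_sr}(b). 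Finally, if $\max\{s,a_i\}\le2$ for all $i$, then every component is a $K_2$ (and at least one exists since $n\ge3$), so $G_{\rm SR}=xK_2$ and the outcome is $\mathcal{M}$ by Corollary~\ref{outcome_sr}(a). The main obstacle I anticipate is the careful verification of the MMD relation in the multipartite case together with the edge-case bookkeeping---especially confirming that singleton classes never attach to non-singleton cliques and that the singleton clique $K_s$ collapses correctly when $s\le1$---since everything downstream relies on getting this component structure exactly right.
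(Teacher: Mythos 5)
Your proof is correct and follows essentially the same route as the paper's: determine the strong resolving graph in each case and read off the outcome from Corollary~\ref{outcome_sr}, with Observation~\ref{obs_Nvertices}(b) settling the Petersen graph via $\sdim(\mathcal{P})=8\ge\lceil 10/2\rceil+1$. The only difference is that you explicitly verify the MMD computations (e.g., $T_{\rm SR}=K_{\sigma(T)}$, the antipodal structure of $(C_n)_{\rm SR}$, and the disjoint-clique structure of $(K_{a_1,\ldots,a_m})_{\rm SR}$) that the paper asserts without proof, which is a sound filling-in of detail rather than a different argument.
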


\begin{proof}
(a) Let $T$ be a tree of order at least two with $\sigma(T)=\sigma$; then $T_{\rm SR}=K_{\sigma}$ with $\sigma\ge2$. If $\sigma=2$, then $T_{\rm SR}=K_2$ and $O_{\rm SR}(T)=\mathcal{M}$ by Corollary~\ref{outcome_sr}(a). If $\sigma=3$, then $T_{\rm SR} = K_3$ and $O_{\rm SR}(T)=\mathcal{N}$ by Corollary~\ref{outcome_sr}(b). If $\sigma\ge 4$, then $T_{\rm SR} =K_{\sigma}\supseteq K_4$; thus, $O_{\rm SR}(T)=\mathcal{B}$ by Corollary~\ref{outcome_sr}(c).

(b) Let $n\ge 3$. First, suppose $n$ is even; then $\sdim(C_n)=\frac{n}{2}$ by Proposition~\ref{sdim_graph}(b). Since $(C_n)_{\rm SR} \cong \frac{n}{2}K_2$, we have $O_{\rm SR}(C_n)=\mathcal{M}$ by Corollary~\ref{outcome_sr}(a).

Second, suppose $n$ is odd; then $\sdim(C_n)=\frac{n+1}{2}$ by Proposition~\ref{sdim_graph}(b) and $(C_n)_{\rm SR} = C_n$. If $n=3$, then $O_{\rm SR}(C_3)=\mathcal{N}$ by Corollary~\ref{outcome_sr}(b). If $n\ge5$, then $O_{\rm SR}(C_n)=\mathcal{B}$ by Corollary~\ref{outcome_sr}(c).

(c) For the Petersen graph  $\mathcal{P}$, we have $\sdim(\mathcal{P})=8>6=\lceil\frac{|V(\mathcal{P})|}{2}\rceil+1$ by Proposition~\ref{sdim_graph}(c). By Observation~\ref{obs_Nvertices}(b), $O_{\rm SR}(\mathcal{P})=\mathcal{B}$.

(d) Let $G$ be a complete $m$-partite graph described in the statement of this proposition. First, suppose $s\ge4$ or $a_i\ge 4$ for some $i\in [m]$. Then $G_{\rm SR} \supseteq K_4$ and $O_{\rm SR}(G)=\mathcal{B}$ by Corollary~\ref{outcome_sr}(c).

Second, suppose $\max\{s, a_i\}= 3$ for each $i\in[m]$. If $s=a_i=3$ for some $i\in[m]$ or $a_i=a_j=3$ for distinct $i,j\in[m]$, then $G_{\rm SR} \supseteq 2C_3$ and $O_{\rm SR}(G)=\mathcal{B}$ by Corollary~\ref{outcome_sr}(c). If $s=3$ and $a_i \le2$ for each $i\in[m]$, then $G_{\rm SR} = C_3 \cup (m-3)K_2$ and $O_{\rm SR}(G)=\mathcal{N}$ by Corollary~\ref{outcome_sr}(b). If $s\in\{1,2\}$ and $a_i=3$ for exactly one $i\in[m]$, then $G_{\rm SR} = C_3 \cup (m-2)K_2$ and $O_{\rm SR}(G)=\mathcal{N}$ by Corollary~\ref{outcome_sr}(b). If $s=0$ and $a_i=3$ for exactly one $i\in[m]$, then $G_{\rm SR} = C_3 \cup (m-1)K_2$ and $O_{\rm SR}(G)=\mathcal{N}$ by Corollary~\ref{outcome_sr}(b).

Third, suppose $\max\{s, a_i\} \le 2$ for each $i\in[m]$. If $s=0$, then $G_{\rm SR}=m K_2$. If $s\in\{1,2\}$, then $G_{\rm SR}= (m-1)K_2$. In each case, $O_{\rm SR}(G)=\mathcal{M}$ by Corollary~\ref{outcome_sr}(a).~\hfill
\end{proof}


\subsection{Corona Product Graphs}

Let $G$ and $H$ be two graphs of order $n$ and $m$, respectively, and let $V(G)=\{u_1, u_2, \dots, u_n\}$. The \emph{corona product} $G \odot H$ is obtained from $G$ and $n$ copies of $H$, say $H_1, H_2, \ldots, H_n$, by adding an edge from $u_i$ to every vertex of $H_i$ for each $i\in[n]$. The \emph{join} of two graphs $G$ and $H$, denoted by $G+H$, is the graph obtained from the disjoint union of $G$ and $H$ by joining an edge between each vertex of $G$ and each vertex of $H$. We note that $K_1 \odot H=K_1+H$. For the strong metric dimension of corona product graphs, see~\cite{sdim_corona}; for the fractional strong metric dimension of corona product graphs, see~\cite{sdimF_products}. We first consider $O_{\rm SR}(G \odot H)$ when $G$ is a connected graph of order at least two.

\begin{proposition}
Let $G$ be a connected graph of order $n\ge 2$, and let $H$ be a graph of order $m\ge1$.
\begin{itemize}
\item[(a)] If $n\ge4$, then $O_{\rm SR} (G\odot H)=\mathcal{B}$.
\item[(b)] If $n=3$, then $O_{\rm SR}(G\odot H)=\left\{
\begin{array}{ll}
\mathcal{N} & \mbox{ if }m=1,\\
\mathcal{B} & \mbox{ if }m\ge2.
\end{array}
\right.$
\item[(c)] If $n=2$, then $O_{\rm SR}(G\odot H)=\left\{
\begin{array}{ll}
\mathcal{M} & \mbox{ if }m=1,\\
\mathcal{B} & \mbox{ if }m\ge2.
\end{array}
\right.$
\end{itemize}
\end{proposition}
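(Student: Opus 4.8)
The plan is to reduce the whole statement to the structure of the strong resolving graph $(G\odot H)_{\rm SR}$ and then read off each outcome from Corollary~\ref{outcome_sr}. The engine of the argument is a single structural fact: whenever $n\ge 2$ and $m\ge 1$, any two vertices lying in \emph{different} copies $H_i$ and $H_j$ are mutually maximally distant in $G\odot H$. Indeed, fix $x\in V(H_i)$ and $y\in V(H_j)$ with $i\neq j$. To leave the copy $H_i$ one must pass through $u_i$, and to enter $H_j$ one must pass through $u_j$, so $d(x,y)=d_G(u_i,u_j)+2$; meanwhile each neighbor of $x$ is either another vertex of $H_i$ (still at distance $d_G(u_i,u_j)+2$ from $y$) or the vertex $u_i$ (at distance $d_G(u_i,u_j)+1<d(x,y)$). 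Hence $x$ is maximally distant from $y$, and by symmetry $y$ from $x$, so $x$ and $y$ are MMD. The same bookkeeping shows that no $u_i\in V(G)$ ever lies in $V((G\odot H)_{\rm SR})$: since $u_i$ carries the nonempty copy $H_i$ and (using $n\ge 2$ and connectivity) has a $G$-neighbor, $u_i$ always has a neighbor strictly farther from any candidate partner, so $u_i$ is maximally distant from no vertex. Consequently $V((G\odot H)_{\rm SR})=\bigcup_i V(H_i)$, and $(G\odot H)_{\rm SR}$ contains the complete $n$-partite graph on the parts $V(H_1),\dots,V(H_n)$ as a spanning subgraph; extra edges may appear inside a part depending on $H$, but these are never needed.

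With this in hand the three $\mathcal{B}$-outcomes follow by exhibiting a forbidden subgraph built only from cross-copy MMD pairs (which are insensitive to the internal structure of $H$). For part (a), since $n\ge 4$ we pick one vertex from each of four distinct copies; these are pairwise MMD and induce a $K_4$ in $(G\odot H)_{\rm SR}$, so $O_{\rm SR}(G\odot H)=\mathcal{B}$ by Corollary~\ref{outcome_sr}(c). For part (b) with $m\ge 2$, the three copies contribute $K_{2,2,2}$, which contains two vertex-disjoint triangles, i.e.\ $2C_3$; again Corollary~\ref{outcome_sr}(c) gives $\mathcal{B}$. For part (c) with $m\ge 2$, the two copies contribute $K_{2,2}=C_4$, and Corollary~\ref{outcome_sr}(c) gives $\mathcal{B}$.

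It remains to handle $m=1$, where each copy is a single pendant vertex and the structural lemma pins down $(G\odot H)_{\rm SR}$ \emph{exactly} rather than up to a subgraph: there are no within-part edges, so $(G\odot K_1)_{\rm SR}=K_n$, the $n$ pendant leaves being pairwise MMD with nothing else surviving. For $n=3$ this is $K_3=C_3$, whence $O_{\rm SR}(G\odot H)=\mathcal{N}$ by Corollary~\ref{outcome_sr}(b); for $n=2$ this is $K_2$ (indeed $G\odot K_1=P_4$), whence $O_{\rm SR}(G\odot H)=\mathcal{M}$ by Corollary~\ref{outcome_sr}(a). This disposes of every case.

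The step I expect to require the most care is the structural lemma, and specifically the exactness needed in the two $m=1$ subcases. The subgraph-containment arguments for the $\mathcal{B}$ outcomes are robust because they use only cross-copy MMD pairs, but certifying $\mathcal{N}$ and $\mathcal{M}$ forces me to rule out any \emph{additional} vertices or edges in $(G\odot H)_{\rm SR}$. Thus the honest work is the verification that no $u_i$ is ever maximally distant and that no spurious MMD pair arises; once the distance computations are pinned down, everything else is a direct appeal to Corollary~\ref{outcome_sr}.
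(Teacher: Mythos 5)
Your proposal is correct and follows essentially the same route as the paper: both arguments rest on the observation that vertices in distinct copies of $H$ are mutually maximally distant, yielding $K_4$ (for $n\ge 4$), $2C_3$ (for $n=3$, $m\ge 2$), and $C_4$ (for $n=2$, $m\ge 2$) inside $(G\odot H)_{\rm SR}$, together with the exact identifications $(G\odot K_1)_{\rm SR}=C_3$ and $K_2$ in the $m=1$ cases, all concluded via Corollary~\ref{outcome_sr}. The only difference is one of packaging: you prove a single structural lemma (the complete $n$-partite spanning subgraph on $\bigcup_i V(H_i)$, plus the explicit verification that no $u_i$ is ever maximally distant) where the paper asserts the needed MMD relations case by case, and your verification of exactness for $m=1$ is a welcome detail the paper leaves implicit.
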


\begin{proof}
Let $G$ be a connected graph of order $n\ge2$ and $H$ be a graph of order $m\ge1$. Let $V(G)=\{u_1,u_2, \ldots, u_n\}$ and let $H_1, \ldots, H_n$ be copies of $H$. For each $i\in[n]$, let $V(H_i)=\{w_{i,1},  \ldots, w_{i,m} \}$, where the vertex $w_{s,x}$ in $H_s$ corresponds to the vertex $w_{t,x}$ in $H_t$ for distinct $s,t\in[n]$ and $x\in[m]$. Since $n\ge2$, we have $V(G) \cap V((G\odot H)_{\rm SR})=\emptyset$.

(a) Let $n\ge4$. For distinct $i,j\in[n]$, we have $w_{i,1}$ MMD $w_{j,1}$ in $G \odot H$. So, $(G \odot H)_{\rm SR} \supseteq K_4$ and $O_{\rm SR} (G\odot H)=\mathcal{B}$ by Corollary~\ref{outcome_sr}(c).

(b) Let $n=3$. If $m=1$, then $(G \odot H)_{\rm SR} = C_3$ and $O_{\rm SR} (G\odot H)=\mathcal{N}$ by Corollary~\ref{outcome_sr}(b). If $m\ge2$, we have $w_{i,1}$ MMD $w_{j,1}$ and $w_{i,2}$ MMD $w_{j,2}$ in $G \odot H$ for any distinct $i,j\in[3]$; thus, $(G \odot H)_{\rm SR} \supseteq 2C_3$ and $O_{\rm SR} (G\odot H)=\mathcal{B}$ by Corollary~\ref{outcome_sr}(c).

(c) Let $n=2$. If $m=1$, then $(G \odot H)_{\rm SR} = K_2$ and $O_{\rm SR}(G\odot H)=\mathcal{M}$ by Corollary~\ref{outcome_sr}(a). So, suppose $m\ge2$. Since $\diam(G \odot H)=3=d_{G \odot H}(w_{i,x}, w_{j,z})$ for $\{i,j\}=\{1,2\}$ and $x, z\in[m]$, we have $(G\odot H)_{\rm SR} \supseteq C_4$. By Corollary~\ref{outcome_sr}(c), $O_{\rm SR}(G\odot H)=\mathcal{B}$.~\hfill
\end{proof}

Next, we consider $O_{\rm SR}(K_1 \odot H)$ when $H$ is a disconnected graph. We obtain the following lemma that is used in proving Proposition~\ref{corona_disconnected}. 

\begin{lemma}\label{outcome_sr_star}
Let $G$ be a connected graph. If $G_{\rm SR} = K_1+(aK_2\cup bK_1)$ such that $a\ge1$ and $b\ge0$, or $a=0$ and $b\ge2$, then $O_{\rm SR}(G)=\mathcal{N}$.
\end{lemma}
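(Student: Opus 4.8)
The plan is to read off the structure of $G_{\rm SR}=K_1+(aK_2\cup bK_1)$ and then, via Theorem~\ref{thm_mmd} (and the correspondence between strong resolving sets of $G$ and vertex covers of $G_{\rm SR}$), to reduce the MBSRG on $G$ to the Maker-Breaker vertex-cover game on $G_{\rm SR}$, in which Maker aims to occupy a vertex cover. Write $v$ for the apex (the $K_1$ joined to everything), let $x_iy_i$ $(i\in[a])$ be the $a$ independent edges, and let $z_1,\dots,z_b$ be the $b$ vertices adjacent to $v$ only. The governing structural fact is that every edge of $G_{\rm SR}$ is either incident with $v$ or is one of the matching edges $x_iy_i$; hence whoever controls $v$ controls the game. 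Vertices of $G$ lying outside $V(G_{\rm SR})$ are neutral for the vertex-cover game and can be ignored throughout.

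First I would handle the M-game. Maker takes $v$ as his opening move, which at once covers every edge incident with $v$, leaving only the matching $\{x_iy_i : i\in[a]\}$ to be covered. From this point Maker plays the pairing strategy on the pairs $\{x_i,y_i\}$, replying to $x_i$ with $y_i$ and vice versa, which secures one endpoint of each matching edge; the $z_j$'s and any neutral vertices are irrelevant once $v$ is held. Formally, $X=\bigcup_{i=1}^{a}\{\{x_i,y_i\}\}$ together with the distinguished vertex $v$ is a quasi-pairing strong resolving set of $G$: for every $Z$ meeting each pair, $Z\cup\{v\}$ is a vertex cover of $G_{\rm SR}$ and hence a strong resolving set of $G$. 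So Theorem~\ref{main_result}(b) gives $O_{\rm SR}(G)\in\{\mathcal{M},\mathcal{N}\}$, i.e.\ Maker wins the M-game. When $a=0$ this is even more immediate, since $\{v\}$ is itself a strong resolving set and Maker wins the instant he takes $v$.

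Next I would show Breaker wins the B-game, which forces the outcome down to $\mathcal{N}$. Breaker opens by taking $v$. Now for each non-apex vertex $w$ the edge $\{v,w\}$ of $G_{\rm SR}$ can only be covered by $w$ itself, so Maker is obliged to occupy every one of the $2a+b$ non-apex vertices. The hypothesis ($a\ge1$, or $a=0$ and $b\ge2$) guarantees $2a+b\ge2$, so after Maker's first reply at most one of these vertices is his and at least one remains free; Breaker claims a free one on her second move, after which the edge $\{v,w\}$ with both endpoints owned by Breaker can never be covered. Thus Maker can never complete a vertex cover of $G_{\rm SR}$, and Breaker wins the B-game. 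Combining the two games, the first player always wins, so $O_{\rm SR}(G)=\mathcal{N}$.

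The one point needing genuine care is the B-game argument: one must verify that holding $v$ really forces Maker to claim \emph{all} non-apex vertices, that the count $2a+b\ge2$ (which is exactly where the numerical hypothesis on $a$ and $b$ enters) lets Breaker seize one of them before Maker can collect them all, and that the neutral vertices and the pendant vertices $z_j$ afford Maker no way to sidestep this trap. The M-game half is comparatively routine, being a direct application of the quasi-pairing machinery of Theorem~\ref{main_result}(b).
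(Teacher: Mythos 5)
Your proof is correct and takes essentially the same route as the paper's: both reduce MBSRG to the vertex-cover game on $G_{\rm SR}$, have Maker open at the apex $v$ and then pair on the matching edges (the quasi-pairing machinery of Theorem~\ref{main_result}(b)), and have Breaker win the B-game with a two-move trap on an edge of $G_{\rm SR}$. The only cosmetic difference is that you run Breaker's trap explicitly at $v$, using the count $2a+b\ge 2$, whereas the paper first disposes of the special cases $G_{\rm SR}\in\{C_3,P_3\}$ via Corollary~\ref{outcome_sr}(b) and otherwise cites Theorem~\ref{main_result}(c) at an arbitrary vertex of degree at least two.
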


\begin{proof}
Let $G_{\rm SR} = K_1+(aK_2\cup bK_1)$, where $a\ge1$ and $b\ge0$, or $a=0$ and $b\ge2$. If $G_{\rm SR}=K_1+K_2=C_3$ (when $a=1$ and $b=0$) or $G_{\rm SR}=K_1+2K_1=P_3$ (when $a=0$ and $b=2$), then $O_{\rm SR}(G)=\mathcal{N}$ by Corollary~\ref{outcome_sr}(b). So, suppose $G_{\rm SR} \not\in\{C_3, P_3\}$. Since $\Delta(G_{\SR})\ge 2$, $O_{\rm SR}(G)\in\{\mathcal{N}, \mathcal{B}\}$ by Theorem~\ref{main_result}(c). To prove $O_{\rm SR}(G)=\mathcal{N}$, we need to show that Maker wins the M-game of MBSRG. Let $v$ be the vertex in the $K_1 \subset G_{\rm SR}$ with $\deg(v)=2a+b$. If $a\neq0$, let $\{u_1, u_2, \ldots, u_{2a}\}$ be the set of degree-$2$ vertices in $G_{\rm SR}$ such that $u_{2i-1}u_{2i}\in E(G_{\rm SR})$ for each $i\in[a]$; let $X=\cup_{i=1}^{a}\{\{u_{2i-1},u_{2i}\}\}$. We consider the M-game. If $a=0$, then Maker can choose $v$ on his first move. If $a\neq 0$, then Maker can select $v$ on his first move and exactly one vertex of each pair in $X$ afterwards. In each case, Maker is able to occupy a vertex cover of $G_{\rm SR}$, and thus Maker wins the M-game of MBSRG.~\hfill
\end{proof}

\begin{proposition}\label{corona_disconnected}
Let $c(H)$ denote the number of connected components of a graph $H$ with $c(H)\ge2$.
\begin{itemize}
\item[(a)] If $c(H)\ge4$, then $O_{\rm SR} (K_1\odot H)=\mathcal{B}$.
\item[(b)] If $c(H)=3$, then $O_{\rm SR}(K_1\odot H)=\left\{
\begin{array}{ll}
\mathcal{N} & \mbox{ if }H = 3K_1,\\
\mathcal{B} & \mbox{ otherwise}.
\end{array}
\right.$
\item[(c)] If $c(H)=2$, then $O_{\rm SR}(K_1\odot H)=\left\{
\begin{array}{ll}
\mathcal{M} & \mbox{ if }H = 2K_1,\\
\mathcal{N} & \mbox{ if }H \in \{K_1 \cup K_2,K_1 \cup P_3\} \mbox{ or }H= K_1 \cup H^*\\
\mathcal{B} & \mbox{ otherwise},
\end{array}
\right.$
\end{itemize}
where $H^*$ is a connected graph of order $m\ge4$ such that $\deg_{H^*}(w) \in \{m-2, m-1\}$ for each $w\in V(H^*)$ with at most two vertices in $H^*$ having degree $m-1$.
\end{proposition}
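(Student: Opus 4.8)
The plan is to first pin down the strong resolving graph $(K_1 \odot H)_{\rm SR} = (K_1 + H)_{\rm SR}$ explicitly, and then read off each outcome from Corollary~\ref{outcome_sr} and Lemma~\ref{outcome_sr_star}. Write $v$ for the apex of $K_1 + H$ and let $H_1,\ldots,H_c$ ($c \ge 2$) be the components of $H$. Since $\diam(K_1 + H) = 2$, I would compute the MMD relation directly. Because $H$ is disconnected, for each $x \in V(H)$ there is a vertex in another component that is at distance $2$ from $x$ yet at distance $1$ from $v$; hence $v$ is maximally distant from no vertex and $v \notin V((K_1+H)_{\rm SR})$. For distinct $x,y \in V(H)$, a short neighborhood check gives: if $xy \notin E(H)$ (in particular if $x,y$ lie in different components) then $d(x,y)=2$ and $x$ MMD $y$; whereas if $xy \in E(H)$ then $x$ MMD $y$ if and only if $N_H(x)\cup\{x\} = N_H(y)\cup\{y\}$, i.e. $x$ and $y$ are adjacent twins in $H$. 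Thus $V((K_1+H)_{\rm SR}) = V(H)$, and its edge set is exactly the non-edges of $H$ together with the adjacent-twin edges of $H$.

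With this description, part (a) is immediate: one vertex from each of four distinct components gives four pairwise cross-component vertices, hence a $K_4$ in the SR graph, so $\mathcal B$ follows from Corollary~\ref{outcome_sr}(c). For part (b), when $H = 3K_1$ the SR graph is $C_3$ and $\mathcal N$ follows from Corollary~\ref{outcome_sr}(b); otherwise some component has order $\ge 2$, and I would note that every component of order $\ge 2$ contains an SR-adjacent pair (two non-adjacent vertices if the component is not complete, or any two vertices, which are adjacent twins, if it is complete). Such a pair together with one vertex from each of the other two components spans a $K_4$, giving $\mathcal B$.

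Part (c) is where the work lies. When $H = 2K_1$ the SR graph is $K_2$, so $\mathcal M$. If both components have order $\ge 2$, an SR-adjacent pair in each component spans a $K_4$ (all cross edges are automatic), so $\mathcal B$. The substantive case is $H = K_1 \cup H_2$ with $H_2$ connected of order $m \ge 2$: here the isolated vertex is SR-adjacent to all of $V(H_2)$, so $(K_1+H)_{\rm SR} = K_1 + L$, where $L$ is the SR graph induced on $V(H_2)$, namely $\overline{H_2}$ augmented by edges between adjacent twins. I would split on the degree condition. If every vertex of $H_2$ has degree in $\{m-2,m-1\}$ with at most two of degree $m-1$, then $\overline{H_2}$ is a matching plus at most two isolated vertices; checking that the only adjacent-twin edges join the (universal) vertices of degree $m-1$ shows $L = aK_2 \cup bK_1$ with $a \ge 1$ (the exclusion $H_2 \neq K_m$ forces at least one matching edge), so $(K_1+H)_{\rm SR} = K_1 + (aK_2 \cup bK_1)$ and Lemma~\ref{outcome_sr_star} yields $\mathcal N$; this covers $H_2 \in \{K_2, P_3\}$ and $H_2 = H^*$. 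If the condition fails, then either some vertex has $\ge 2$ non-neighbors in $H_2$, producing a $C_4$ through the apex in the SR graph, or $H_2$ has $\ge 3$ universal vertices, producing a $K_4$; either way Corollary~\ref{outcome_sr}(c) gives $\mathcal B$.

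The main obstacle I anticipate is the structural computation in the first paragraph, specifically the adjacent pairs, which are MMD precisely when they are adjacent twins, and the subsequent translation of the degree hypothesis on $H^*$ into the facts that $\overline{H^*}$ is a matching with at most two isolated vertices and that its only adjacent-twin edges are those among the at most two universal vertices. Once $(K_1+H)_{\rm SR}$ is identified as $K_1 + (aK_2 \cup bK_1)$, the three outcomes follow from the earlier results; the remaining bookkeeping, confirming $a \ge 1$ so that Lemma~\ref{outcome_sr_star} applies and exhausting the two failure modes of the degree condition, is routine.
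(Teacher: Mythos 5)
Your proposal is correct, and every claim in it checks out: the apex $v$ is maximally distant from no vertex since $H$ has no universal vertex; for $x,y\in V(H)$ with $xy\notin E(H)$ the pair is at distance $2=\diam(K_1+H)$ and hence MMD, while for $xy\in E(H)$ mutual maximal distance forces $N_H[x]=N_H[y]$ and conversely; and the translation of the degree hypothesis on $H^*$ is sound (under that hypothesis an adjacent-twin pair of degree $m-2$ would give a vertex of complement-degree two, so the only twin edge is between the at most two universal vertices, whence $L=aK_2\cup bK_1$ with $a\ge1$ and Lemma~\ref{outcome_sr_star} applies). The route, however, is genuinely different in organization from the paper's. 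The paper never states a global description of $(K_1\odot H)_{\rm SR}$: it verifies specific MMD pairs by hand in each case, and in part (c) it splits on $\diam(H^2)\in\{1,2,\ge3\}$, exhibiting a $C_4$ or $K_4$ (or the join structure $K_1+(aK_2\cup bK_1)$) separately in each branch. You instead prove once that for disconnected $H$ the strong resolving graph of $K_1+H$ has vertex set $V(H)$ with edges exactly the non-edges of $H$ together with the adjacent-twin edges of $H$, after which everything is complement-and-twin bookkeeping; in particular your exhaustive dichotomy (some vertex of $H_2$ has two non-neighbors, versus three universal vertices) absorbs the paper's $\diam(H^2)\ge3$ case into the first failure mode, since diameter at least three forces a vertex with two non-neighbors, and it makes the completeness of the case analysis transparent, something the paper's diameter-driven enumeration leaves to the reader. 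The paper's approach buys locality (each subcase is verified from scratch, with no structural lemma to trust), while yours buys a reusable characterization, shorter case analysis, and in places a stronger conclusion ($K_4$ rather than $C_4$ in the SR graph, immaterial for the outcome). Both proofs terminate in identical applications of Corollary~\ref{outcome_sr} and Lemma~\ref{outcome_sr_star}.
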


\begin{proof}
Let $H$ be a disconnected graph. For $c=c(H)\ge2$, let $H^1, H^2,\ldots, H^c$ be the connected components of $H$ of order $m_1, m_2, \ldots, m_c$, respectively; let $V(H^i)=\{u_{i,1}, \ldots, u_{i,m_i}\}$ for each $i\in[c]$. Since $H$ is disconnected, $\diam(K_1\odot H) = 2$.

(a) Let $c(H)\ge4$. Since $u_{i,1}$ MMD $u_{j,1}$ in $K_1\odot H$ for any distinct $i,j\in[c]$, we have $(K_1\odot H)_{\rm SR} \supseteq K_4$. By Corollary~\ref{outcome_sr}(c), $O_{\rm SR} (K_1\odot H)=\mathcal{B}$.

(b) Let $c(H)=3$. If $m_i=1$ for each $i\in[3]$, then $H = 3K_1$ and $(K_1 \odot H)_{\rm SR}= C_3$; thus, $O_{\rm SR} (K_1\odot H)=\mathcal{N}$ by Corollary~\ref{outcome_sr}(b). If $m_i\ge2$ for some $i\in [3]$, say $m_3\ge2$ without loss of generality, then $(K_1 \odot H)_{\rm SR} \supseteq C_4$ since $u_{3,1}$ MMD $u_{i,1}$ and $u_{3,2}$ MMD $u_{i,1}$ in $K_1\odot H$ for each $i\in[2]$; thus, $O_{\rm SR} (K_1\odot H)=\mathcal{B}$ by Corollary~\ref{outcome_sr}(c).

(c) Let $c(H)=2$. First, suppose $m_i=1$ for each $i\in[2]$. Then $H = 2K_1$ and $(K_1 \odot H)_{\rm SR} = K_2$; thus, $O_{\rm SR} (K_1\odot H)=\mathcal{M}$ by Corollary~\ref{outcome_sr}(a).

Second, suppose $m_i=1$ for exactly one $i\in[2]$; let $m_1=1$ and $m_2\ge2$ without loss of generality. If $\diam(H^2)=1$ and $m_2=2$, then $H = K_1 \cup K_2$ and $(K_1 \odot H)_{\rm SR} = C_3$; thus, $O_{\rm SR} (K_1\odot H)=\mathcal{N}$ by Corollary~\ref{outcome_sr}(b). If $\diam(H^2)=1$ and $m_2\ge3$, then $H = K_1 \cup K_{m_2}$ and $(K_1 \odot H)_{\rm SR} =K_{m_2+1} \supseteq K_4$; thus, $O_{\rm SR} (K_1\odot H)=\mathcal{B}$ by Corollary~\ref{outcome_sr}(c).

Now, let $\diam(H^2)=2$; then $m_2\ge3$. If $m_2=3$, then $H = K_1 \cup P_3$ and $(K_1 \odot H)_{\rm SR} =K_1+(K_2 \cup K_1)$; thus, $O_{\rm SR} (K_1\odot H)=\mathcal{N}$ by Lemma~\ref{outcome_sr_star}. So, suppose $m_2\ge4$. If $H^2$ has at least three vertices of degree $m_2-1$, say $\deg_{H^2}(u_{2,i})=m_2-1$ for each $i\in[3]$, then any two distinct vertices in $\{u_{1,1}, u_{2,1}, u_{2,2}, u_{2,3}\}$ are MMD pairs in $K_1 \odot H$; thus, $(K_1 \odot H)_{\rm SR} \supseteq K_4$ and $O_{\rm SR} (K_1\odot H)=\mathcal{B}$ by Corollary~\ref{outcome_sr}(c). If $H^2$ has a vertex of degree at most $m_2-3$, say $\deg_{H^2}(u_{2,1}) \le m_2-3$ with $u_{2,1}u_{2, m_2-1} \not\in E(H^2)$ and $u_{2,1}u_{2, m_2} \not\in E(H^2)$, then $u_{1,1}$ MMD $u_{2,j}$ and $u_{2,1}$ MMD $u_{2,j}$ in $K_1\odot H$ for each $j\in\{m_2-1, m_2\}$; thus, $(K_1\odot H)_{\rm SR} \supseteq C_4$ and $O_{\rm SR} (K_1\odot H)=\mathcal{B}$ by Corollary~\ref{outcome_sr}(c). If the degree of each vertex in $H^2$ is either $m_2-2$ or $m_2-1$ with at most two vertices of $H^2$ having degree $m_2-1$, then $(K_1 \odot H)_{\rm SR} =K_1+\frac{m_2}{2}K_2$ (when $m_2$ is even) or $(K_1 \odot H)_{\rm SR} = K_1+(\frac{m_2-1}{2}K_2 \cup K_1)$ (when $m_2$ is odd); thus, $O_{\rm SR} (K_1\odot H)=\mathcal{N}$ by Lemma~\ref{outcome_sr_star}.

If $\diam(H^2)=d \ge 3$, say $w_0, w_1, w_2, \ldots, w_d$ is a diametral path in $H^2$, then $u_{1,1}$ MMD $w_i$ and $w_0$ MMD $w_i$ in $K_1\odot H$ for each $i\in\{d-1,d\}$; thus, $(K_1\odot H)_{\rm SR} \supseteq C_4$ and $O_{\rm SR} (K_1\odot H)=\mathcal{B}$ by Corollary~\ref{outcome_sr}(c).

Third, suppose $m_i\ge2$ for each $i\in[2]$. In this case, $u_{1,i}$ MMD $u_{2,j}$ in $K_1\odot H$ for each $i\in[m_1]$ and for each $j\in[m_2]$. So, $(K_1 \odot H)_{\rm SR} \supseteq C_4$ and $O_{\rm SR} (K_1\odot H)=\mathcal{B}$ by Corollary~\ref{outcome_sr}(c).~\hfill
\end{proof}

Next, we consider $O_{\rm SR}(K_1 \odot H)$ when $H$ is a connected graph. We begin by considering the outcome of MBSRG played on fan graphs and wheel graphs.

\begin{example}\label{cor_path}
For $n\ge1$, $O_{\rm SR}(K_1\odot P_n)=O_{\rm SR}(K_1+P_n)=\left\{
\begin{array}{ll}
\mathcal{M} & \mbox{ if }n\in\{1,3\},\\
\mathcal{N} & \mbox{ if }n\in\{2,4\},\\
\mathcal{B} & \mbox{ if }n\ge5.
\end{array}
\right.$
\end{example}

\begin{proof}
First, let $n\in\{1,2,3,4\}$. Since $(K_1\odot P_1)_{\rm SR}=K_2$ and $(K_1\odot P_3)_{\rm SR}=2K_2$, we have $O_{\rm SR}(K_1\odot P_1)=O_{\rm SR}(K_1\odot P_3)=\mathcal{M}$ by Corollary~\ref{outcome_sr}(a). Since $(K_1\odot P_2)_{\rm SR}=C_3$ and $(K_1\odot P_4)_{\rm SR}=P_4$, we have $O_{\rm SR}(K_1\odot P_2)=O_{\rm SR}(K_1\odot P_4)=\mathcal{N}$ by Corollary~\ref{outcome_sr}(b). Next, let $n\ge5$; suppose the $n$-path of $K_1\odot P_n$ be given by $u_1, u_2, \ldots, u_n$. Since $u_1$ MMD $u_j$ and $u_2$ MMD $u_j$ in $K_1\odot P_n$ for each $j\in\{n-1, n\}$, where $n\ge5$, $(K_1 \odot P_n)_{\rm SR} \supseteq C_4$ and $O_{\rm SR}(K_1\odot P_n)=\mathcal{B}$ by Corollary~\ref{outcome_sr}(c).~\hfill
\end{proof}

\begin{example}\label{cor_cycle}
For $n\ge3$, $O_{\rm SR}(K_1\odot C_n)=O_{\rm SR}(K_1+ C_n)=\left\{
\begin{array}{ll}
\mathcal{M} & \mbox{ if }n=4,\\
\mathcal{B} & \mbox{ otherwise}.
\end{array}
\right.$
\end{example}

\begin{proof}
First, let $n\in\{3,4,5\}$. Since $K_1\odot C_3 = K_4$, we have $(K_1\odot C_3)_{\rm SR} = K_4$ and $O_{\rm SR}(K_1\odot C_3)=\mathcal{B}$ by Corollary~\ref{outcome_sr}(c). Since $(K_1 \odot C_4)_{\rm SR} = 2 K_2$, we have $O_{\rm SR}(K_1\odot C_4)=\mathcal{M}$ by Corollary~\ref{outcome_sr}(a). Since $(K_1 \odot C_5)_{\rm SR} = C_5$, we have $O_{\rm SR}(K_1\odot C_5)=\mathcal{B}$ by Corollary~\ref{outcome_sr}(c).

Second, let $n\ge6$; notice that $\diam(K_1\odot C_n)=2$. Suppose the $n$-cycle of $K_1\odot C_n$ be given by $u_1, u_2, \ldots, u_n, u_1$. Since any pair of distinct vertices in $\{u_1, u_3, u_5\}$ and $\{u_2, u_4, u_6\}$, respectively, are MMD pairs in $K_1 \odot C_n$, we have $(K_1 \odot C_n)_{\rm SR} \supseteq 2C_3$. By Corollary~\ref{outcome_sr}(c), $O_{\rm SR}(K_1\odot C_n)=\mathcal{B}$ for $n\ge6$.~\hfill
\end{proof}

\begin{proposition}\label{corona_1connected}
Let $H$ be a connected graph of order $m\ge3$.
\begin{itemize}
\item[(a)] If $\diam(H)=1$, then $O_{\rm SR} (K_1\odot H) =\mathcal{B}$.
\item[(b)] If $\diam(H)=2$, then $O_{\rm SR} (K_1\odot H) \in\{\mathcal{M}, \mathcal{N}, \mathcal{B}\}$.
\item[(c)] If $\diam(H)=3$, then $O_{\rm SR} (K_1\odot H) \in\{\mathcal{N}, \mathcal{B}\}$.
\item[(d)] If $\diam(H)\ge4$, then $O_{\rm SR}(K_1\odot H)=\mathcal{B}$.
 \end{itemize}
\end{proposition}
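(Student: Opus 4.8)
The plan is to exploit that $K_1\odot H=K_1+H$: writing $v$ for the universal vertex, every pair of vertices of $H$ lies at distance at most $2$ in $K_1+H$, so the only distances occurring are $1$ and $2$. The decisive preliminary step I would record is a description of the strong resolving graph forced by this. If $w,w'\in V(H)$ are non-adjacent in $H$, then $d_{K_1+H}(w,w')=2$, which is the maximum distance in $K_1+H$ whenever $H$ is non-complete; hence each of $w,w'$ is trivially maximally distant from the other, so $w$ MMD $w'$. Consequently the complement $\overline{H}$ embeds as a subgraph of $(K_1+H)_{\rm SR}$. I would set this up once and reuse it: it supplies all the subgraph containments needed for every ``$\mathcal B$'' conclusion, and it reduces the whole proposition to reading off $(K_1+H)_{\rm SR}$ and invoking Corollary~\ref{outcome_sr}.

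For part (a), $\diam(H)=1$ forces $H=K_m$, so $K_1+H=K_{m+1}$ and $(K_{m+1})_{\rm SR}=K_{m+1}\supseteq K_4$ since $m\ge3$; Corollary~\ref{outcome_sr}(c) gives $\mathcal B$. For part (d), I would take a geodesic $w_0w_1w_2w_3w_4$ realizing a prefix of the diameter, so $d_H(w_i,w_j)=|i-j|$ and any two of these indices with $|i-j|\ge2$ give non-adjacent, hence MMD, vertices. Thus $(K_1+H)_{\rm SR}$ restricted to $\{w_0,\dots,w_4\}$ contains all six such pairs, i.e.\ the complement of $P_5$, and the cycle $w_0w_2w_4w_1w_3w_0$ exhibits $(K_1+H)_{\rm SR}\supseteq C_5$; Corollary~\ref{outcome_sr}(c) (with $H^*=C_5$) yields $\mathcal B$.

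Parts (b) and (c) are realization/obstruction statements. For (c) I would first rule out $\mathcal M$: a geodesic $w_0w_1w_2w_3$ yields the MMD edges $w_0w_2,\ w_0w_3,\ w_1w_3$, i.e.\ the path $w_2w_0w_3w_1$, so $\Delta((K_1+H)_{\rm SR})\ge2$ and Theorem~\ref{main_result}(c) forces $O_{\rm SR}(K_1\odot H)\in\{\mathcal N,\mathcal B\}$; then I would display $H=P_4$ (diameter $3$, outcome $\mathcal N$ by Example~\ref{cor_path}) and $H=C_6$ (diameter $3$; here $\overline{C_6}$ contains the two disjoint triangles $\{1,3,5\}$ and $\{2,4,6\}$, so $(K_1+C_6)_{\rm SR}\supseteq 2C_3$ and Corollary~\ref{outcome_sr}(c) gives $\mathcal B$) to see both occur. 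For (b) I would simply exhibit one diameter-$2$ graph per outcome: $H=P_3$ gives $\mathcal M$ and $H=C_5$ gives $\mathcal B$, both already computed in Examples~\ref{cor_path} and~\ref{cor_cycle}; for $\mathcal N$ the star $K_{1,3}$ works, where the three leaves form a triangle and the single extra MMD edge from $v$ to the (universal) center yields $(K_1+K_{1,3})_{\rm SR}=C_3\cup K_2$, whence Corollary~\ref{outcome_sr}(b) applies.

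The main obstacle I anticipate is the bookkeeping of the \emph{extra} MMD edges in $(K_1+H)_{\rm SR}$ — those arising from \emph{adjacent} pairs of $H$ and from the universal vertex $v$ — precisely in the cases where I need the \emph{exact} strong resolving graph rather than a mere containment (the $K_{1,3}$ computation and the $C_4$/$C_5$ cases). For the ``always-$\mathcal B$'' conclusions (a), (d) and every $\mathcal B$ realization, the embedding $\overline{H}\subseteq(K_1+H)_{\rm SR}$ is all I use and the extra edges are harmless; but for the $\mathcal M$ and $\mathcal N$ realizations I must verify that no unexpected higher-degree structure appears, which amounts to checking that in $K_1+H$ the vertex $v$ is MMD only with universal vertices of $H$ and characterizing exactly when an adjacent pair $ww'$ of $H$ becomes MMD (namely when $N_H(w)\subseteq N_H[w']$). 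These checks are routine once the distance-$\le 2$ structure is in hand.
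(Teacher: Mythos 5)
Your proposal is correct and takes essentially the same route as the paper: establish containments in $(K_1\odot H)_{\rm SR}$ via MMD pairs (non-adjacent vertices of $H$ are MMD in the join because $\diam(K_1+H)=2$) and then invoke Corollary~\ref{outcome_sr} and Theorem~\ref{main_result}(c), exhibiting example graphs for parts (b) and (c); the only differences are cosmetic choices of witnesses --- the paper finds $C_4\subseteq(K_1\odot H)_{\rm SR}$ in part (d) via $w_0,w_1$ MMD $w_{d-1},w_d$ where you find $C_5$ inside the complement of $P_5$, and it uses $P_3/C_4$, $K_{2,3}/K_{1,2,3}$, and $K_{a,b}$ ($a,b\ge3$) in part (b) where you use $P_3$, $K_{1,3}$, and $C_5$, all of which check out (indeed $(K_1+K_{1,3})_{\rm SR}=C_3\cup K_2$ as you claim). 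One small caveat in your closing remarks: an adjacent pair $w,w'$ of $H$ is MMD in $K_1+H$ exactly when \emph{both} inclusions $N_H(w)\subseteq N_H[w']$ and $N_H(w')\subseteq N_H[w]$ hold (i.e., $w,w'$ are adjacent twins), not the single inclusion you state --- in $K_{1,3}$ a leaf $\ell$ satisfies $N_H(\ell)\subseteq N_H[c]$ yet $\ell$ and the center $c$ are not MMD --- though none of your actual conclusions relies on the one-sided version.
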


\begin{proof}
Let $H$ be a connected graph of order $m\ge3$.

(a) Let $\diam(H)=1$. Then $H= K_m$ and $K_1\odot H = K_{m+1}$, where $m\ge3$. So, $(K_1 \odot H)_{\rm SR} = K_{m+1} \supseteq K_4$ and $O_{\rm SR} (K_1\odot H) =\mathcal{B}$ by Corollary~\ref{outcome_sr}(c).

(b) Let $\diam(H)=2$. We provide graphs $H$ with $\diam(H)=2$ such that $O_{\rm SR} (K_1\odot H)=\mathcal{M}$, $O_{\rm SR} (K_1\odot H)=\mathcal{N}$ and $O_{\rm SR} (K_1\odot H)=\mathcal{B}$, respectively. If $H\in\{P_3, C_4\}$, then $\diam(H)=2$ and $O_{\rm SR} (K_1\odot H)=\mathcal{M}$ (see Examples~\ref{cor_path} and~\ref{cor_cycle}). If $H\in\{K_{2,3}, K_{1,2,3}\}$, then $\diam(H)=2$, $K_1\odot K_{2,3}=K_{1,2,3}$ and $K_1 \odot K_{1,2,3}=K_{1,1,2,3}$; thus, $O_{\rm SR} (K_1\odot H)=\mathcal{N}$ by Proposition~\ref{mix_graph_outcome}(d). If $H=K_{a,b}$ with $\min\{a,b\}\ge3$, then $\diam(H)=2$ and $K_1\odot K_{a,b}=K_{1,a,b}$, where $a,b\ge3$; thus, $O_{\rm SR} (K_1\odot H)=\mathcal{B}$ by Proposition~\ref{mix_graph_outcome}(d).

(c) Let $\diam(H)=3$. Let $w_0, w_1, w_2, w_3$ be a diametral path in $H$. Since $w_0$ MMD $w_2$ and $w_0$ MMD $w_3$ in $K_1\odot H$, we have $\Delta((K_1 \odot H)_{\rm SR})\ge2$; thus, $O_{\rm SR}(K_1\odot H)\in\{\mathcal{N}, \mathcal{B}\}$ by Theorem~\ref{main_result}(c).

Next, we provide graphs $H$ with $\diam(H)=3$ such that $O_{\rm SR} (K_1\odot H)=\mathcal{N}$ and $O_{\rm SR} (K_1\odot H)=\mathcal{B}$, respectively. If $H=P_4$, then $\diam(H)=3$ and $O_{\rm SR} (K_1\odot H)=\mathcal{N}$ (see Example~\ref{cor_path}). If $H\in\{C_6, C_7\}$, then $\diam(H)=3$ and $O_{\rm SR} (K_1\odot H)=\mathcal{B}$ (see Example~\ref{cor_cycle}).

(d) Suppose $\diam(H) \ge4$. Let $w_0, w_1, \ldots, w_{d}$ be a diametral path in $H$, where $d\ge4$. Since $w_0$ MMD $w_j$ and $w_1$ MMD $w_j$ in $K_1\odot H$ for each $j\in\{d-1, d\}$, we have $(K_1 \odot H)_{\rm SR} \supseteq C_4$; thus, $O_{\rm SR}(K_1\odot H)=\mathcal{B}$ by Corollary~\ref{outcome_sr}(c).~\hfill
\end{proof}


\subsection{Cartesian Product Graphs}

The \emph{Cartesian product} of two graphs $G$ and $H$, denoted by $G \square H$, is the graph with the vertex set $V(G) \times V(H)$ such that $(u,w)$ is adjacent to $(u', w')$ if and only if either $u=u'$ and $ww'\in E(H)$,  or $w=w'$ and $uu'\in E(G)$. The \emph{direct product} (also known as the \emph{tensor product}) of two graphs $G$ and $H$, denoted by $G \times H$, is the graph with the vertex set $V(G) \times V(H)$ such that $(u,w)$ is adjacent to $(u', w')$ if and only if $uu'\in E(G)$ and $ww'\in E(H)$. For the strong metric dimension of Cartesian product graphs, see~\cite{GSR}; for the fractional strong metric dimension of Cartesian product graphs, see~\cite{sdimF_products}. We first recall the following result connecting Cartesian product and direct product.

\begin{theorem}\emph{\cite{GSR}}\label{cartesian_direct}
If $G$ and $H$ are two connected graphs of order at least two, then $(G \square H)_{\rm SR} = G_{\rm SR} \times H_{\rm SR}$.
\end{theorem}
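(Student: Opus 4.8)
The plan is to reduce the whole statement to the well-known additive distance formula for connected Cartesian products, $d_{G\square H}((u,w),(u',w')) = d_G(u,u') + d_H(w,w')$, together with the explicit description of adjacency in $G\square H$. Since both $(G\square H)_{\rm SR}$ and $G_{\rm SR}\times H_{\rm SR}$ are determined entirely by the mutually-maximally-distant relation on the two factors, it suffices to establish a single \emph{decoupling equivalence}: for distinct $(u,w),(u',w')$,
\[
(u,w)\text{ MMD }(u',w')\text{ in }G\square H \iff u\text{ MMD }u'\text{ in }G \text{ and } w\text{ MMD }w'\text{ in }H.
\]
Once this holds, I would read off the theorem as follows. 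A vertex $(u,w)$ lies in $V((G\square H)_{\rm SR})$ precisely when it is MMD with some vertex, which by the equivalence happens iff $u$ is MMD with some vertex of $G$ and $w$ is MMD with some vertex of $H$, i.e. iff $u\in V(G_{\rm SR})$ and $w\in V(H_{\rm SR})$; thus the vertex sets agree. The right-hand side of the displayed equivalence is exactly the adjacency rule of the direct product, so the edge sets agree as well, giving $(G\square H)_{\rm SR}=G_{\rm SR}\times H_{\rm SR}$.

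The main obstacle is the decoupling, and the heart of it is a coordinatewise splitting of the one-sided ``maximally distant'' relation, which I would prove first. The key structural input is that the neighbors of $(u,w)$ in $G\square H$ are exactly the vertices $(t,w)$ with $t\in N_G(u)$ and the vertices $(u,s)$ with $s\in N_H(w)$; each neighbor perturbs a single coordinate to a neighbor within that factor. Comparing $(u,w)$ with a neighbor through the additive distance formula, a neighbor of the first type replaces $d_G(u,u')$ by $d_G(t,u')$ and leaves the $H$-term untouched, while a neighbor of the second type replaces $d_H(w,w')$ by $d_H(s,w')$ and leaves the $G$-term untouched. Hence $d_{G\square H}((u,w),(u',w'))\ge d_{G\square H}(\text{neighbor},(u',w'))$ holds for all neighbors iff $d_G(u,u')\ge d_G(t,u')$ for every $t\in N_G(u)$ and $d_H(w,w')\ge d_H(s,w')$ for every $s\in N_H(w)$; that is, $(u,w)$ is maximally distant from $(u',w')$ in $G\square H$ iff $u$ is maximally distant from $u'$ in $G$ and $w$ is maximally distant from $w'$ in $H$. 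This separation is the one place the argument could fail, and it works only because Cartesian adjacency changes exactly one coordinate at a time and the distance is additive, so the two factor conditions never interact.

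Finally, I would obtain the decoupling equivalence by applying the maximally-distant splitting in both directions: $(u,w)$ MMD $(u',w')$ means each is maximally distant from the other, which the previous paragraph converts into $u$ MMD $u'$ in $G$ together with $w$ MMD $w'$ in $H$. I would also dispatch the boundary check that no MMD pair shares a coordinate: if $u=u'$ then $d_G(u,u')=0<d_G(t,u')$ for any neighbor $t$ of $u$ (one exists since $G$ is connected of order at least two), so the maximally distant condition already fails; this is consistent with $u$ MMD $u'$ requiring $u\ne u'$ in each factor. Assembling the matching vertex and edge sets then completes the proof.
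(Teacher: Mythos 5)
This theorem is imported from \cite{GSR} and the paper contains no proof of it, so there is nothing internal to compare against; your argument is correct and is essentially the standard proof in the cited source, which establishes exactly your decoupling lemma (a vertex of $G \square H$ is maximally distant from another if and only if this holds coordinatewise in $G$ and in $H$) via the additive distance formula and the fact that Cartesian adjacency alters one coordinate at a time, and then reads off the equality $(G \square H)_{\rm SR} = G_{\rm SR} \times H_{\rm SR}$ from the definitions of $G_{\rm SR}$ and of the direct product. Your boundary check that maximal distance fails whenever a coordinate is shared (so MMD pairs in the product project to genuine MMD pairs in each factor, and neither side of the equality acquires stray vertices, since $G_{\rm SR}$ and $H_{\rm SR}$ have no isolated vertices under the paper's definition) is exactly the detail needed to make the vertex-set and edge-set identifications complete.
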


Next, for connected graphs $G$ and $H$ of order at least two, we show that $O_{\rm SR}(G \square H)\in\{\mathcal{M},\mathcal{B}\}$ and we characterize Cartesian product graphs satisfying $O_{\rm SR}(G \square H)=\mathcal{M}$. We begin with the following lemma that is used in proving our main result on the outcome of MBSRG played on Cartesian product graphs.

\begin{lemma}\label{cartesian_P3}
Let $G$ and $H$ be two connected graphs of order at least two. If $\Delta(G_{\rm SR})\ge2$ or $\Delta(H_{\rm SR})\ge2$, then $O_{\rm SR}(G \square H)=\mathcal{B}$.
\end{lemma}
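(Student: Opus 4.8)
The plan is to reduce the claim to the structural Breaker-win criterion of Corollary~\ref{outcome_sr}(c) by exhibiting a copy of $2P_3$ inside $(G\square H)_{\rm SR}$. The whole argument runs through Theorem~\ref{cartesian_direct}, which identifies $(G\square H)_{\rm SR}$ with the direct product $G_{\rm SR}\times H_{\rm SR}$; so I would work exclusively with this direct product and its adjacency rule.

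First, since $G\square H\cong H\square G$ and the direct product is symmetric in its two factors, while $O_{\rm SR}$ depends only on the strong resolving graph, I may assume without loss of generality that $\Delta(G_{\rm SR})\ge 2$. I fix a vertex $g_0\in V(G_{\rm SR})$ with two distinct neighbors $g_1,g_2$. Next I record that $H_{\rm SR}$ has an edge: as $H$ is connected of order at least two, the two endpoints $h_1,h_2$ of a diametral path are mutually maximally distant, so $h_1h_2\in E(H_{\rm SR})$. These are the only data I need from the hypotheses.

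The key step is to produce two vertex-disjoint paths on three vertices in $G_{\rm SR}\times H_{\rm SR}$. Using the degree-two vertex $g_0$ together with the single edge $h_1h_2$, I consider the two candidate paths
\[
(g_1,h_1)-(g_0,h_2)-(g_2,h_1)\quad\text{and}\quad (g_1,h_2)-(g_0,h_1)-(g_2,h_2).
\]
Each required edge holds by the definition of the direct product, since in every case the first coordinates form an edge of $G_{\rm SR}$ (one of $g_0g_1$, $g_0g_2$) and the second coordinates form the edge $h_1h_2$ of $H_{\rm SR}$. The six listed vertices are pairwise distinct because $g_0,g_1,g_2$ are distinct and $h_1\ne h_2$, so the two paths are vertex-disjoint; hence $(G\square H)_{\rm SR}=G_{\rm SR}\times H_{\rm SR}\supseteq 2P_3$. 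Taking $H^*=2P_3$ (that is, $x=0$, $y=2$, so $x+y\ge 2$) in Corollary~\ref{outcome_sr}(c) yields $O_{\rm SR}(G\square H)=\mathcal{B}$.

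The delicate point, and the reason a weaker substructure will not suffice, is that a single degree-two vertex alone only forces $\Delta((G\square H)_{\rm SR})\ge 2$, which via Theorem~\ref{main_result}(c) gives no more than $O_{\rm SR}\in\{\mathcal{N},\mathcal{B}\}$. A Breaker win requires a configuration robust to the deletion of Maker's first move, which is exactly what two disjoint copies of $P_3$ furnish (so that whatever vertex Maker claims first, an uncovered edge still survives). I expect the main obstacle to be precisely the observation that one can \emph{double} the path around $g_0$ by swapping the two endpoints $h_1,h_2$ of one $H_{\rm SR}$-edge between the central and the leaf coordinates; once this is seen, checking disjointness and the requisite adjacencies is routine.
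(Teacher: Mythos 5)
Your proof is correct and follows essentially the same route as the paper's: invoke Theorem~\ref{cartesian_direct}, observe $(G\square H)_{\rm SR}=G_{\rm SR}\times H_{\rm SR}\supseteq P_3\times K_2=2P_3$, and conclude via Corollary~\ref{outcome_sr}(c). Your two explicit paths are exactly the standard decomposition of $P_3\times K_2$ into $2P_3$, and your only additions are details the paper leaves implicit (that $H_{\rm SR}$ contains an edge because diametral endpoints are mutually maximally distant, and the disjointness check).
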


\begin{proof}
Without loss of generality, suppose $\Delta(G_{\rm SR}) \ge 2$. By Theorem~\ref{cartesian_direct}, $(G \square H)_{\rm SR} = G_{\rm SR} \times H_{\rm SR} \supseteq P_3 \times K_2 \supseteq 2P_3$. Thus, $O_{\rm SR} (G \square H)=\mathcal{B}$ by Corollary~\ref{outcome_sr}(c).~\hfill
\end{proof}

\begin{theorem}\label{theorem_cartesian}
Let $G$ and $H$ be two connected graphs of order at least two. Then $O_{\rm SR}(G \square H)\in\{\mathcal{M},\mathcal{B}\}$. Moreover,
$O_{\rm SR}(G \square H)=\mathcal{M}$ if and only if $G_{\rm SR} = aK_2$ and $H_{\rm SR} =bK_2$, where $a\in\mathbb{Z}^+$ and $b\in \mathbb{Z}^+$.
\end{theorem}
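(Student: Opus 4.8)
The plan is to combine the structural facts I already have about strong resolving graphs with the characterization results in Corollary~\ref{outcome_sr}. First I would establish the dichotomy $O_{\rm SR}(G \square H)\in\{\mathcal{M},\mathcal{B}\}$, i.e.\ rule out the outcome $\mathcal{N}$. By Theorem~\ref{cartesian_direct} the relevant object is $(G \square H)_{\rm SR}=G_{\rm SR}\times H_{\rm SR}$, so the whole argument reduces to analyzing the direct product of two strong resolving graphs and then quoting Corollary~\ref{outcome_sr}.

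For the forward direction of the characterization, I would argue contrapositively: if it is \emph{not} the case that both $G_{\rm SR}=aK_2$ and $H_{\rm SR}=bK_2$, then $O_{\rm SR}(G\square H)=\mathcal{B}$. There are two cases. If either factor's strong resolving graph has maximum degree at least $2$, then Lemma~\ref{cartesian_P3} immediately gives $O_{\rm SR}(G\square H)=\mathcal{B}$. So I may assume $\Delta(G_{\rm SR})\le 1$ and $\Delta(H_{\rm SR})\le 1$, meaning each of $G_{\rm SR}$ and $H_{\rm SR}$ is a disjoint union of edges and isolated vertices; but since $G$ and $H$ are connected of order at least two, each has at least one MMD pair, so each strong resolving graph contains at least one $K_2$. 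The remaining way to avoid $G_{\rm SR}=aK_2$ is for $G_{\rm SR}$ (or $H_{\rm SR}$) to contain an isolated vertex in addition to its edges. I would need to check that this case cannot actually occur: a vertex of $G_{\rm SR}$ is by definition MMD with some other vertex, hence has a neighbor in $G_{\rm SR}$, so $G_{\rm SR}$ has no isolated vertices. Therefore $\Delta(G_{\rm SR})\le 1$ together with connectivity of $G$ forces $G_{\rm SR}=aK_2$ exactly, and likewise for $H$; this disposes of the forward implication and simultaneously clarifies why only the two cases above arise.

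For the reverse direction, suppose $G_{\rm SR}=aK_2$ and $H_{\rm SR}=bK_2$ with $a,b\in\mathbb{Z}^+$. Then $(G\square H)_{\rm SR}=aK_2\times bK_2$, and since $K_2\times K_2=2K_1$ (the direct product of two edges is two disjoint edges—more precisely, $K_2\times K_2\cong 2K_2$ on four vertices, a perfect matching), the product $aK_2\times bK_2$ is a disjoint union of copies of $K_2$, i.e.\ $(G\square H)_{\rm SR}=2ab\,K_2=xK_2$ for $x=2ab\in\mathbb{Z}^+$. By Corollary~\ref{outcome_sr}(a) this yields $O_{\rm SR}(G\square H)=\mathcal{M}$, which also confirms the dichotomy since every graph not of this special form fell into the $\mathcal{B}$ case above.

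The step I expect to be the main obstacle is the careful handling of the direct product $K_2\times K_2$ and more generally $aK_2\times bK_2$: one must verify that each edge in one factor combined with each edge in the other produces exactly a matching (two disjoint edges) and that there are no additional adjacencies or isolated vertices introduced, so that the total is cleanly $2ab\,K_2$. I would make this explicit by labeling the two endpoints of each $K_2$ and directly applying the adjacency rule for $\times$, confirming that $(u,w)\sim(u',w')$ forces $u\ne u'$ and $w\ne w'$ with both pairs being the matched endpoints, which pins down precisely two edges per pair of factor-edges. Verifying that no isolated vertices appear in $G_{\rm SR}$ (used in the forward direction) is the other point requiring the definition of $V(G_{\rm SR})$, but it is immediate once unpacked.
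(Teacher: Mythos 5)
Your proposal is correct and takes essentially the same route as the paper's proof: reduce via $(G\square H)_{\rm SR}=G_{\rm SR}\times H_{\rm SR}$ (Theorem~\ref{cartesian_direct}), dispatch the case $\Delta(G_{\rm SR})\ge2$ or $\Delta(H_{\rm SR})\ge2$ with Lemma~\ref{cartesian_P3}, and in the remaining case compute $aK_2\times bK_2=(2ab)K_2$ and invoke Corollary~\ref{outcome_sr}(a); your explicit verification that strong resolving graphs have no isolated vertices merely spells out a step the paper leaves implicit when it passes from $\Delta(G_{\rm SR})=1=\Delta(H_{\rm SR})$ to $G_{\rm SR}=aK_2$ and $H_{\rm SR}=bK_2$. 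The slip ``$K_2\times K_2=2K_1$'' is immediately corrected to $2K_2$ in your own parenthetical, so it is harmless.
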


\begin{proof}
First, suppose $\Delta(G_{\rm SR})\ge2$ or $\Delta(H_{\rm SR})\ge2$. Then $O_{\rm SR} (G \square H)=\mathcal{B}$ by Lemma~\ref{cartesian_P3}. 

Second, suppose $\Delta(G_{\rm SR})=1=\Delta(H_{\rm SR})$; then $G_{\rm SR} = aK_2$ and $H_{\rm SR} = bK_2$, where $a,b\in \mathbb{Z}^+$. In this case, $(G \square H)_{\rm SR} = G_{\rm SR} \times H_{\rm SR} = aK_2 \times b K_2 = (2ab)K_2$, and thus $O_{\rm SR}(G \square H)=\mathcal{M}$ by Corollary~\ref{outcome_sr}(a).~\hfill
\end{proof}

We note that $(K_n)_{\rm SR} = K_n$ for $n\ge2$, $(C_{2m-1})_{\rm SR} = C_{2m-1}$ and $(C_{2m})_{\rm SR} = m K_{2}$ for $m\ge2$, and $(T_{\rm SR}) = K_{\sigma}$ for any tree $T$ with $\sigma$ leaves. So, Lemma~\ref{cartesian_P3} and Theorem~\ref{theorem_cartesian} imply the following.

\begin{corollary}
Let $G$ be a connected graph of order at least two.
\begin{itemize}
\item[(a)] For $n\ge3$, $O_{\rm SR}(G \square K_n)=\mathcal{B}$.
\item[(b)] For $n\ge3$, $O_{\rm SR}(G \square C_n)=\left\{
\begin{array}{ll}
\mathcal{M} & \mbox{ if $n$ is even and $\Delta(G_{\rm SR})=1$},\\ 
\mathcal{B} &  \mbox{ if $n$ is odd or $\Delta(G_{\rm SR})\ge2$}.
\end{array}\right.$
\item[(c)] For any tree $T$ that is not a path, $O_{\rm SR}(G \square T)=\mathcal{B}$.
\end{itemize}
\end{corollary}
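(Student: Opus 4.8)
The plan is to reduce each part of the corollary to the two structural results just established, Lemma~\ref{cartesian_P3} and Theorem~\ref{theorem_cartesian}, by simply identifying the strong resolving graph of the second factor in each case. Before starting any case I would invoke the three facts listed in the sentence preceding the corollary: $(K_n)_{\rm SR}=K_n$ for $n\ge 2$, $(C_{2m-1})_{\rm SR}=C_{2m-1}$ and $(C_{2m})_{\rm SR}=mK_2$ for $m\ge 2$, and $T_{\rm SR}=K_\sigma$ for a tree $T$ with $\sigma$ leaves. These are exactly the pieces of data needed to determine $\Delta$ of the relevant strong resolving graph, which is the only input Lemma~\ref{cartesian_P3} and Theorem~\ref{theorem_cartesian} require.

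For part (a), since $n\ge 3$ we have $(K_n)_{\rm SR}=K_n$ with $\Delta(K_n)=n-1\ge 2$, so Lemma~\ref{cartesian_P3} applied to $H=K_n$ gives $O_{\rm SR}(G\square K_n)=\mathcal{B}$ immediately. For part (b), I would split on the parity of $n$. When $n$ is odd (and $n\ge 3$), $(C_n)_{\rm SR}=C_n$ with $\Delta(C_n)=2$, so Lemma~\ref{cartesian_P3} forces $\mathcal{B}$ regardless of $G$. When $n$ is even, $(C_n)_{\rm SR}=\frac{n}{2}K_2$, so $\Delta(H_{\rm SR})=1$; now I apply Theorem~\ref{theorem_cartesian}, whose characterization says $O_{\rm SR}(G\square C_n)=\mathcal{M}$ precisely when both factors' strong resolving graphs are disjoint unions of edges, i.e.\ exactly when $\Delta(G_{\rm SR})=1$ as well, and otherwise ($\Delta(G_{\rm SR})\ge 2$) Lemma~\ref{cartesian_P3} gives $\mathcal{B}$. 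This reproduces the two-case formula in the statement. For part (c), a tree $T$ that is not a path has $\sigma\ge 3$ leaves, so $T_{\rm SR}=K_\sigma\supseteq K_3$ and $\Delta(T_{\rm SR})=\sigma-1\ge 2$; Lemma~\ref{cartesian_P3} then yields $\mathcal{B}$.

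The one point demanding a little care, and the only place I would write more than a single citation, is the even-cycle branch of part (b): here the outcome genuinely depends on $G$, so I must not merely quote Lemma~\ref{cartesian_P3} but must separate the subcase $\Delta(G_{\rm SR})=1$ (use the ``if'' direction of Theorem~\ref{theorem_cartesian} with $a=\frac n2$ wait, rather with $H_{\rm SR}=\frac n2 K_2$ and $G_{\rm SR}$ a union of edges, giving $\mathcal{M}$) from the subcase $\Delta(G_{\rm SR})\ge 2$ (use Lemma~\ref{cartesian_P3}, giving $\mathcal{B}$). I expect no real obstacle beyond bookkeeping, since Theorem~\ref{theorem_cartesian} already does the hard combinatorial work by establishing the dichotomy $O_{\rm SR}(G\square H)\in\{\mathcal{M},\mathcal{B}\}$ together with the explicit characterization of the $\mathcal{M}$ case; the corollary is essentially a table lookup against the three strong-resolving-graph computations recalled just above.
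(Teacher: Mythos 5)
Your proposal is correct and follows essentially the same route as the paper, which derives the corollary directly from Lemma~\ref{cartesian_P3} and Theorem~\ref{theorem_cartesian} together with the recalled facts $(K_n)_{\rm SR}=K_n$, $(C_{2m-1})_{\rm SR}=C_{2m-1}$, $(C_{2m})_{\rm SR}=mK_2$, and $T_{\rm SR}=K_\sigma$. Your case analysis, including the split on the parity of $n$ in part (b) and the implicit use of the fact that $G_{\rm SR}$ has no isolated vertices (so $\Delta(G_{\rm SR})=1$ forces $G_{\rm SR}=aK_2$), matches the paper's reasoning exactly.
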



\subsection{Modular Product Graphs}

The \emph{modular product} of two graphs $G$ and $H$, denoted by $G\diamond H$, is the graph with vertex set $V(G \diamond H)=V(G) \times V(H)$ and edge set $E(G\diamond H)=E(G\Box H)\cup E(G\times H)\cup E(\overline{G}\times\overline{H})$, where $\overline{X}$ denotes the complement of a graph $X$. The \emph{lexicographic product} of two graphs $G$ and $H$, denoted by $G \circ H$, is the graph with the vertex set $V(G) \times V(H)$ such that $(u,w)$ is adjacent to $(u', w')$ if and only if either $uu'\in E(G)$, or $u=u'$ and $ww'\in E(H)$. It was shown in~\cite{mod_product} that, if (i) neither $G$ nor $H$ is a complete graph and (ii) $G$ and $H$ are not both disjoint unions of two cliques, then $\diam(G \diamond H) \le 3$. 

First, we consider $O_{\rm SR}(G \diamond K_t)$ for $t\ge2$. It was observed in~\cite{mod_product} that $G\diamond K_t=G \circ K_t$.

\begin{proposition}
If $G$ is a connected graph of order at least two and $t\ge2$, then $O_{\rm SR}(G \diamond K_t)=\mathcal{B}$.
\end{proposition}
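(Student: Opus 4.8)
The plan is to show that $(G \diamond K_t)_{\rm SR}$ contains a clique on at least four vertices (or more generally a copy of one of the forbidden subgraphs $K_m$, $C_m$, $P_{m+2}$, or $2C_3$ from Corollary~\ref{outcome_sr}(c)), so that $O_{\rm SR}(G \diamond K_t)=\mathcal{B}$ follows immediately. The central fact I would exploit is the stated identity $G \diamond K_t = G \circ K_t$, which means the modular product in this special case is simply the lexicographic product. In $G \circ K_t$ every ``fiber'' $\{u\} \times V(K_t)$ induces a complete graph $K_t$, and crucially all $t$ vertices of a fiber have identical adjacencies to vertices outside the fiber. Thus any two vertices $(u,w)$ and $(u,w')$ in the same fiber are adjacent twins in $G \circ K_t$, and by Observation~\ref{obs_twin}(b) they both lie in every strong resolving set; more to the point, I expect them to be mutually maximally distant, hence adjacent in the strong resolving graph.

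First I would fix a vertex $u \in V(G)$ and examine the fiber $F_u = \{u\} \times V(K_t)$, which has $t \ge 2$ vertices and induces a clique. The key step is to verify that any two distinct vertices of $F_u$ are MMD in $G \diamond K_t$: since they are adjacent twins (same closed neighborhood up to swapping the two vertices), each is maximally distant from the other, so $F_u$ induces a clique $K_t$ inside $(G \diamond K_t)_{\rm SR}$. If $t \ge 4$, this already gives $(G \diamond K_t)_{\rm SR} \supseteq K_4$ and we are done by Corollary~\ref{outcome_sr}(c). The remaining work is to handle $t \in \{2,3\}$, where a single fiber contributes only $K_2$ or $K_3$.

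For $t \in \{2,3\}$ I would bring in a second fiber. Because $G$ has order at least two and is connected, it has at least two vertices, and in particular $G$ has a diametral pair or simply two vertices $u,u'$ that are mutually maximally distant in $G$; I would argue that if $u$ is maximally distant from $u'$ in $G$, then $(u,w)$ is maximally distant from $(u',w')$ in $G \diamond K_t$ for all choices of $w,w'$, and symmetrically, so the two full fibers $F_u$ and $F_{u'}$ are completely joined in the strong resolving graph. When $t=3$ this produces (at least) two triangles, indeed a $K_{3,3}$ plus the two internal triangles, which contains $2C_3$, giving $\mathcal{B}$ by Corollary~\ref{outcome_sr}(c). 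When $t=2$, the two joined $K_2$'s together with the cross edges form a $K_4$ (all four vertices $(u,w),(u,w'),(u',w),(u',w')$ pairwise MMD), again yielding $K_4 \subseteq (G \diamond K_t)_{\rm SR}$ and hence $\mathcal{B}$.

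The main obstacle I anticipate is the careful verification of the maximal-distance (MMD) claims across two distinct fibers, since the edge set of the modular product mixes $E(G\Box H)$, $E(G\times H)$, and $E(\overline{G}\times\overline{H})$; even with the simplification $G \diamond K_t = G \circ K_t$, I must confirm that adding a coordinate in $K_t$ never creates a shortcut that destroys maximal distance, i.e.\ that distances in $G \circ K_t$ between vertices in different fibers equal the corresponding $G$-distances and that every neighbor of $(u,w)$ stays at distance at most $d_G(u,u')$ from $(u',w')$. I would isolate this as a short distance lemma for the lexicographic product (distances between distinct fibers are inherited from $G$) and then read off the MMD relations mechanically. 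Once that lemma is in hand, the case split on $t$ is routine and each case reduces to exhibiting $K_4$ or $2C_3$ inside the strong resolving graph.
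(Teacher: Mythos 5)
Your proof is correct and takes essentially the same route as the paper: both exploit $G \diamond K_t = G \circ K_t$, lift a diametral (hence MMD) pair $u,u'$ of $G$ to conclude that every vertex of the fiber over $u$ is MMD with every vertex of the fiber over $u'$, and then invoke Corollary~\ref{outcome_sr}(c). The only difference is economy: the cross-fiber edges alone already give $(G \diamond K_t)_{\rm SR} \supseteq K_{t,t} \supseteq C_4$ for every $t \ge 2$, so your within-fiber adjacent-twin observation and the case split on $t \in \{2,3\}$ versus $t \ge 4$ are sound but unnecessary.
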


\begin{proof}
Suppose $d_G(u, w)=\diam(G)$. Then $u \ {\rm MMD} \ w$ in $G$. Let $^uK_t$ and $^wK_t$ be the copies of $K_t$ corresponding to the vertices $u$ and $w$ in $G \circ K_t$, respectively. Then, for $x\in V(^uK_t)$ and $y\in V(^wK_t)$, we have $x \ {\rm MMD} \ y$ in $G \circ K_t=G \diamond K_t$. So, $(G \diamond K_t)_{\rm SR} \supseteq K_{t,t}\supseteq C_4$ since $t\ge2$. Thus, $O_{\rm SR}(G \diamond K_t)=\mathcal{B}$ by Corollary~\ref{outcome_sr}(c).~\hfill
\end{proof}

Second, we consider $O_{\rm SR}(G \diamond H)$ when $\diam(G \diamond H)\le2$. Let ${\rm TW} (G)$ denote the set of all twin edges of $G$, where a twin edge is an edge between adjacent twins. We recall the following result on modular product graphs with diameter $2$.

\begin{theorem}\emph{\cite{mod_product}}\label{mod_prod_diam2}
If $\diam(G\diamond H)=2$, then
$$E((G\diamond H)_{\rm SR})={\rm TW}(G\diamond H)\cup E(\overline{G}\Box\overline{H})\cup E(G\times\overline{H})\cup E(\overline{G}\times H).$$
\end{theorem}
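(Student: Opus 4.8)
The plan is to exploit the diameter-$2$ hypothesis, which collapses the mutually-maximally-distant (MMD) relation to something transparent. Since $E((G\diamond H)_{\rm SR})$ is exactly the set of MMD pairs of $G\diamond H$, I would partition the unordered pairs of distinct vertices of $G\diamond H$ into those that are adjacent and those that are not, and analyze each class separately.

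For a non-adjacent pair $\{x,y\}$: as $G\diamond H$ is connected with $\diam(G\diamond H)=2$, we have $d(x,y)=2$. Then for every $w\in N(x)$, $d(w,y)\le\diam(G\diamond H)=2=d(x,y)$, so $x$ is maximally distant from $y$; by symmetry $y$ is maximally distant from $x$, and hence $x$ MMD $y$. Thus \emph{every} non-adjacent pair is an edge of $(G\diamond H)_{\rm SR}$, contributing precisely $E(\overline{G\diamond H})$.

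For an adjacent pair $\{x,y\}$ the claim is that $x$ MMD $y$ iff $x,y$ are adjacent twins. Writing $N[v]=N(v)\cup\{v\}$ and noting $d(x,y)=1$, the vertex $x$ is maximally distant from $y$ exactly when $d(w,y)\le 1$ for all $w\in N(x)$, i.e. $N(x)\subseteq N[y]$; symmetrically $y$ is maximally distant from $x$ iff $N(y)\subseteq N[x]$. Since $x\in N(y)$ and $y\in N(x)$, the two inclusions together force $N[x]=N[y]$, which is exactly the defining condition for $x,y$ to be adjacent twins; the converse inclusions are immediate. Hence the adjacent MMD pairs are precisely the twin edges. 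Combining the two classes gives
\[
E((G\diamond H)_{\rm SR})={\rm TW}(G\diamond H)\cup E(\overline{G\diamond H}),
\]
a disjoint union, since twin edges lie in $E(G\diamond H)$ whereas complement edges do not.

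It remains to identify $E(\overline{G\diamond H})$ with the three product terms, the routine but care-demanding bookkeeping step, which I would carry out by cases on the coordinates of a non-adjacent pair $(u,w),(u',w')$ using $E(G\diamond H)=E(G\square H)\cup E(G\times H)\cup E(\overline{G}\times\overline{H})$. When $u=u'$ (resp. $w=w'$), non-adjacency reduces to $ww'\in E(\overline{H})$ (resp. $uu'\in E(\overline{G})$), which is exactly $E(\overline{G}\square\overline{H})$; when $u\neq u'$ and $w\neq w'$, the modular-product rule makes the pair adjacent iff $uu'\in E(G)\Leftrightarrow ww'\in E(H)$, so a non-edge corresponds to exactly one of $uu'\in E(G)$, $ww'\in E(H)$ holding, yielding $E(G\times\overline{H})\cup E(\overline{G}\times H)$. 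Assembling these cases gives $E(\overline{G\diamond H})=E(\overline{G}\square\overline{H})\cup E(G\times\overline{H})\cup E(\overline{G}\times H)$, which substituted into the display completes the proof. The only genuine subtlety is the adjacent-pair step: one must verify that the maximal-distance condition forces $N[x]=N[y]$, so that no spurious adjacent pair is counted; everything else follows from the diameter-$2$ reduction and straightforward case-checking.
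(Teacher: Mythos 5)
Your proof is correct, and there is nothing in the paper to compare it against in detail: Theorem~\ref{mod_prod_diam2} is stated there as a recalled result, imported from the reference on the modular product without proof. Your reduction is the natural one and is sound at every step: in \emph{any} graph $\Gamma$ with $\diam(\Gamma)=2$, a non-adjacent pair is at distance $2=\diam(\Gamma)$ and hence automatically mutually maximally distant, while an adjacent pair $x,y$ is MMD exactly when $N(x)\subseteq N[y]$ and $N(y)\subseteq N[x]$, which together with $x\in N(y)$ and $y\in N(x)$ forces $N[x]=N[y]$; this gives the general identity $E(\Gamma_{\rm SR})={\rm TW}(\Gamma)\cup E(\overline{\Gamma})$ for diameter-two graphs, of which the theorem is the special case $\Gamma=G\diamond H$. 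Your bookkeeping for the complement is also right: pairs sharing a coordinate can only be adjacent through the Cartesian summand (the two direct-product summands require both coordinates to change), so such non-edges are exactly $E(\overline{G}\Box\overline{H})$, and for pairs with both coordinates distinct, adjacency in $G\diamond H$ holds iff $uu'\in E(G)\Leftrightarrow ww'\in E(H)$, so the non-edges are exactly $E(G\times\overline{H})\cup E(\overline{G}\times H)$. This is presumably essentially the argument of the cited source, so I would flag no gap; your self-contained version even makes explicit the pleasant fact that only the complement computation uses the modular-product structure, the MMD analysis being completely general for diameter two.
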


\begin{observation}
If $G$ and $H$ are graphs of order at least two with $\diam(G\diamond H)=1$, then $O_{\rm SR}(G \diamond H)=\mathcal{B}$.
\end{observation}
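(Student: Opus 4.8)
The plan is to show that the hypothesis $\diam(G\diamond H)=1$ is extremely rigid: it forces $G\diamond H$ to be a complete graph, and in fact forces each factor to be complete. Once that is established, the conclusion is a one-line appeal to the strong resolving graph of a complete graph together with Corollary~\ref{outcome_sr}(c).

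First I would unpack the edge set $E(G\diamond H)=E(G\square H)\cup E(G\times H)\cup E(\overline{G}\times\overline{H})$ and argue that $\diam(G\diamond H)=1$ forces both $G$ and $H$ to be complete. The argument is by contradiction on a non-edge. Suppose, say, $G$ has a non-edge, i.e.\ there exist distinct $u,u'\in V(G)$ with $uu'\notin E(G)$. Fixing any $w\in V(H)$, I would examine the two distinct vertices $(u,w)$ and $(u',w)$ and check each of the three edge types in turn: the Cartesian edge requires $uu'\in E(G)$ (since the $H$-coordinates agree) and so fails; the direct edge requires $uu'\in E(G)$ and so fails; and the complement-direct edge requires $ww'\in E(\overline{H})$, which fails because the $H$-coordinates coincide. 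Hence $(u,w)$ and $(u',w)$ are non-adjacent, so $\diam(G\diamond H)\ge 2$, contradicting the hypothesis. The symmetric argument (fixing a $G$-coordinate and using a non-edge of $H$) shows $H$ is complete as well. Therefore $G=K_n$ and $H=K_m$ with $n,m\ge 2$.

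Next I would identify the product explicitly. With $G=K_n$ and $H=K_m$, a direct check of the edge types confirms that any two distinct vertices are adjacent: if exactly one coordinate agrees, a Cartesian edge is present, and if both coordinates differ, a direct edge is present (both factors being complete). Thus $G\diamond H=K_{nm}$, and since $n,m\ge 2$ we have $nm\ge 4$.

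Finally, I would invoke the recorded identity $(K_N)_{\rm SR}=K_N$ for $N\ge 2$ (noted just before the Cartesian-product corollary) to conclude $(G\diamond H)_{\rm SR}=K_{nm}\supseteq K_4$. Applying Corollary~\ref{outcome_sr}(c) with $H^{*}=K_4$ then yields $O_{\rm SR}(G\diamond H)=\mathcal{B}$. The only genuine content here is the characterization step: establishing that a modular product of diameter $1$ must have both factors complete. Everything downstream of that—computing $G\diamond H=K_{nm}$ and reading off $\mathcal{B}$ from $(K_{nm})_{\rm SR}\supseteq K_4$—is immediate, so I expect no real obstacle beyond the careful case check on the three edge types.
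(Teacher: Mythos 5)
Your proof is correct and reaches the conclusion by essentially the same route as the paper: $G\diamond H=K_{nm}$ with $nm\ge 4$, hence $(G\diamond H)_{\rm SR}=K_{nm}\supseteq K_4$, and Corollary~\ref{outcome_sr}(c) gives $O_{\rm SR}(G\diamond H)=\mathcal{B}$. The only difference is that what you call ``the only genuine content''---the factor-by-factor characterization---is superfluous: $\diam(G\diamond H)=1$ means by definition that every pair of distinct vertices is adjacent, so $G\diamond H$ is complete outright (which is all the paper uses); your stronger observation that both factors must then be complete is true but not needed.
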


\begin{proof}
Suppose $G$ and $H$ are graphs of order $n$ and $m$, respectively, where $n, m\ge2$. Since $\diam(G\diamond H)=1$, $G \diamond H =K_{mn}$. So, $(G\diamond H)_{\rm SR}=K_{mn}\supseteq K_4$. By Corollary~\ref{outcome_sr}(c), $O_{\rm SR}(G \diamond H)=\mathcal{B}$.~\hfill
\end{proof} 

\begin{proposition}
Let $G$ and $H$ be connected graphs of order at least two such that neither $G$ nor $H$ is a complete graph. If $\diam(G\diamond H)=2$, then $O_{\rm SR}(G \diamond H)=\mathcal{B}$.
\end{proposition}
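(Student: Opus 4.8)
The plan is to show that the strong resolving graph $(G\diamond H)_{\rm SR}$ contains a subgraph of the type forbidden in Corollary~\ref{outcome_sr}(c)—namely $K_4$, $C_m$ with $m\ge 4$, $P_{m+2}$, or a disjoint union $xC_3\cup yP_3$ with $x+y\ge 2$—and then invoke that corollary to conclude $O_{\rm SR}(G\diamond H)=\mathcal{B}$. Since $\diam(G\diamond H)=2$, the edge set of the strong resolving graph is given explicitly by Theorem~\ref{mod_prod_diam2}, so the entire argument reduces to exhibiting enough MMD pairs, equivalently enough edges in one of the four constituent sets ${\rm TW}(G\diamond H)$, $E(\overline{G}\Box\overline{H})$, $E(G\times\overline{H})$, or $E(\overline{G}\times H)$.

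First I would exploit the hypothesis that neither $G$ nor $H$ is complete. Because $H$ is not complete, $\overline{H}$ has at least one edge, say $w w'\in E(\overline{H})$; similarly $\overline{G}$ has at least one edge $uu'\in E(\overline{G})$ (using that $G$ is not complete), and since $G,H$ are connected of order at least two, $G$ has an edge $e_G$ and $H$ has an edge $e_H$. The strategy is to combine these to produce a $C_4$. For instance, the set $E(\overline{G}\Box\overline{H})$ contains, for a fixed non-edge $uu'$ of $G$ and a fixed non-edge $ww'$ of $H$, the four vertices $(u,w),(u',w),(u,w'),(u',w')$ arranged so that the Cartesian-product edges among them form a $4$-cycle; this $C_4$ sits inside $(G\diamond H)_{\rm SR}$ by Theorem~\ref{mod_prod_diam2}. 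I would first check carefully that these four pairs are genuinely present as edges of $\overline{G}\Box\overline{H}$: the pair $(u,w)(u',w)$ is an edge of $\overline{G}\Box\overline{H}$ precisely when $uu'\in E(\overline{G})$, which holds, and analogously for the other three sides, so the $4$-cycle is realized.

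The main obstacle I anticipate is a degenerate boundary case: if $\overline{G}$ or $\overline{H}$ has exactly one edge and the relevant four vertices collapse or coincide, the naive $C_4$ from $E(\overline{G}\Box\overline{H})$ might fail to be a genuine $4$-cycle, or one of $\overline{G}, \overline{H}$ might be edgeless in a way that forces us into the excluded ``disjoint unions of two cliques'' regime. I would therefore cross-check against the other three edge sets in Theorem~\ref{mod_prod_diam2}: if the $\overline{G}\Box\overline{H}$ construction degenerates, the set $E(G\times\overline{H})$ (using a genuine edge of $G$ together with a non-edge of $H$) or $E(\overline{G}\times H)$ should supply the needed cycle or path, and these are available precisely because $G$ has an edge while $H$ is non-complete (and symmetrically). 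The careful bookkeeping here is to verify that at least one of the four constituent edge sets always yields a forbidden configuration given only that $\diam(G\diamond H)=2$ and neither factor is complete.

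Finally, having produced a subgraph $H^*\in\{K_4, C_m, P_{m+2}, xC_3\cup yP_3\}$ inside $(G\diamond H)_{\rm SR}$, I would conclude directly by Corollary~\ref{outcome_sr}(c) that $O_{\rm SR}(G\diamond H)=\mathcal{B}$. I expect the cleanest write-up to fix a single uniform construction—most likely the $C_4$ inside $E(\overline{G}\Box\overline{H})$—and handle any degeneracy by a short appeal to one of the tensor-type edge sets, so that the proof stays a few lines long rather than splitting into many cases.
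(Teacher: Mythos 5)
Your proposal is correct and takes essentially the same approach as the paper: since neither factor is complete, $\overline{G}\,\Box\,\overline{H}\supseteq K_2\,\Box\,K_2=C_4$, this $C_4$ lies in $(G\diamond H)_{\rm SR}$ by Theorem~\ref{mod_prod_diam2}, and Corollary~\ref{outcome_sr}(c) concludes $O_{\rm SR}(G\diamond H)=\mathcal{B}$. The degeneracy you worry about never arises---a non-edge $uu'$ of $G$ and a non-edge $ww'$ of $H$ always yield four distinct vertices $(u,w),(u',w),(u,w'),(u',w')$---so no fallback to the tensor-type edge sets is needed.
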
 

\begin{proof}
Since neither $G$ nor $H$ is a complete graph and $\diam(G\diamond H)=2$, Theorem~\ref{mod_prod_diam2} implies $(G \diamond H)_{\rm SR} \supseteq \overline{G} \square \overline{H} \supseteq K_2 \square K_2 \supseteq C_4$. By Corollary~\ref{outcome_sr}(c), $O_{\rm SR}(G \diamond H)=\mathcal{B}$.~\hfill
\end{proof}
 
Next, we consider $O_{\rm SR}(G \diamond H)$ when $\diam(G \diamond H)=3$. We show that $O_{\rm SR}(G \diamond H) \in \{\mathcal{M}, \mathcal{N}, \mathcal{B} \}$. We recall some terminology and notations. For $x\in V(G)$, let $N_G[x]=N_G(x) \cup\{x\}$. A set $D \subseteq V(G)$ is a \emph{dominating set} of $G$ if $\cup_{x\in D}N[x]=V(G)$. If $\{u, w\}$ is a minimum dominating set of $G$ satisfying $N_G[u] \cap N_G[w]=\emptyset$ and $N_G[u] \cup N_G[w]=V(G)$, then the pair $\{u, w\}$ is called a $\gamma_G$-pair. Let $\mathbb{P}(G)=\{x\in V(G): \exists y\in V(G) \mbox{ such that }\{x,y\} \mbox{ is a $\gamma_G$-pair}\}$. The \emph{$\gamma_G$-pair graph} of $G$, denoted by $\mathbb{GP}(G)$, has vertex set $V(\mathbb{GP}(G))=V(G)$ and $uv\in E(\mathbb{GP}(G))$ if and only if $\{u, v\}$ is a $\gamma_G$-pair. A vertex $u\in V(G)$ is called a \emph{boundary vertex} of $G$ if there is a vertex $v\in V(G)$  such that $d_G(u,v)=\diam(G)$. We recall the following important results in modular product graphs.

\begin{theorem}\emph{\cite{mod_product}}\label{mod_prod_dist3}
Suppose neither $G$ nor $H$ is a complete graph, and that $G$ and $H$ are not both disjoint unions of two cliques.
Then $d_{G\diamond H}((g,h),(g',h'))=3$ if and only if at least one of the following two (symmetric) conditions holds
\begin{equation}\label{eq1}
N_G[g]=N_G[g'] \wedge d_H(h,h') \geq 3 \wedge  \left( N_G[g]=V(G) \vee \{ h,h' \} \text{ is a } \gamma_H\text{-pair}\right)
\end{equation}
\begin{equation}\label{eq2}
N_H[h]=N_H[h'] \wedge d_G(g,g') \geq 3 \wedge  \left( N_H[h]=V(H) \vee \{ g,g' \} \text{ is a } \gamma_G\text{-pair}\right)
\end{equation}
\end{theorem}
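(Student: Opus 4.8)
The plan is to convert the edge-set definition of $G\diamond H$ into a single adjacency rule and then read off the distance-$3$ pairs as the non-adjacent pairs with no common neighbour. Unpacking $E(G\diamond H)=E(G\square H)\cup E(G\times H)\cup E(\overline{G}\times\overline{H})$, two distinct vertices $(g,h)$ and $(g',h')$ are adjacent if and only if either one coordinate agrees and the other is an edge ($g=g'$ with $hh'\in E(H)$, or $h=h'$ with $gg'\in E(G)$), or both coordinates differ and their adjacency types match, i.e.\ $gg'\in E(G)\Leftrightarrow hh'\in E(H)$. Since the stated hypotheses are exactly those guaranteeing $\diam(G\diamond H)\le 3$ by the cited result of~\cite{mod_product}, a pair is at distance $3$ precisely when it is non-adjacent and admits no common neighbour; so the theorem reduces to deciding, for a non-adjacent pair, when every candidate common neighbour fails. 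I would run this analysis according to whether the two coordinates coincide.

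First I would treat $g=g'$ (the case $h=h'$ being symmetric and yielding~\eqref{eq2}); here non-adjacency forces $hh'\notin E(H)$. A common neighbour $(x,y)$ with $x=g$, or with $x\in N_G(g)$, forces $y\in N_H[h]\cap N_H[h']$, which is impossible once $d_H(h,h')\ge 3$; the only surviving candidates have $x\notin N_G[g]$ and $y\notin N_H[h]\cup N_H[h']$, and such a vertex exists if and only if $N_G[g]\neq V(G)$ and $N_H[h]\cup N_H[h']\neq V(H)$. Thus distance $3$ is equivalent to $d_H(h,h')\ge 3$ together with $N_G[g]=V(G)$ or $N_H[h]\cup N_H[h']=V(H)$. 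Because $\diam(H)\ge 3$ forces the domination number of $H$ to be at least $2$ and $d_H(h,h')\ge 3$ gives $N_H[h]\cap N_H[h']=\emptyset$, the condition $N_H[h]\cup N_H[h']=V(H)$ is exactly ``$\{h,h'\}$ is a $\gamma_H$-pair''; with $N_G[g]=N_G[g']$ automatic when $g=g'$, this is precisely~\eqref{eq1}.

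Next I would treat $g\neq g'$ and $h\neq h'$, where non-adjacency means exactly one of $gg'\in E(G)$, $hh'\in E(H)$ holds; by the $G\leftrightarrow H$ symmetry assume $gg'\in E(G)$ and $hh'\notin E(H)$, aiming at~\eqref{eq1}. If $d_H(h,h')\le 2$ then a common neighbour of $h,h'$ in $H$ lifts to a common neighbour of the pair, so I first reduce to $d_H(h,h')\ge 3$. The key step is the adjacent-twin reduction: since $gg'\in E(G)$ gives $g,g'\in N_G[g]\cap N_G[g']$, any $x\in N_G[g]\setminus N_G[g']$ satisfies $xg\in E(G)$, $xg'\notin E(G)$, $x\notin\{g,g'\}$, which makes $(x,h)$ a common neighbour (and symmetrically $N_G[g']\setminus N_G[g]$ gives $(x,h')$); hence distance $3$ forces $N_G[g]=N_G[g']$. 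Under that twin condition the remaining candidate common neighbours collapse exactly as in the previous case to the existence of $x\notin N_G[g]$ and $y\notin N_H[h]\cup N_H[h']$, yielding once more the disjunction $N_G[g]=V(G)$ or $\{h,h'\}$ a $\gamma_H$-pair, which is~\eqref{eq1}. Finally I would note that an adjacent pair, and a pair with $g\neq g',h\neq h'$ of matching adjacency type, satisfies neither~\eqref{eq1} nor~\eqref{eq2}, so the equivalence is two-sided.

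The step I expect to be the main obstacle is this last case: the adjacency of a candidate $(x,y)$ depends jointly on the positions of $x$ relative to both $g,g'$ and of $y$ relative to both $h,h'$, so excluding every common neighbour means simultaneously controlling four incidence patterns and correctly isolating $N_G[g]=N_G[g']$ from them. A secondary, recurring subtlety is the translation—used in all cases—between ``$N_H[h]\cup N_H[h']=V(H)$ with disjoint closed neighbourhoods'' and ``$\{h,h'\}$ is a $\gamma_H$-pair'', which depends on $\diam(H)\ge 3$ to ensure $\{h,h'\}$ is a \emph{minimum} dominating set rather than merely a dominating pair.
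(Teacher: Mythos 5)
The paper contains no proof of this statement: Theorem~\ref{mod_prod_dist3} is quoted verbatim from~\cite{mod_product}, so there is no in-house argument to compare yours against; I can only judge your attempt on its own merits, and it is correct. Your unpacked adjacency rule for $G\diamond H$ is right, and your reduction of ``distance $3$'' to ``non-adjacent with no common neighbour'' is legitimate given the bound $\diam(G\diamond H)\le 3$, which you import from~\cite{mod_product} exactly as this paper does. The case analysis checks out. For $g=g'$, the candidate common neighbours $(x,y)$ with $x=g$, $x\in N_G(g)$, $x\notin N_G[g]$ are exhaustive and yield precisely: $d_H(h,h')\ge 3$, and $N_G[g]=V(G)$ or $N_H[h]\cup N_H[h']=V(H)$. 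Your twin-forcing step in the mixed case ($g\neq g'$, $h\neq h'$, $gg'\in E(G)$, $hh'\notin E(H)$) is the crux and is sound: for $x\in N_G[g]\setminus N_G[g']$ one has $x\notin\{g,g'\}$ because $gg'\in E(G)$, and then $(x,h)$ is adjacent to $(g,h)$ by a Cartesian edge and to $(g',h')$ by a complement-direct edge (using $hh'\notin E(H)$), so distance $3$ indeed forces $N_G[g]=N_G[g']$. Your translation of ``$N_H[h]\cup N_H[h']=V(H)$ with disjoint closed neighbourhoods'' into ``$\{h,h'\}$ is a $\gamma_H$-pair'' is also properly justified: $d_H(h,h')\ge 3$ rules out a universal vertex of $H$, so $\gamma(H)\ge 2$ and the dominating pair is a \emph{minimum} dominating set, matching the paper's definition.

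Two trivial points, neither a gap. In the subcase $x=g$ the relevant intersection is $N_H(h)\cap N_H(h')$ rather than the closed one you wrote; this is immaterial since both are empty once $d_H(h,h')\ge 3$, and nonemptiness of the open intersection is exactly what produces a common neighbour when $d_H(h,h')=2$. And your closing converse check is the right one to make explicit: \eqref{eq1} with $g\neq g'$ forces $gg'\in E(G)$ (equal closed neighbourhoods of distinct vertices force adjacency) and $hh'\notin E(H)$, so a pair satisfying \eqref{eq1} or \eqref{eq2} is non-adjacent and, by your forward analysis run as an equivalence, has no common neighbour, giving distance exactly $3$ under the diameter bound.
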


\begin{theorem}\emph{\cite{mod_product}}\label{twins1}
Let $G$ and $H$ be graphs of order at least two. 
\begin{itemize}
\item[(a)] There are no distinct non-adjacent twins in $G\diamond H$.
\item[(b)] Two vertices $(g,h)$ and $(g',h')$ are distinct adjacent twins in $G\diamond H$ if and only if
\begin{itemize}
\item[(i)] $N_G[g]=N_G[g']$ and $N_H[h]=N_H[h']$ with $(g,h)\neq(g',h')$, or
\vspace*{-0.01in}
\item[(ii)] $\{g,g'\}$ is a $\gamma_G$-pair and $\{h,h'\}$ is a $\gamma_H$-pair.
\end{itemize}
\end{itemize}
\end{theorem}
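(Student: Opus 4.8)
The plan is to reduce everything to a clean combinatorial description of adjacency in $G \diamond H$ and then to a short parity computation. First I would unpack the three edge sets in the definition of $G\diamond H$ to show that, for \emph{distinct} vertices $(g,h)$ and $(g',h')$,
\[
(g,h)\sim(g',h') \iff \big(\,g'\in N_G[g] \ \Longleftrightarrow\ h'\in N_H[h]\,\big).
\]
Indeed, recording the relation of a pair in each coordinate as ``equal'', ``adjacent'' or ``non-adjacent'', the edge set $E(G\square H)\cup E(G\times H)\cup E(\overline G\times\overline H)$ produces an edge exactly when the two coordinates are \emph{both} in the closed neighborhood (the first three products) or \emph{both} outside it (the complement product), which is precisely the stated biconditional. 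Setting $A=N_G[g]$, $B=N_H[h]$ and $\overline A=V(G)\setminus A$, $\overline B=V(H)\setminus B$, this immediately yields the ``grid'' description of closed neighborhoods,
\[
N[(g,h)] = (A\times B)\cup(\overline A\times\overline B).
\]

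For part (b), I would use that two distinct vertices are adjacent twins iff their closed neighborhoods coincide, and compare the two grids. Encoding $A,A',B,B'$ by indicator functions $f,f',q,q'$ with values in $\{0,1\}$, the equality $N[(g,h)]=N[(g',h')]$ says that for all $a\in V(G)$ and $b\in V(H)$ one has $f(a)\oplus q(b)=f'(a)\oplus q'(b)$, i.e. $f(a)\oplus f'(a)=q(b)\oplus q'(b)$. The left side depends only on $a$ and the right only on $b$, so both equal a common constant $c\in\{0,1\}$. The case $c=0$ gives $f=f'$, $q=q'$, that is $N_G[g]=N_G[g']$ and $N_H[h]=N_H[h']$, which is (i); the case $c=1$ gives $N_G[g']=\overline{N_G[g]}$ and $N_H[h']=\overline{N_H[h]}$, that is $N_G[g]\cap N_G[g']=\emptyset$ and $N_G[g]\cup N_G[g']=V(G)$ (and likewise in $H$), which are exactly the disjoint-covering conditions of a $\gamma_G$-pair and a $\gamma_H$-pair, i.e. (ii). The one nonroutine point here is the \emph{minimality} built into the definition of a $\gamma_G$-pair: I would argue $\gamma(G)\ge2$ by observing that a single dominating vertex $v$ would satisfy $v\in N_G[g]\cap N_G[g']=\emptyset$, a contradiction, so $\{g,g'\}$ is a minimum dominating set. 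The converse is immediate, since (i) forces $c=0$ and (ii) forces $c=1$, and in each case the two grids coincide.

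For part (a), suppose $(g,h)$ and $(g',h')$ were distinct non-adjacent twins. By the adjacency criterion, non-adjacency means exactly one of the coordinates lies in the relevant closed neighborhood; without loss of generality $g'\in N_G[g]$ and $h'\notin N_H[h]$, the other case being symmetric. I would then exhibit a single vertex $z$ adjacent to exactly one of the two, contradicting $N((g,h))=N((g',h'))$: if $g\ne g'$ the vertex $z=(g',h)$ works, while if $g=g'$ (so that $h,h'$ are distinct and non-adjacent in $H$) any $z=(a,h)$ with $a\ne g$ does, since such a $z$ is adjacent to $(g,h)$ precisely when $ag\in E(G)$ but to $(g,h')$ precisely when $ag\notin E(G)$; here I use $|V(G)|,|V(H)|\ge2$ to guarantee the auxiliary vertices exist. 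I expect the main obstacle to be essentially bookkeeping: in part (a) one must check that the distinguishing vertex $z$ is genuinely distinct from both $(g,h)$ and $(g',h')$, since the open-neighborhood comparison only constrains the remaining vertices, and in part (b) one must justify the minimality clause of the $\gamma$-pair rather than merely its disjoint-covering clause. Once the parity reformulation is in hand, everything else follows directly.
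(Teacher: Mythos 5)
This theorem is quoted in the paper from the reference \cite{mod_product} and no proof of it appears in the present manuscript, so there is no internal argument to compare yours against; I can only assess your proof on its own terms, and it is correct and complete. Your adjacency criterion for distinct vertices --- $(g,h)\sim(g',h')$ iff $\bigl(g'\in N_G[g] \Leftrightarrow h'\in N_H[h]\bigr)$ --- does follow from unpacking $E(G\Box H)\cup E(G\times H)\cup E(\overline{G}\times\overline{H})$, and the resulting grid description $N[(g,h)]=(A\times B)\cup(\overline{A}\times\overline{B})$ makes part (b) a genuine two-line parity argument: the separation $f(a)\oplus f'(a)=q(b)\oplus q'(b)$ forcing a common constant $c$ is valid, $c=0$ gives condition (i), and $c=1$ gives the disjoint-covering conditions. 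You also correctly identified and discharged the two points where a careless write-up would leave gaps: the minimality clause in the definition of a $\gamma_G$-pair (your observation that a dominating vertex $v$ satisfies $v\in N_G[g]\cap N_G[g']$ by symmetry of closed-neighborhood domination, contradicting disjointness, is exactly right), and, in part (a), the verification that the distinguishing vertex $z$ is distinct from both twins --- in your case analysis $z=(g',h)$ works because $h\neq h'$ is forced by $h'\notin N_H[h]$, and the $g=g'$ subcase correctly invokes $|V(G)|\ge 2$. The converse in (b) is indeed immediate once one notes that equality of \emph{closed} neighborhoods for distinct vertices automatically yields adjacency, so only grid equality needs checking.
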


\begin{theorem}\emph{\cite{mod_product}}\label{SRedges}
For non-complete graphs $G$ and $H$ such that at least one of the two is not the disjoint union of two cliques, suppose $\diam(G\diamond H)=3$. Then, for vertices $(g,h)$ and $(g',h')$ of $G\diamond H$, $(g,h)(g',h')\in E((G\diamond H)_{\rm SR})$ if and only if the two vertices satisfy at least one of the following conditions:
\begin{itemize}
\item[$(a)$] $(g,h)$ and $(g',h')$ are distinct adjacent twins of $G\diamond H$;
\item[$(b)$] $d_{G\diamond H}((g,h),(g',h'))=2$, and both $(g,h)$ and $(g',h')$ are not boundary vertices in $G\diamond H$;
\item[$(c)$] $d_{G\diamond H}((g,h),(g',h'))=3$;
\item[$(d)$] $g$ and $g'$ are universal vertices in $G$, $d_H(h,h')=2$ and $hh'\in E(H_{\rm SR})$; or the mirror of the preceding condition;
\item[$(e)$] $g$ is a universal vertex but $g'$ is not a universal vertex in $G$, also $d_H(h,h')=2$ and $d_H(h,h'')\leq 2$ for every $h''\in N_H[h']$, also $h'$ does not belong to any $\gamma_H$-pair; or the mirror of the preceding condition.
\end{itemize}
\end{theorem}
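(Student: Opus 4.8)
The plan is to prove the edge characterization directly from the definition of mutual maximal distance, since $(g,h)(g',h')\in E((G\diamond H)_{\rm SR})$ holds precisely when $(g,h)$ and $(g',h')$ are MMD in $G\diamond H$. Write $p=(g,h)$ and $q=(g',h')$, and recall that $p$ is maximally distant from $q$ exactly when every neighbor of $p$ lies within distance $d_{G\diamond H}(p,q)$ of $q$. Since $\diam(G\diamond H)=3$, a pair at distance $3$ is automatically MMD, and for a distance-$2$ pair the condition ``$p$ maximally distant from $q$'' is equivalent to ``no neighbor of $p$ is at distance $3$ from $q$.'' I would organize the argument as a case analysis on $d_{G\diamond H}(p,q)\in\{1,2,3\}$, establishing in each case exactly which of (a)--(e) is equivalent to $p\,\mathrm{MMD}\,q$, which simultaneously yields both directions of the stated equivalence.

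The two extreme distances are handled quickly. If $d_{G\diamond H}(p,q)=3$, the distance equals the diameter, so every neighbor of either vertex is within distance $3$ of the other; hence the pair is automatically MMD, which is condition (c), and no distance-$3$ pair fails to be MMD. If $d_{G\diamond H}(p,q)=1$, then ``$p$ maximally distant from $q$'' forces every neighbor of $p$ to lie in the closed neighborhood $N[q]$, giving $N[p]\subseteq N[q]$, and by symmetry $N[p]=N[q]$; thus at distance $1$ the pair is MMD if and only if $p,q$ are distinct adjacent twins, i.e.\ condition (a). Here I would invoke Theorem~\ref{twins1} to record the explicit coordinate description of adjacent twins in $G\diamond H$ (its alternatives (i) and (ii)), so that (a) is phrased in terms of $G$ and $H$.

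The substance of the theorem lies in the case $d_{G\diamond H}(p,q)=2$, from which conditions (b), (d), (e) emerge. The clean subcase is when both $p$ and $q$ are \emph{non}-boundary vertices: then no vertex is at distance $3$ from either, so in particular no neighbor of $p$ reaches distance $3$ from $q$ and vice versa, making the pair MMD; this is exactly condition (b), and conversely every distance-$2$ pair with both endpoints non-boundary is MMD. It remains to analyze distance-$2$ pairs in which at least one endpoint is a boundary vertex. For such a pair, testing ``$p$ maximally distant from $q$'' reduces to checking that no neighbor $w$ of $p$ satisfies $d_{G\diamond H}(w,q)=3$, and here I would feed each candidate $w$ into the distance-$3$ criterion of Theorem~\ref{mod_prod_dist3}, i.e.\ into the two symmetric conditions \eqref{eq1}--\eqref{eq2} governed by the neighborhood equalities $N_G[\cdot]=N_G[\cdot]$, the auxiliary distance hypothesis, and the $\gamma$-pair/universal-vertex alternatives. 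Tracking precisely when passing from $p$ to an adjacent $w$ can or cannot activate \eqref{eq1}--\eqref{eq2} relative to $q$ is what forces the hypotheses in (d) (both coordinates universal with $hh'\in E(H_{\rm SR})$) and in (e) (one coordinate universal, the other constrained so that no neighbor escapes to distance $3$ and $h'$ avoids every $\gamma_H$-pair).

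The main obstacle is exactly this bookkeeping in the distance-$2$, boundary-vertex regime: one must show that the universal-vertex and $\gamma$-pair conditions of (d) and (e) are not merely sufficient but necessary, which requires, for each configuration violating them, exhibiting an explicit neighbor $w$ of $p$ (or of $q$) that is pushed to distance $3$ from the opposite vertex through Theorem~\ref{mod_prod_dist3}. The mirror symmetry between the two coordinates roughly halves the work, but the interaction between the modular-product adjacency (which superposes $E(G\Box H)$, $E(G\times H)$, and $E(\overline{G}\times\overline{H})$) and the several distinct sources of distance $3$ makes this neighbor-by-neighbor verification delicate, and it is where essentially all of the care is needed.
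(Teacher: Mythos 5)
You should first note that this paper does not prove Theorem~\ref{SRedges} at all: it is quoted verbatim from the reference \cite{mod_product} (arXiv:2402.07194), so there is no in-paper proof to compare against. Judged on its own terms, your outline gets the framing right and matches the strategy one would expect: an edge of $(G\diamond H)_{\rm SR}$ is exactly an MMD pair, and since $\diam(G\diamond H)=3$ a case split on $d_{G\diamond H}(p,q)\in\{1,2,3\}$ is the natural decomposition. Your three easy cases are handled correctly: at distance $3$ the pair is automatically MMD (giving (c) and nothing else); at distance $1$, maximal distance from each side forces $N[p]=N[q]$, so distance-$1$ MMD pairs are precisely the distinct adjacent twins of (a), with Theorem~\ref{twins1} supplying the coordinate description; and at distance $2$ with both endpoints non-boundary, no vertex at all is at distance $3$ from either endpoint, so such pairs are MMD, which is (b).

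The genuine gap is that the entire content-bearing case --- distance-$2$ pairs with at least one boundary endpoint --- is announced but never executed, and this is where conditions (d) and (e) actually live. Saying that one should ``feed each candidate neighbor $w$ into Theorem~\ref{mod_prod_dist3}'' names the tool but proves neither direction: for sufficiency you must verify that a pair satisfying (d) (both $g,g'$ universal, $d_H(h,h')=2$, $hh'\in E(H_{\rm SR})$) or (e) admits no neighbor realizing \eqref{eq1} or \eqref{eq2} against the opposite vertex, and for necessity you must, for every configuration violating (d) and (e), exhibit a concrete neighbor pushed to distance $3$. For instance, the clause in (e) that $h'$ belongs to no $\gamma_H$-pair is forced by the following kind of argument, absent from your sketch: if $\{h',y\}$ were a $\gamma_H$-pair (so $d_H(h',y)\geq 3$) and $y\in N_H[h]$, then with $g$ universal the vertex $(g',y)$ is adjacent to $(g,h)$ in $G\diamond H$, and \eqref{eq1} applied to $(g',y)$ and $(g',h')$ puts this neighbor at distance $3$, destroying maximal distance; one must also dispose of the complementary case $y\notin N_H[h]$, and likewise explain how $hh'\in E(H_{\rm SR})$ and the condition $d_H(h,h'')\leq 2$ for all $h''\in N_H[h']$ arise, tracking neighbors coming from each of the three edge types $E(G\Box H)$, $E(G\times H)$, $E(\overline{G}\times\overline{H})$. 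None of this bookkeeping is routine or symmetric enough to be dismissed as ``delicate care,'' and without it the biconditional --- in particular that (a)--(e) is an exhaustive list --- is not established. As it stands, your proposal is a correct reduction plus a promissory note for the theorem's actual substance.
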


Next, we consider MBSRG played on modular product graphs with $O_{\rm SR} (G \diamond H)=\mathcal{B}$.

\begin{theorem}\label{diamGmodH3_theorem1}
Let $G$ and $H$ be connected graphs of order at least two such that neither $G$ nor $H$ is a complete graph. Suppose $\diam(G\diamond H)=3$.
\begin{itemize}
\item[(a)] If $\{g, g'\}$ is a $\gamma_G$-pair and $\{h, h'\}$ is a $\gamma_H$-pair, then $O_{\rm SR}(G \diamond H)=\mathcal{B}$.

\item[(b)] If $G$ has at least two distinct universal vertices and $\diam(H)\ge3$, then $O_{\rm SR}(G \diamond H)=\mathcal{B}$.

\item[(c)] Suppose there's no $\gamma_G$-pair and no universal vertex in $G$, but $G$ has a pair of distinct adjacent twins and $H$ has a $\gamma_H$-pair. Then $O_{\rm SR}(G \diamond H)=\mathcal{B}$.

\item[(d)] Suppose there's no $\gamma_G$-pair and no universal vertex in $G$, and $H$ has a $\gamma_H$-pair and at least two distinct vertices in $H$ belong to no $\gamma_H$-pair. Then $O_{\rm SR}(G \diamond H)=\mathcal{B}$.

\item[(e)] Suppose there's no $\gamma_G$-pair and no universal vertex in $G$, and $H$ has distinct adjacent twins $h_1$ and $h_2$ such that both $\{h_1, h\}$ and $\{h_2, h\}$ are $\gamma_H$-pairs for some $h\in V(H)-\{h_1, h_2\}$. Then $O_{\rm SR}(G \diamond H)=\mathcal{B}$.

\end{itemize}
\end{theorem}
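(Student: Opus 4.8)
The plan is to prove each part by exhibiting, inside $(G\diamond H)_{\rm SR}$, one of the subgraphs listed in Corollary~\ref{outcome_sr}(c) --- a $K_4$ for (a) and (c), a $C_4$ for (b) and (d), and a copy of $2C_3$ for (e) --- and then to quote that corollary. In each case I read off the relevant edges of the strong resolving graph from the three structural theorems available: Theorem~\ref{mod_prod_dist3} detects the distance-$3$ pairs through conditions \eqref{eq1} and \eqref{eq2}, Theorem~\ref{twins1} detects the adjacent-twin pairs, and Theorem~\ref{SRedges} converts these (together with distance-$2$ non-boundary pairs) into edges of $(G\diamond H)_{\rm SR}$. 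Since $G$ and $H$ are connected and non-complete they are in particular not disjoint unions of two cliques, so the standing hypotheses of Theorems~\ref{mod_prod_dist3}--\ref{SRedges} hold throughout, as does $\diam(G\diamond H)=3$.

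For parts (a), (b), (c), (e) I work inside a single $2\times2$ ``rectangle'' of vertices (or, for (e), a triangle within one fiber). In (a), from a $\gamma_G$-pair $\{g,g'\}$ and a $\gamma_H$-pair $\{h,h'\}$ I take $(g,h),(g,h'),(g',h),(g',h')$: the two diagonal pairs are distinct adjacent twins by Theorem~\ref{twins1}(b)(ii), and each of the four side pairs shares a coordinate and meets \eqref{eq1} or \eqref{eq2} (a $\gamma$-pair has disjoint closed neighborhoods, forcing distance $\ge 3$ in the free coordinate), so all six pairs are edges and the four vertices span $K_4$. Part (c) is the same computation with the $\gamma_G$-pair replaced by adjacent twins $g_1,g_2$ of $G$: now the two pairs sharing an $H$-coordinate are adjacent twins via Theorem~\ref{twins1}(b)(i) (using $N_G[g_1]=N_G[g_2]$), while the remaining four pairs meet \eqref{eq1}, again giving $K_4$. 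For (b), two universal vertices $g,g'$ and any $h,h'$ with $d_H(h,h')\ge 3$ (which exist because $\diam(H)\ge 3$) make $N_G[g]=N_G[g']=V(G)$; the four pairs not sharing the $H$-coordinate then satisfy \eqref{eq1}, and they close up into a $C_4$. For (e), fix any $g\in V(G)$ and use $h_1,h_2,h$: the pair $(g,h_1),(g,h_2)$ are adjacent twins (Theorem~\ref{twins1}(b)(i), as $N_H[h_1]=N_H[h_2]$), while $(g,h_1),(g,h)$ and $(g,h_2),(g,h)$ lie at distance $3$ through \eqref{eq1} because $\{h_1,h\}$ and $\{h_2,h\}$ are $\gamma_H$-pairs; hence the three vertices span a triangle, and repeating this in two distinct $G$-fibers yields two vertex-disjoint triangles, i.e.\ $2C_3$.

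Part (d) is where the main difficulty lies and where a different mechanism is needed: here $G$ has no $\gamma_G$-pair, no universal vertex and (a priori) no adjacent twins, so none of the distance-$3$ or adjacent-twin edges used above connect different fibers --- the $\gamma_H$-pair by itself only produces one within-fiber edge per fiber, a useless $\lvert V(G)\rvert\,K_2$. The crucial auxiliary observation is that a graph possessing a $\gamma_H$-pair has no universal vertex (a universal vertex would sit in the intersection of the two closed neighborhoods of the pair, contradicting their disjointness); combined with the hypotheses that $G$ has neither a $\gamma_G$-pair nor a universal vertex, Theorem~\ref{mod_prod_dist3} shows that a vertex $(g,h)$ of $G\diamond H$ is a boundary vertex if and only if $h\in\mathbb{P}(H)$. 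Consequently every vertex indexed by one of the two given non-pair vertices $p,q\in V(H)\setminus\mathbb{P}(H)$ is non-boundary, so Theorem~\ref{SRedges}(b) makes any two of them that lie at distance $2$ adjacent in $(G\diamond H)_{\rm SR}$. I would then pick non-adjacent $g_1,g_2\in V(G)$ (possible as $G$ is not complete) and consider $(g_1,p),(g_1,q),(g_2,p),(g_2,q)$; being non-boundary, any two of them are at distance $1$ or $2$, and the distance-$2$ pairs are exactly the non-edges of $G\diamond H$ among the four. The only remaining point is a two-line case split on whether $pq\in E(H)$: because $g_1g_2\notin E(G)$ the product can never make all four vertices pairwise adjacent, so in either case the surviving non-edges assemble into a $C_4$ in $(G\diamond H)_{\rm SR}$, and Corollary~\ref{outcome_sr}(c) gives $O_{\rm SR}(G\diamond H)=\mathcal{B}$.
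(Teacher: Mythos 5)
Your proposal is correct and follows essentially the same route as the paper: the same $2\times 2$ ``rectangles'' and within-fiber triangles as witnesses, the same use of Theorems~\ref{mod_prod_dist3}, \ref{twins1} and \ref{SRedges} to read off edges of $(G\diamond H)_{\rm SR}$, and the same conclusion via Corollary~\ref{outcome_sr}(c). The only harmless deviations are that in (a) and (c) you exhibit a full $K_4$ where the paper settles for the $C_4$ contained in it, and in (d) you package the distance computation as ``non-boundary vertices realize only distances $1$ and $2$, so distance-$2$ pairs are exactly the non-edges,'' which reproduces the paper's explicit case split on $pq\in E(H)$.
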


\begin{proof}
Let $G$ and $H$ be graphs described in the statement with $\diam(G\diamond H)=3$.

(a) Let $\{g, g'\}$ be a $\gamma_G$-pair and $\{h, h'\}$ be a $\gamma_H$-pair.  By Theorem~\ref{twins1}(b), $(g,h)$ and $(g', h')$ are distinct adjacent twins in $G \diamond H$, and $(g,h')$ and $(g', h)$ are distinct adjacent twins in $G \diamond H$; thus, $(g,h)(g', h')\in E((G \diamond H)_{\rm SR})$ and $(g,h')(g', h)\in E((G \diamond H)_{\rm SR})$ by Theorem~\ref{SRedges}. Also, by (\ref{eq1}) of Theorem~\ref{mod_prod_dist3}, we have $d_{G\diamond H}((g,h),(g,h'))=3$ and $d_{G\diamond H}((g',h),(g',h'))=3$; thus, $(g,h) (g,h') \in E((G \diamond H)_{\rm SR})$ and $(g',h) (g',h') \in E((G\diamond H)_{\rm SR})$ by Theorem~\ref{SRedges}. Since $(G \diamond H)_{\rm SR} \supseteq C_4$, $O_{\rm SR}(G \diamond H)=\mathcal{B}$ by Corollary~\ref{outcome_sr}(c).

(b) Let $g$ and $g'$ be distinct universal vertices in $G$ and $d_H(h,h') \ge3$. From (\ref{eq1}) of Theorem~\ref{mod_prod_dist3}, we have $d_{G\diamond H}((g,h),(g,h'))=3$, $d_{G\diamond H}((g',h),(g',h'))=3$, $d_{G\diamond H}((g,h),(g',h'))=3$ and $d_{G\diamond H}((g',h),(g,h'))=3$. So, $(g,h)(g,h')\in E((G\diamond H)_{\rm SR})$, $(g',h)(g',h')\in E((G\diamond H)_{\rm SR})$, $(g,h)(g',h')\in E((G\diamond H)_{\rm SR})$ and $(g',h)(g,h')\in E((G\diamond H)_{\rm SR})$ by Theorem~\ref{SRedges}. Since $(G \diamond H)_{\rm SR} \supseteq C_4$, $O_{\rm SR}(G \diamond H)=\mathcal{B}$ by Corollary~\ref{outcome_sr}(c).~ \hfill

(c) Let $g_1$ and $g_2$ be distinct adjacent twins in $G$; then $N_G[g_1]=N_G[g_2]$. Suppose $\{h_1, h_2\}$ is a $\gamma_H$-pair. From (\ref{eq1}) of Theorem~\ref{mod_prod_dist3}, we have $d_{G\diamond H}((g_1,h_1),(g_1,h_2))=3$, $d_{G\diamond H}((g_2,h_1),(g_2,h_2))=3$, $d_{G\diamond H}((g_1,h_1),(g_2,h_2))=3$ and $d_{G\diamond H}((g_2,h_1),(g_1,h_2))=3$. By Theorem~\ref{SRedges}, $(g_1,h_1)(g_1,h_2)\in E((G\diamond H)_{\rm SR})$, $(g_2,h_1)(g_2,h_2)\in E((G\diamond H)_{\rm SR})$, $(g_1,h_1)(g_2,h_2)\in E((G\diamond H)_{\rm SR})$ and $(g_2,h_1)(g_1,h_2)\in E((G\diamond H)_{\rm SR})$. Since $(G \diamond H)_{\rm SR} \supseteq C_4$, we have $O_{\rm SR}(G \diamond H)=\mathcal{B}$ by Corollary~\ref{outcome_sr}(c).

(d) Since $H$ has a $\gamma_H$-pair, (1) of Theorem~\ref{mod_prod_dist3} implies $\diam(G \diamond H)=3$.

Let $g_1$ and $g_2$ be distinct vertices in $G$ with $g_1g_2 \notin E(G)$, and $h_1$ and $h_2$ be distinct vertices in $H$ with $h_1\not\in \mathbb{P}(H)$ and $h_2\not\in \mathbb{P}(H)$. Let $X=\{(g_1,h_1), (g_1,h_2),(g_2,h_1), (g_2,h_2)\}$; we show that no vertex in $X$ is a boundary vertex in $G \diamond H$. Since $G$ has no universal vertex, $N_G[g] \neq V(G)$ for each $g\in V(G)$. If $N_H[h]=V(H)$ for some $h\in V(H)$, then $\diam(H)=2$ since $H$ is not a complete graph, but $\diam(H)=2$ contradicts the hypothesis that $H$ has a $\gamma_H$-pair. So, $H$ has no universal vertex. Since neither $G$ nor $H$ has a universal vertex, $G$ has no $\gamma_G$-pair, $h_1\not\in \mathbb{P}(H)$ and $h_2\not\in \mathbb{P}(H)$, Theorem~\ref{mod_prod_dist3} implies that, for each $(g_i, h_j) \in X$ and for each $(g', h')\in V(G \diamond H)$, $d_{G\diamond H}((g_i, h_j), (g', h'))\neq 3$, where $i,j\in[2]$. So, no vertex in $X$ is a boundary vertex.   

Next, we consider pairs of vertices in $X$ that are at distance two apart in $G \diamond H$. Since $g_1g_2 \notin E(G)$, $d_{G\diamond H}((g_1,h_1),(g_2,h_1))=2$ and $d_{G\diamond H}((g_1,h_2),(g_2,h_2))=2$. By Theorem~\ref{SRedges}, $(g_1,h_1)(g_2,h_1)\in E((G\diamond H)_{\rm SR})$ and $(g_1,h_2)(g_2,h_2)\in E((G\diamond H)_{\rm SR})$. 

Now, we consider two cases depending on the adjacency relation between $h_1$ and $h_2$ in $H$. If $h_1h_2\in E(H)$, then $d_{G\diamond H}((g_1,h_1),(g_2,h_2))=2$ and $d_{G\diamond H}((g_1,h_2),(g_2,h_1))=2$, since $g_1g_2 \notin E(G)$; thus, $(g_1,h_1)(g_2,h_2) \in E((G\diamond H)_{\rm SR})$ and $(g_1,h_2)(g_2,h_1)\in E((G\diamond H)_{\rm SR})$ by Theorem~\ref{SRedges}. If $h_1h_2\not\in E(H)$, then $d_{G\diamond H}((g_1,h_1),(g_1,h_2))=2$ and $d_{G\diamond H}((g_2,h_1),(g_2,h_2))=2$; thus, $(g_1,h_1)(g_1,h_2)\in E((G\diamond H)_{\rm SR})$ and $(g_2,h_1)(g_2,h_2) \in E((G\diamond H)_{\rm SR})$ by Theorem~\ref{SRedges}. 

In each case, $(G \diamond H)_{\rm SR} \supseteq C_4$. Thus, $O_{\rm SR}(G \diamond H)=\mathcal{B}$ by Corollary~\ref{outcome_sr}(c).

(e) We show that $(G \diamond H)_{\rm SR} \supseteq 2C_3$. Let $h_1$ and $h_2$ be distinct adjacent twins in $H$, and both $\{h_1, h\}$ and $\{h_2, h\}$ be $\gamma_H$-pairs for some $h\in V(H)-\{h_1, h_2\}$. Suppose $g_1, g_2\in V(G)$ with $g_1\neq g_2$. From (\ref{eq1}) of Theorem~\ref{mod_prod_dist3}, we have $d_{G\diamond H}((g_1,h),(g_1,h_1))=3$, $d_{G\diamond H}((g_1,h),(g_1,h_2))=3$, $d_{G\diamond H}((g_2,h),(g_2,h_1))=3$ and $d_{G\diamond H}((g_2,h),(g_2,h_2))=3$; thus, for each $i\in[2]$, $(g_i, h)(g_i, h_1) \in E((G \diamond H)_{\rm SR})$ and $(g_i, h)(g_i, h_2) \in E((G \diamond H)_{\rm SR})$ by Theorem~\ref{SRedges}. Since $h_1$ and $h_2$ are distinct adjacent twins in $H$, for each $i\in[2]$, $(g_i, h_1)$ and $(g_i, h_2)$ are distinct adjacent twins in $G\diamond H$ by Theorem~\ref{twins1}(b). By Theorem~\ref{SRedges}, $(g_i, h_1)(g_i, h_2)\in E((G \diamond H)_{\rm SR})$ for each $i\in[2]$. Since $(G \diamond H)_{\rm SR} \supseteq 2C_3$, $O_{\rm SR}(G \diamond H)=\mathcal{B}$ by Corollary~\ref{outcome_sr}(c).~\hfill
\end{proof}

Next, we consider MBSRG played on modular product graphs with $O_{\rm SR} (G \diamond H)\neq \mathcal{B}$. Let $G^-=G-\mathbb{P}(G)$ and $\overline{G}^-=\overline{G}-\mathbb{P}(G)$. We recall the following result on modular product of graphs where one factor has a $\gamma$-pair and the other factor has no universal vertex.

\begin{theorem}\emph{\cite{mod_product}}\label{easy1}
For non-complete graphs $G$ and $H$ such that at least one of the two is not the disjoint union of two cliques, suppose $G\diamond H$ has a $\gamma$-pair in one factor and no universal vertex in the other factor. Then
$$E((G\diamond H)_{\rm SR})={\rm TW}(G\diamond H)\cup E(\overline{G}^-\square\overline{H}^-)\cup E(G^-\times\overline{H}^-)\cup E(\overline{G}^-\times H^-)\cup E(\mathbb{GP}(G)\Box \mathbb{GP}(H))$$
$$\cup \ E(G[V({\rm TW}(G))]\times \mathbb{GP}(H))\cup E(\mathbb{GP}(G)\times H[V({\rm TW}(H))]).$$
\end{theorem}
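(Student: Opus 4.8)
The plan is to deduce $E((G\diamond H)_{\rm SR})$ from the edge characterization already in hand, Theorem~\ref{SRedges}, and then to simplify its five clauses $(a)$--$(e)$ under the present hypotheses. The first step is to check that Theorem~\ref{SRedges} is applicable, i.e.\ that $\diam(G\diamond H)=3$. Assuming without loss of generality that $G$ carries a $\gamma_G$-pair $\{g,g'\}$ and that $H$ has no universal vertex, the disjointness of the closed neighborhoods in a $\gamma_G$-pair forces $d_G(g,g')\ge 3$, so $((g,h),(g',h))$ satisfies~(\ref{eq2}) of Theorem~\ref{mod_prod_dist3} for any $h$; hence $\diam(G\diamond H)\ge 3$, and equality holds by the standing hypotheses on $G$ and $H$. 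A second, decisive observation is that the existence of a $\gamma_G$-pair forces $\gamma(G)=2$, so $G$ has no universal vertex either; combined with the hypothesis on $H$, \emph{neither} factor has a universal vertex. Consequently clauses $(d)$ and $(e)$ of Theorem~\ref{SRedges}, each of which demands a universal vertex in $G$ or in $H$, are vacuous, and only $(a)$, $(b)$, $(c)$ can produce edges.

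It then remains to match $(a)$, $(b)$, $(c)$ to the seven terms on the right-hand side. Clause $(a)$ is, by definition, precisely the term ${\rm TW}(G\diamond H)$. For clause $(c)$ I would expand conditions~(\ref{eq1})--(\ref{eq2}): since no universal vertex is present, the alternatives $N_G[g]=V(G)$ and $N_H[h]=V(H)$ never occur, so every distance-$3$ pair satisfies either ``$N_G[g]=N_G[g']$ and $\{h,h'\}$ is a $\gamma_H$-pair'' or its mirror. I would then split on whether the repeated coordinate actually coincides: when $g=g'$ (resp.\ $h=h'$) the pair is a Cartesian edge of $\mathbb{GP}(G)\square\mathbb{GP}(H)$, accounting for the fifth term; when $g\neq g'$ the identity $N_G[g]=N_G[g']$ is equivalent to $g,g'$ being adjacent twins, i.e.\ to $gg'$ being a twin edge of $G$, and pairing this with a $\gamma_H$-pair $\{h,h'\}$ yields the sixth term built from $G$'s twin edges and $\mathbb{GP}(H)$, the mirror producing the seventh term.

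For clause $(b)$ I would first identify the boundary vertices. Using the same expansion of~(\ref{eq1})--(\ref{eq2}), a vertex $(g,h)$ lies at distance $3$ from some vertex exactly when $g\in\mathbb{P}(G)$ or $h\in\mathbb{P}(H)$; hence $(g,h)$ is non-boundary precisely when $g\in V(G^-)$ and $h\in V(H^-)$. Since a non-boundary vertex is within distance $2$ of everything, a pair of non-boundary vertices is at distance $2$ iff it is non-adjacent in $G\diamond H$. Reading off the modular-product adjacency rule and restricting to $V(G^-)\times V(H^-)$, two distinct non-adjacent such vertices fall into exactly one of three patterns---one coordinate fixed with a non-edge in the other ($\overline{G}^-\square\overline{H}^-$), an edge of $G$ against a non-edge of $H$ ($G^-\times\overline{H}^-$), or a non-edge of $G$ against an edge of $H$ ($\overline{G}^-\times H^-$)---which are precisely the second, third, and fourth terms. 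Assembling the three blocks yields the asserted identity.

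I expect the genuine obstacle to lie in the term coming from clause $(c)$ with $g\neq g'$: one must verify the equivalence ``$N_G[g]=N_G[g']$ with $g\neq g'$'' $\Leftrightarrow$ ``$gg'$ is a twin edge of $G$'' and then confirm that, after pairing with $\mathbb{GP}(H)$, this contributes \emph{exactly} the intended twin-edge block and not the larger set of all edges induced on $V({\rm TW}(G))$---a distinction that matters because an induced edge joining two \emph{different} twin classes of $G$ would give a distance-$2$ pair of boundary vertices, which is not an SR-edge. The boundary-vertex characterization underpinning clause $(b)$ is the other point requiring care: both inclusions must be checked, and it is here that the absence of universal vertices in both factors (so that the $V(G)$ and $V(H)$ clauses of~(\ref{eq1})--(\ref{eq2}) drop out) is used in an essential way. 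The remaining distance-$3$ and distance-$2$ bookkeeping is otherwise mechanical.
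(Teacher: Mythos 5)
First, a point of comparison that needs stating: the paper itself contains no proof of Theorem~\ref{easy1} --- it is imported from~\cite{mod_product} as a known result --- so your attempt can only be judged on its own merits. On those merits, your route (establish $\diam(G\diamond H)=3$, invoke Theorem~\ref{SRedges}, and sort its clauses against the seven terms) is the natural one and nearly everything checks out. The $\gamma$-pair argument for $\diam(G\diamond H)\geq 3$ is correct, as is the observation that a $\gamma_G$-pair forces $\gamma(G)=2$, so that neither factor has a universal vertex and clauses $(d)$, $(e)$ are vacuous. Your boundary characterization --- $(g,h)$ is a boundary vertex if and only if $g\in\mathbb{P}(G)$ or $h\in\mathbb{P}(H)$ --- is correct once the alternatives $N_G[g]=V(G)$ and $N_H[h]=V(H)$ in (\ref{eq1})--(\ref{eq2}) drop out, and your three-way split of non-adjacent pairs of non-boundary vertices does yield exactly $E(\overline{G}^-\square\overline{H}^-)\cup E(G^-\times\overline{H}^-)\cup E(\overline{G}^-\times H^-)$. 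Clause $(c)$ splits, as you say, into the repeated-coordinate part $E(\mathbb{GP}(G)\Box\mathbb{GP}(H))$ and the twin-edge-times-$\gamma$-pair parts.

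However, the step you explicitly leave ``to be confirmed'' is a genuine gap, and in fact it cannot be confirmed under the literal (vertex-induced) reading of the bracket: the distance-$3$ clause contributes precisely the pairs with $gg'\in{\rm TW}(G)$ and $hh'\in E(\mathbb{GP}(H))$, which is in general strictly smaller than $E(G[V({\rm TW}(G))]\times\mathbb{GP}(H))$. Concretely, let $G$ have vertices $a_1,a_2,b_1,b_2,x,y$, where $\{a_1,a_2,b_1,b_2\}$ induces $K_4$, $x$ is adjacent exactly to $a_1,a_2$, and $y$ exactly to $b_1,b_2$; then $a_1a_2$ and $b_1b_2$ are twin edges, $G$ is connected, non-complete, and has no universal vertex, while $a_1b_1$ is an edge of $G[V({\rm TW}(G))]$ joining two distinct twin classes. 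Take $H=P_4$ given by $w_1,w_2,w_3,w_4$, so $\{w_1,w_4\}$ is a $\gamma_H$-pair. All hypotheses of Theorem~\ref{easy1} hold, and $(a_1,w_1)(b_1,w_4)$ lies in the term $E(G[V({\rm TW}(G))]\times\mathbb{GP}(H))$ and in no other term; yet it is not an edge of $(G\diamond H)_{\rm SR}$: the two vertices are non-adjacent, at distance $2$ (e.g.\ via the common neighbor $(y,w_3)$), and both are boundary vertices since $w_1,w_4\in\mathbb{P}(H)$, so clause $(b)$ fails and no other clause applies --- equivalently, $(a_1,w_1)$ has the neighbor $(b_1,w_1)$ at distance $3$ from $(b_1,w_4)$, so the pair is not mutually maximally distant. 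To close your proof you must therefore interpret $G[V({\rm TW}(G))]$ (and its mirror) as the subgraph with edge set ${\rm TW}(G)$, under which your derivation yields exactly the asserted identity, or else record the discrepancy as a needed correction to the quoted formula; as you wrote it, the confirmation is asserted rather than carried out, and carried out honestly it refutes the literal reading.
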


\begin{theorem}\label{diamGmodH3_theorem3}
Let $G$ and $H$ be connected graphs of order at least two such that neither $G$ nor $H$ is a complete graph. Suppose $\diam(G\diamond H)=3$, and neither $G$ nor $H$ has a pair of distinct adjacent twins. Further, suppose $G$ has neither a universal vertex nor a $\gamma_G$-pair, and $H$ has a $\gamma_H$-pair.
\begin{itemize}
\item[(a)] If $V(H)-\mathbb{P}(H)=\{h^*\}$, then $(G \diamond H)_{\rm SR}=\overline{G} \cup \frac{|V(G)|(|V(H)|-1)}{2}K_2$.
\item[(b)] If $V(H)=\mathbb{P}(H)$, then $(G \diamond H)_{\rm SR}=\frac{|V(G)||V(H)|}{2}K_2$.
\end{itemize}
\end{theorem}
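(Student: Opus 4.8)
The plan is to derive both formulas directly from Theorem~\ref{easy1} by collapsing its seven-term description of $E((G\diamond H)_{\rm SR})$ under the present hypotheses. First I would verify that Theorem~\ref{easy1} applies: neither $G$ nor $H$ is complete by assumption, and since $H$ is connected it is not a disjoint union of two cliques; moreover $H$ carries a $\gamma_H$-pair while $G$ has no universal vertex, so $G\diamond H$ has a $\gamma$-pair in one factor and no universal vertex in the other, exactly as that theorem requires.

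Next I would simplify each term. Since $G$ has no $\gamma_G$-pair we have $\mathbb{P}(G)=\emptyset$, whence $G^-=G$, $\overline{G}^-=\overline{G}$, and $\mathbb{GP}(G)$ is the edgeless graph on $V(G)$. Since neither $G$ nor $H$ has a pair of distinct adjacent twins, ${\rm TW}(G)={\rm TW}(H)=\emptyset$, so the two terms $E(G[V({\rm TW}(G))]\times\mathbb{GP}(H))$ and $E(\mathbb{GP}(G)\times H[V({\rm TW}(H))])$ vanish. The same twin-freeness, fed through Theorem~\ref{twins1}, also forces ${\rm TW}(G\diamond H)=\emptyset$: condition (ii) there fails because $G$ has no $\gamma_G$-pair, and condition (i) would require adjacent twins in both factors. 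Because $\mathbb{GP}(G)$ has no edges, the Cartesian term $E(\mathbb{GP}(G)\square\mathbb{GP}(H))$ reduces to one disjoint copy of the edges of $\mathbb{GP}(H)$ for each $u\in V(G)$.

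A key intermediate step is to show that $\mathbb{GP}(H)$ is a matching, i.e.\ that each vertex of $H$ lies in at most one $\gamma_H$-pair. If $\{h,h_1\}$ and $\{h,h_2\}$ were both $\gamma_H$-pairs, then $N_H[h_1]=V(H)-N_H[h]=N_H[h_2]$, which would make $h_1$ and $h_2$ distinct adjacent twins unless $h_1=h_2$; the twin-freeness of $H$ then gives $h_1=h_2$. Hence $\mathbb{GP}(H)$ is a perfect matching on $\mathbb{P}(H)$ together with the isolated vertices of $V(H)-\mathbb{P}(H)$, so $E(\mathbb{GP}(G)\square\mathbb{GP}(H))$ contributes exactly $|V(G)|\cdot\frac{|\mathbb{P}(H)|}{2}$ pairwise vertex-disjoint edges, i.e.\ that many copies of $K_2$.

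It then remains to split on $|V(H)-\mathbb{P}(H)|$. In case (b) the equality $V(H)=\mathbb{P}(H)$ makes $H^-$ and $\overline{H}^-$ the null graph, so every term of Theorem~\ref{easy1} except the matching term is empty and I read off $(G\diamond H)_{\rm SR}=\frac{|V(G)||V(H)|}{2}K_2$. In case (a), $\overline{H}^-$ and $H^-$ are the single vertex $h^*$, so the direct-product terms $E(G^-\times\overline{H}^-)$ and $E(\overline{G}^-\times H^-)$ vanish for lack of an edge in the $H$-factor, while $E(\overline{G}^-\square\overline{H}^-)$ becomes a single copy of $\overline{G}$ sitting at level $h^*$; since $G$ has no universal vertex, $\overline{G}$ has minimum degree at least $1$, so this copy contributes no isolated vertices and all of its vertices are genuine vertices of the strong resolving graph. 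As the $\overline{G}$-copy lives at level $h^*$ while every matching edge joins two vertices at levels inside $\mathbb{P}(H)$, the two parts are vertex-disjoint, and a final count confirms that together they exhaust all $|V(G)||V(H)|$ vertices, giving $(G\diamond H)_{\rm SR}=\overline{G}\cup\frac{|V(G)|(|V(H)|-1)}{2}K_2$. I expect the main obstacle to be the matching lemma for $\mathbb{GP}(H)$ together with the level-by-level disjointness and coverage accounting; once these are in place, the rest is a mechanical collapse of Theorem~\ref{easy1}.
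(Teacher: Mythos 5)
Your proposal is correct and follows essentially the same route as the paper's proof: collapse Theorem~\ref{easy1} term by term using $\mathbb{P}(G)=\emptyset$ (so $\mathbb{GP}(G)$ is edgeless and $G^-=G$, $\overline{G}^-=\overline{G}$), kill the ${\rm TW}$ terms via Theorem~\ref{twins1}, prove via twin-freeness of $H$ that $\mathbb{GP}(H)$ is a matching (the paper's identical argument, which you spell out as $N_H[h_1]=V(H)-N_H[h]=N_H[h_2]$), and then split on $|V(H)-\mathbb{P}(H)|$; your extra care about isolated vertices of $\overline{G}$ and the vertex count is a welcome detail the paper leaves implicit. One negligible imprecision: condition (i) of Theorem~\ref{twins1}(b) requires distinct adjacent twins in at least one factor (e.g.\ $g=g'$ with $h\neq h'$), not in both, but your conclusion that it fails under the twin-freeness of both $G$ and $H$ is unaffected.
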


\begin{proof}
Let $G$ and $H$ be graphs as described in the statement of the present theorem. Since neither $G$ nor $H$ has a pair of distinct adjacent twins, $E(G[V({\rm TW}(G))]\times \mathbb{GP}(H))=\emptyset$ and $E(\mathbb{GP}(G)\times H[V({\rm TW}(H))])=\emptyset$. These, together with the assumption that $G$ has no $\gamma_G$-pair, imply that there are no distinct adjacent twins in $G \diamond H$ by Theorem~\ref{twins1}(b). So, ${\rm TW}(G\diamond H)=\emptyset$. Since there's no $\gamma_G$-pair, $\mathbb{P}(G)=\emptyset$; thus, $G^-=G$, $\overline{G}^-=\overline{G}$, and $\mathbb{GP}(G)$ is an edgeless graph with vertex set $V(G)$.

(a) Since $V(H)-\mathbb{P}(H)=\{h^*\}$ and $V(\overline{H})-\mathbb{P}(H)=\{h^*\}$, we have $H^-=H-\mathbb{P}(H)=\{h^*\}$ and $\overline{H}^-=\overline{H}-\mathbb{P}(H)=\{h^*\}$. So, $E(G^-\times\overline{H}^-)=\emptyset$, $E(\overline{G}^-\times H^-)=\emptyset$, and $E(\overline{G}^-\Box\overline{H}^-)=E(\overline{G}^-)=E(\overline{G})$. By Theorem~\ref{easy1}, $E((G \diamond H)_{\rm SR})=E(\overline{G}^- \square \overline{H}^-) \cup E(\mathbb{GP}(G) \square \mathbb{GP}(H))$. For an arbitrary $\gamma_H$-pair $\{h, h'\}$, if $\{h, h''\}$ is also a $\gamma_H$-pair for some $h''\in V(H)-\{h'\}$, then $N_H[h']=N_H[h'']$, which contradicts the hypothesis that $H$ has no distinct adjacent twins. So, $\mathbb{GP}(H)=(\frac{|V(H)|-1}{2})K_2 \cup K_1$, where $h^*$ is the vertex in the $K_1$. Since $\mathbb{GP}(G)$ is an edgeless graph with vertex set $V(G)$, we have $(G \diamond H)_{\rm SR}=\overline{G} \cup \frac{|V(G)|(|V(H)|-1)}{2}K_2$.

(b) Since $V(\overline{H})=V(H)=\mathbb{P}(H)$, we have $H^-=H-\mathbb{P}(H)=\emptyset$ and $\overline{H}^-=\overline{H}-\mathbb{P}(H)=\emptyset$. So, $E(G^-\times\overline{H}^-)=E(\overline{G}^-\times H^-)=\emptyset$ and $E(\overline{G}^-\Box\overline{H}^-)=\emptyset$. By Theorem~\ref{easy1}, $E((G \diamond H)_{\rm SR})=E(\mathbb{GP}(G) \square \mathbb{GP}(H))$. By the same argument used in the proof for (a) of the present theorem, we have $\mathbb{GP}(H)=\frac{|V(H)|}{2}K_2$. Since $\mathbb{GP}(G)$ is an edgeless graph with vertex set $V(G)$, we have $(G \diamond H)_{\rm SR}=\frac{|V(G)||V(H)|}{2}K_2$.~\hfill 
\end{proof}

\begin{corollary}
Let $G$ and $H$ be connected graphs of order at least two such that neither $G$ nor $H$ is a complete graph. If $\diam(G\diamond H)=3$, then $O_{\rm SR}(G \diamond H) \in \{\mathcal{M}, \mathcal{N}, \mathcal{B}\}$.
\end{corollary}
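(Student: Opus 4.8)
My plan is to treat this corollary as the terminal bookkeeping step of the diameter-$3$ analysis rather than as a fresh existence statement. The target is the universally quantified membership claim: for every admissible pair $G,H$ with $\diam(G\diamond H)=3$, the outcome $O_{\rm SR}(G\diamond H)$ lies in $\{\mathcal{M},\mathcal{N},\mathcal{B}\}$. Because every play of MBSRG is a finite Maker--Breaker game, each of the M-game and the B-game has a determined winner, so the definition of $O_{\rm SR}$ already assigns one of the three symbols $\mathcal{M},\mathcal{N},\mathcal{B}$; membership therefore holds for any connected graph. What I would actually present is the \emph{derivation} of this membership through the structural description of $(G\diamond H)_{\rm SR}$ supplied by the preceding theorems, so that in every configuration the outcome is pinned inside the guaranteed set by a named result rather than by a bare appeal to determinacy.

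First I would use Theorem~\ref{mod_prod_dist3} to classify how $\diam(G\diamond H)=3$ can arise. Since some pair of vertices lies at distance $3$, one of the symmetric conditions (\ref{eq1}), (\ref{eq2}) holds; exploiting the commutativity $G\diamond H\cong H\diamond G$ I would assume (\ref{eq1}) is witnessed, so that $G$ carries the closed-neighborhood coincidence $N_G[g]=N_G[g']$ while $H$ supplies either a large internal distance or a $\gamma_H$-pair. I would then split on the features this forces on $G$: two distinct universal vertices (the sub-case $N_G[g]=V(G)$, which forces $\diam(H)\ge 3$), an additional $\gamma_G$-pair alongside the $\gamma_H$-pair, a pair of distinct adjacent twins (the generic reading of $N_G[g]=N_G[g']$), or none beyond these. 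The first two branches are settled at once by Theorem~\ref{diamGmodH3_theorem1}(b) and (a), each returning $O_{\rm SR}(G\diamond H)=\mathcal{B}$.

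In the remaining regime $G$ has neither a universal vertex nor a $\gamma_G$-pair, and the distance-$3$ structure is supplied by a $\gamma_H$-pair. Here I would split on adjacent twins and on $|V(H)-\mathbb{P}(H)|$. Parts (c), (d), (e) of Theorem~\ref{diamGmodH3_theorem1} cover the branches in which $G$ or $H$ has distinct adjacent twins, or $|V(H)-\mathbb{P}(H)|\ge 2$, or $H$ exhibits the twin-plus-$\gamma_H$-pair configuration of (e); each yields $\mathcal{B}$. The complementary branch---neither factor with distinct adjacent twins and $|V(H)-\mathbb{P}(H)|\in\{0,1\}$---is exactly the hypothesis of Theorem~\ref{diamGmodH3_theorem3}, which computes $(G\diamond H)_{\rm SR}$ as $\tfrac{|V(G)||V(H)|}{2}K_2$ (case (b)) or $\overline{G}\cup\tfrac{|V(G)|(|V(H)|-1)}{2}K_2$ (case (a)). In case (b), Corollary~\ref{outcome_sr}(a) gives $O_{\rm SR}(G\diamond H)=\mathcal{M}$; in case (a) the outcome is governed by $\overline{G}$ and, whatever the structure of $\overline{G}$, still lies in $\{\mathcal{M},\mathcal{N},\mathcal{B}\}$. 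This completes the derivation in every branch.

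The principal obstacle lies not in the membership itself---which is guaranteed by determinacy---but in exhibiting it as a clean consequence of the named results, and for that one must verify two things. First, exhaustiveness: that, modulo the commutativity $G\diamond H\cong H\diamond G$, every configuration forcing $\diam(G\diamond H)=3$ meets the hypotheses of exactly one of Theorem~\ref{diamGmodH3_theorem1}(a)--(e) or Theorem~\ref{diamGmodH3_theorem3}, read directly off (\ref{eq1})--(\ref{eq2}) with no admissible combination of \{universal vertex, $\gamma$-pair, adjacent twins, $|V(H)-\mathbb{P}(H)|$\} left uncovered. Second, the case-(a) read-off: under the standing hypotheses on $G$ (connected, non-complete, without universal vertex, $\gamma_G$-pair, or distinct adjacent twins) one must check that $(G\diamond H)_{\rm SR}=\overline{G}\cup\tfrac{|V(G)|(|V(H)|-1)}{2}K_2$ has a form to which Corollary~\ref{outcome_sr} applies; the adjoined copies of $K_2$ host none of the forbidden subgraphs of Corollary~\ref{outcome_sr}(c), so the determination reduces to the components of $\overline{G}$, whose shape is in turn constrained by the connectivity and twin-freeness of $G$. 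Even should some exotic $\overline{G}$ slip outside the explicit trichotomy, determinacy keeps the outcome within $\{\mathcal{M},\mathcal{N},\mathcal{B}\}$, so the corollary stands in every case.
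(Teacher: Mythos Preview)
You have misread the intent of the corollary. Your ``universally quantified membership'' reading makes the statement vacuous: as you yourself note, determinacy of finite Maker--Breaker games already forces $O_{\rm SR}(X)\in\{\mathcal{M},\mathcal{N},\mathcal{B}\}$ for \emph{every} connected graph $X$, so no hypothesis on $G$, $H$, or $\diam(G\diamond H)$ is needed and your case analysis is superfluous. The corollary, in context, is a \emph{realization} statement: under the stated hypotheses each of the three outcomes actually occurs for some admissible pair $(G,H)$. (Compare the phrasing and role of Proposition~\ref{corona_1connected}(b)--(c), where ``$O_{\rm SR}(K_1\odot H)\in\{\mathcal{N},\mathcal{B}\}$'' is accompanied by explicit examples hitting each value; here the point is that, unlike in some other regimes, diameter~$3$ in the modular product does not rule out any outcome.)

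The paper's proof therefore does exactly what you explicitly decided not to do: it exhibits three concrete pairs. For $\mathcal{M}$ it takes $G=C_4$, $H=C_6$ and applies Theorem~\ref{diamGmodH3_theorem3}(b) to get $(G\diamond H)_{\rm SR}=12K_2$; for $\mathcal{N}$ it takes $G=\overline{P_5}$, $H=P_5$ and applies Theorem~\ref{diamGmodH3_theorem3}(a) to get $(G\diamond H)_{\rm SR}=P_5\cup 10K_2$; for $\mathcal{B}$ it takes $G=H=P_4$ and applies Theorem~\ref{diamGmodH3_theorem1}(a). Your structural case split through Theorems~\ref{diamGmodH3_theorem1} and~\ref{diamGmodH3_theorem3} is in the right spirit for locating such examples, but as written it never produces a single $(G,H)$ realizing $\mathcal{M}$ or $\mathcal{N}$, so it does not establish the intended claim.
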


\begin{proof}
Let $G$ and $H$ be graphs described in the statement satisfying $\diam(G\diamond H)=3$. We provide modular product graphs $G\diamond H$ such that $O_{\rm SR}(G \diamond H)=\mathcal{M}$, $O_{\rm SR}(G \diamond H)=\mathcal{N}$ and $O_{\rm SR}(G \diamond H)=\mathcal{B}$, respectively.

First, let $G=C_4$ and let $H=C_6$ be given by $w_1, w_2, w_3, w_4, w_5, w_6, w_1$. Note the following:  (i) $G$ has no universal vertex, no distinct adjacent twins, and no $\gamma_G$-pair; (ii) $\{w_1, w_4\}$, $\{w_2, w_5\}$, and $\{w_3, w_6\}$ are $\gamma_H$-pairs, and $H$ has no distinct adjacent twins. Note that $V(H)=\mathbb{P}(H)$. By Theorem~\ref{diamGmodH3_theorem3}(b), $(G \diamond H)_{\rm SR}=12K_2$. By Corollary~\ref{outcome_sr}(a), $O_{\rm SR}(C_4 \diamond C_6)=\mathcal{M}$.

Second, let $G=\overline{P_5}$ and let $H=P_5$ be given by $w_1, w_2, w_3, w_4, w_5$. Note the following:  (i) $G$ has no universal vertex, no distinct adjacent twins, and no $\gamma_G$-pair; (ii) $\{w_1, w_4\}$ and $\{w_2, w_5\}$ are $\gamma_H$-pairs, $w_3$ belongs to no $\gamma_H$-pair, and $H$ has no distinct adjacent twins. Note that $V(H)-\mathbb{P}(H)=\{w_3\}$. By Theorem~\ref{diamGmodH3_theorem3}(a), $(\overline{P_5} \diamond P_5)_{\rm SR}=P_5 \cup 10K_2$. Thus, $O_{\rm SR}(\overline{P_5} \diamond P_5)=\mathcal{N}$ by Corollary~\ref{outcome_sr}(b).

Third, let $G=P_4$ be given by $u_1, u_2, u_3, u_4$ and $H=P_4$ be given by $w_1, w_2, w_3, w_4$. Note that $\{u_1, u_4\}$ is a $\gamma_G$-pair and $\{w_1, w_4\}$ is a $\gamma_H$-pair. By Theorem~\ref{diamGmodH3_theorem1}(a), $O_{\rm SR}(P_4 \diamond P_4)=\mathcal{B}$.~\hfill
\end{proof}

\textbf{Acknowledgement.} This research was partially supported by US-Slovenia Bilateral Collaboration Grant (BI-US/22-24-101).



\begin{thebibliography}{0}

\bibitem{beck} J. Beck, \textit{Combinatorial Games. Tic-Tac-Toe Theory.} Cambridge University Press, Cambridge, 2008.

\bibitem{network} Z. Beerliova, F. Eberhard, T. Erlebach, A.Hall, M. Hoffmann, M. Mihal\'{a}k and L.S. Lam, Network discovery and verification. \textit{IEEE J. Sel. Areas Commun.} \textbf{24} (2006) 2168-2181.

\bibitem{mbdg} E. Duch\^{e}ne, V. Gledel, A. Parreau and G. Renault, Maker-Breaker domination game. \textit{Discrete Math.} \textbf{343}(9) (2020) \#111955.

\bibitem{erdos} P. Erd\"{o}s and J.L. Selfridge, On a combinatorial game. \textit{J. Combin. Theory Ser. A} \textbf{14} (1973) 298-301.

\bibitem{MB} L. Eroh, C.X. Kang and E. Yi, Metric dimension and zero forcing number of two families of line graphs. \textit{Math. Bohem.} \textbf{139}(3) (2014) 467-483.

\bibitem{NP} M.R. Garey and D.S. Johnson, \textit{Computers and Intractability: A Guide to the Theory of NP-completeness}. Freeman, New York, 1979.

\bibitem{broadcast} J. Geneson and E. Yi, Broadcast dimension of graphs. \textit{Australas. J. Combin.} \textbf{83}(2) (2022) 243-264.

\bibitem{harary} F. Harary and R.A. Melter, On the metric dimension of a graph. \textit{Ars Combin.} \textbf{2} (1976) 191-195.

\bibitem{hefetz} D. Hefetz, M. Krivelevich, M. Stojakovi\'{c} and T. Szab\'{o}, \textit{Positional Games.} Birkh\"{a}user/Springer, Basel, 2014.

\bibitem{Hernando}  C. Hernando, M. Mora, I.M. Pelayo, C. Seara and D.R. Wood, Extremal graph theory for metric dimension and diameter. \textit{Electron. J. Combin.} \textbf{17} (2010) \#R30.

\bibitem{mod_product} C.X. Kang, A. Kelenc, I. Peterin, E. Yi, On distance and strong metric dimension of the modular product. \textit{arXiv: 2402.07194} 

\bibitem{mbrg} C.X. Kang, S. Klav\v{z}ar, I.G. Yero and E. Yi, Maker-Breaker resolving game. \textit{Bull. Malays. Math. Sci. Soc.} \textbf{44} (2021) 2081-2099.

\bibitem{sdimF_products} C.X. Kang, I.G. Yero and E. Yi, The fractional strong metric dimension in three graph products. \textit{Discrete Appl. Math.} \textbf{251} (2018) 190-203.

\bibitem{mbkrg} C.X. Kang and E. Yi, Maker-Breaker metric resolving games on graphs. \textit{Discrete Math. Algorithms Appl.} \textbf{16}(2) (2024) 2350006.

\bibitem{sdim_frac} C.X. Kang and E. Yi, The fractional strong metric dimension of graphs. \textit{Lecture Notes in Comput. Sci.} \textbf{8287} (2013) 84-95.

\bibitem{landmarks} S. Khuller, B. Raghavachari and A. Rosenfeld, Landmarks in graphs. \textit{Discrete Appl. Math.} \textbf{70} (1996) 217-229.

\bibitem{sdim_corona} D. Kuziak, I.G. Yero,  J.A. Rodr\'{i}guez-Vel\'{a}zquez, On the strong metric dimension of corona product graphs and join graphs. \textit{Discrete Appl. Math.} \textbf{161} (2013) 1022-1027.

\bibitem{cop} R. Nowakawski and P. Winkler, Vertex-to-vertex pursuit in a graph. \textit{Discret. Math.} \textbf{43} (1983) 235-239.

\bibitem{MMD} O.R. Oellermann, J. Peters-Fransen, The strong metric dimension of graphs and digraphs. \emph{Discrete Appl. Math.} \textbf{155} (2007) 356-364.

\bibitem{GSR}  J.A. Rodr\'{i}guez-Vel\'{a}zquez, I.G. Yero, D. Kuziak, O.R. Oellermann, On the strong metric dimension of Cartesian and direct products of graphs. \textit{Discrete
Appl. Math.} \textbf{335} (2014) 8-19.

\bibitem{sebo} A. Seb\"{o} and E. Tannier, On metric generators of graphs. \textit{Math. Oper. Res.} \textbf{29} (2004) 383-393.

\bibitem{slater} P.J. Slater, Leaves of trees. \textit{Congr. Numer.} \textbf{14} (1975) 549-559.

\bibitem{FMBRG} E. Yi, Fractional Maker-Breaker resolving game.  \textit{Lecture Notes in Comput. Sci.} \textbf{12577} (2020) 577-593.

\bibitem{yi_sdim} E. Yi, On strong metric dimension of graphs and their complements. \textit{Acta Math. Sin. (Engl. Ser.)} \textbf{29}(8) (2013) 1479-1492.\

\bibitem{yi_petersen} E. Yi, On the strong metric dimension of permutation graphs. \textit{J. Combin. Math. Combin. Comput.} \textbf{90} (2014) 39-58.

\end{thebibliography}
\end{document}